\documentclass[a4paper,fleqn]{cas-sc}

\usepackage[numbers]{natbib}
\usepackage{subcaption}
\usepackage{amsthm}
\usepackage{amsmath}
\usepackage{physics}
\usepackage{float}
\usepackage{graphicx}
\usepackage{ulem}
\usepackage{placeins}
\usepackage{algorithm}
\usepackage{algpseudocode}
\usepackage{tikz-cd}
\usepackage{tikz}
\usetikzlibrary{arrows.meta, positioning, calc}
\usepackage{subcaption} % works well with Elsevier templates

\theoremstyle{definition}
\newtheorem{definition}{Definition}

\def\tsc#1{\csdef{#1}{\textsc{\lowercase{#1}}\xspace}}
\tsc{WGM}
\tsc{QE}
\newtheorem{theorem}{Theorem}[section]

\usepackage{xcolor}

\begin{document}
\let\WriteBookmarks\relax
\def\floatpagepagefraction{1}
\def\textpagefraction{.001}

% Short title
\shorttitle{Weighted Upwind Vector Kinetic Method}    

% Short author
\shortauthors{Michael W. Brown, Jehanzeb Chaudhry, John N. Shadid}  

% Main title of the paper
\title [mode = title]{A Weighted Upwind Vector Kinetic Lattice Boltzmann Method For Hyperbolic Conservation Laws}  

% Title footnote mark
% eg: \tnotemark[1]
%\tnotemark[1] 

% Title footnote 1.
% eg: \tnotetext[1]{Title footnote text}
%\tnotetext[1]{} 

% First author
%
% Options: Use if required
% eg: \author[1,3]{Author Name}[type=editor,
%       style=chinese,
%       auid=000,
%       bioid=1,
%       prefix=Sir,
%       orcid=0000-0000-0000-0000,
%       facebook=<facebook id>,
%       twitter=<twitter id>,
%       linkedin=<linkedin id>,
%       gplus=<gplus id>]

\author[1,2]{Michael W. Brown}%[<options>]

% Footnote of the first author
\fnmark[1]

% Email id of the first author
\ead{mwbrow@sandia.gov}

% URL of the first author
%\ead[url]{}

% Credit authorship
% eg: \credit{Conceptualization of this study, Methodology, Software}
%\credit{}

% Address/affiliation
\affiliation[1]{organization={Mathematics and Statistics Department, The University of New Mexico},
            addressline={311 Terrace Street NE}, 
            city={Albuquerque},
%          citysep={}, % Uncomment if no comma needed between city and postcode
            postcode={87106}, 
            state={NM},
            country={USA}}

\author[1]{Jehanzeb Chaudhry}%[<options>]

% Footnote of the second author
\fnmark[2]

% Email id of the second author
\ead{jehanzeb@unm.edu}

% URL of the second author
%\ead[url]{}

% Credit authorship
%\credit{}

\author[1,2]{John N. Shadid}%[<options>]

% Footnote of the third author
\fnmark[3]

% Corresponding author indication
\cormark[1]

% Email id of the third author
\ead{jnshadi@sandia.gov}

% URL of the second author
%\ead[url]{}

% Credit authorship
%\credit{}

% Address/affiliation
\affiliation[2]{organization={Computational and Applied Math Department, Sandia National Laboratories},
            addressline={MSCO1 115}, 
            city={Albuquerque},
%          citysep={}, % Uncomment if no comma needed between city and postcode
            postcode={87185}, 
            state={NM},
            country={USA}}
% Corresponding author text

% Footnote text
%\fntext[1]{}

% For a title note without a number/mark
\begin{abstract}
Vector kinetic lattice Boltzmann (VKLB) methods have recently emerged as a promising framework for solving hyperbolic partial differential equation (PDE) systems. VKLB discretizes Boltzmann-type equations using a discrete set of lattice velocities, enforces discrete moment constraints, and carefully defines equilibrium distribution functions. In this work, we introduce novel upwinded equilibrium distribution functions constructed from conservation variables and numerical fluxes derived via continuous flux vector splitting based on the eigen-decomposition of the flux Jacobian. This formulation enables the weighted-upwind VKLB equilibrium to be applied broadly to general hyperbolic systems. The method is  verified on a set of challenging hyperbolic systems that includes the shallow water, Euler and ideal magnetohydrodynamics (MHD) equations. The proposed method demonstrates improved stability, reduced error norms, and sharper shock resolution across increasingly complex verification and benchmark problems.
\end{abstract}

% Keywords
% Each keyword is seperated by \sep
\begin{keywords}
 \sep lattice Boltzmann
 \sep flux splitting methods
 \sep conservations laws
 \sep vector kinetic methods
\end{keywords}

\maketitle

\section{Introduction } 
Numerous important physical systems of interest in science and technology are modeled by conservation law PDE systems, and consequently such equations occupy a central role in computational physics and mathematics. Examples include chemical species transport, shallow water equations, linear and nonlinear elasticity for solid dynamics, the Euler equations of gas dynamics, and magnetohydrodynamics (MHD) plasma models. 

Successful numerical methods for these systems are designed to preserve the conservative structure of such PDEs while also providing stable and accurate techniques for capturing the formation of shocks, rarefactions, contact discontinuities, and steep unresolved internal and boundary layers \cite{Leveqe_Finite_Volume_Book, Toro_2009_Riemann_Solvers, Shu_Wang_Weno_Schemes, Cockburn_Discontinuous_Galerkin_Method,ern2004theory,kuzmin2012flux}. 

The finite volume method and the finite element method are two widely used traditional numerical methods for hyperbolic PDEs.  Finite volume methods are natural for conservation laws and are constructed directly from cell interface fluxes and therefore enforce conservation within the computational domain by design \cite{Leveqe_Finite_Volume_Book, Toro_2009_Riemann_Solvers, Leer_1979_Second_Order}. Properly defined numerical fluxes lead to robust solvers for hyperbolic problems, as seen through the development of approximate Riemann solvers \cite{ROE_Approx_Riemann_Solver}, flux splitting strategies \cite{Steger_1981_Euler_Splitting, Borah_2016_Ideal_MHD_Flux_Splitting, Toro_2022_Shallow_Water_Flux_Splitting}, and high resolution reconstruction methods such as ENO and WENO schemes \cite{Shu_Wang_Weno_Schemes, Shu_eno_schemes}. Discontinuous Galerkin finite element methods provide another powerful framework, combining the flexibility of finite element approximations with element wise conservation and high order accuracy \cite{Cockburn_Discontinuous_Galerkin_Method}. Fintie element methods based on invariant domain preserving (IDP) methods with graph-based numerical viscosity techniques and algebraic flux correction (AFC) methods also offer a systematic approach to enforce monotonicity and positivity preservation, and nonlinear stability properties in finite element methods. \cite{guermond2018second,kuzmin2012flux,kuzmin2020monolithic}.

Recently, lattice Boltzmann (LB) schemes are receiving increasing attention as an additional framework for numerically approximating systems of conservation laws and general governing PDE balance law systems. Reasons include low dispersion / dissipation errors, inherent parallelism and simple data access / communication patterns allowing vectorization on accelerators for the streaming (advection) step, exploitation of data locality in the collisional step, and extensibility to models for complex multiphysics systems with the non-linear behavior confined to the local collision operator.  A traditional lattice Boltzmann (LB) method evolves scalar particle distribution functions on a uniform mesh with a set of discrete lattice velocities, and macroscopic fluid quantities are recovered through discrete moments of such distribution functions \cite{Succi_LBM_2001, chen_shiyi_doolan_LB_fluid_flow, Y_H_Qian_1992}. In the context of the Navier-Stokes (NS) system, a traditional scalar LB method is used to recover the NS system in a low Mach number limit through appropriately constructed polynomial equilibrium distribution functions \cite{Kruger_2017_LB_Graduate_Book}. 
%The stream and collide algorithm of the LB method is attractive because of the local nonlinear collision operator, exact %advection operator, and suitability for highly parallel computation. 
Additionally, the LB framework has been extended to include models for multiphase and multi-component flows \cite{Xiaowen_chen_multiphase_flows}, thermal flows \cite{He_Chen_LB_Thermal_Model}, and formulations for magnetohydrodynamics \cite{Dellar_LB_MHD}. The traditional scalar algorithm and the various model extensions demonstrate the extensibility of the lattice Boltzmann framework. 

Vector Kinetic Lattice Boltzmann (VKLB) schemes build on the LB framework and are tailored for the solution of conservation laws~\cite{Anandan_2024_VKLB_Upwinding_Source_Term}. The vector kinetic models use a finite set of discrete velocity vectors together with vector valued  distribution functions so that discrete moments recover the conserved variables and physical fluxes of the target hyperbolic system. VKLB preserves the stream and collide structure of LB methods while approximating systems such as shallow water flow, compressible hydrodynamics, and ideal MHD. The VKLB framework for hyperbolic systems is developed from \cite{Natalini_Multi_Dimensional_Systems_Discrete_Systems}, with corresponding discrete moment structures, Chapman Enskog analysis, and a subcharacteristic condition \cite{Anandan_2024_VKLB_Upwinding_Source_Term, baty2023robust, Wissocq_2024_Positive_Preserving_VKLB}. Recent work has further extended VKLB schemes to include source terms, upwinding, physical diffusion, and MHD \cite{Wissocq_2024_Positive_Preserving_VKLB,Wissoq_2024_VKLB_Diffusion, Anandan_2024_VKLB_Upwinding_Source_Term,baty2023robust}. There has also been progress in establishing important properties such as positivity preservation, entropy-compatible behavior, consistency with the macroscopic system, and total variation boundedness \cite{Wissocq_2024_Positive_Preserving_VKLB,Anandan_2024_VKLB_Upwinding_Source_Term, boolakee_geier_2024_linear_elastodynamiocs, guillon2024stabilityanalysisvectoriallatticeboltzmann, bellotti2025consistencystabilityboundaryconditions, dubois2014simulationstrongnonlinearwaves, aregbadriollet2025equilibriumboundaryconditionsvectorial}. Unlike LB schemes, in the context of hydrodynamics the VKLB framework is not restricted to the low-Mach-number limit \cite{baty2023robust}.

The purpose of this work is to introduce a weighted upwind vector kinetic lattice Boltzmann (WU-VKLB) scheme constructed from  the flux splitting distribution functions developed in \cite{Anandan_2024_VKLB_Upwinding_Source_Term, Natalini_Multi_Dimensional_Systems_Discrete_Systems}. Rather than constructing a flux splitting tailored to one specific system, we appropriate a flux splitting from the finite volume framework based solely on the eigenstructure of the underlying hyperbolic system. We then develop a continuous version of this splitting, which helps in generalizing the centered flux equilibrium distribution set and the upwind equilibrium distribution set found in \cite{Anandan_2024_VKLB_Upwinding_Source_Term}. We then demonstrate, through development of an equivalent finite difference formulation, that including the conserved vector in the equilibrium distribution functions introduces a useful additional dissipative term with respect to the conserved variables. We also establish an equivalent finite volume formulation, showing that the method satisfies a discrete conservation law in which internal numerical fluxes cancel telescopically \cite{Leveqe_Finite_Volume_Book}. The weighted upwind distribution function set is tested on verification and benchmark problems for shallow water, hydrodynamics, and ideal MHD systems. The numerical results show the weighted upwind construction improves stability relative to the discontinuous upwind distribution function set while also improving accuracy relative to both the centered flux and discontinuous upwind distribution function sets. 

The remainder of this paper is organized as follows. In \autoref{s:2}  we review developments of the VKLB method, understanding the kinetic formulation, the centered flux equilibrium distribution function set and using a discontinuous flux splitting to construct an upwind equilibrium distribution function set. In \autoref{s:3}, we present the weighted upwind distribution function set by constructing a smooth flux splitting. We examine boundary conditions, and CFL constraints for a VKLB scheme. In Section \autoref{s:4}, we analyze the weighted upwind scheme by looking at equivalent finite difference and finite volume formulations. We also examine entropy results to constrain parameters introduced in the weighted upwind scheme. In \autoref{s:5}, we present numerical results for shallow water, Euler system of gas dynamics, and ideal MHD. In  \autoref{s:7}, we summarize the main conclusions of the theoretical and numerical work, as well as discuss future work.

% Main text
\section{Vector Kinetic Lattice Boltzmann Framework}\label{s:2}
The vector kinetic lattice Boltzmann (VKLB) scheme used to approximate a system of conservation laws is constructed from a set of semi-discrete vector kinetic equations posed on a uniform grid. In this section, we review the developments of the VKLB framework, summarizing the governing kinetic equation, the discrete moment structure, and various equilibrium distribution function sets in one and two dimensions \cite{Anandan_2024_VKLB_Upwinding_Source_Term, Natalini_Multi_Dimensional_Systems_Discrete_Systems}. This provides the mathematical foundation for the weighted upwind VKLB scheme developed in \autoref{s:3}.
\subsection{Vector Kinetic Lattice Boltzmann Equation}
Let $\Omega\subset\mathbb{R}^D$ denote a domain with coordinates $\mathbf{x}=(x_1,\dots,x_D)$, $\partial_t$ denote the partial time derivative and $\partial_{x_d}$ denote the partial spatial derivative in direction $d \in \{1, 2, \ldots, D\}$. Consider the hyperbolic system
\begin{subequations}\label{eq: hyperbolic-system}
\begin{align}
  \partial_t U(\mathbf{x},t)
  + \sum_{d=1}^{D}\partial_{x_d}F^{(d)}\bigl(U\bigr) 
  &= 0, \qquad \mathbf{x} \in \Omega, \  t \in (0,T],
  \label{eq:hyperbolic-system-a}\\
  U(\mathbf{x},0) &= U_0(\mathbf{x}),  \qquad \mathbf{x} \in \Omega,
  \label{eq:hyperbolic-system-b}
\end{align}
\end{subequations}
where $T > 0$, $U:\Omega\times[0,T]\to\mathbb{R}^p$ is the vector of conserved quantities and $F^{(d)}(U):\mathbb{R}^p\to\mathbb{R}^p$ are flux functions in direction $d$. In a vector kinetic lattice Boltzmann (VKLB) framework, vector valued distribution functions \(f_q(\mathbf{x},t)\in\mathbb{R}^p\) and discrete velocity vectors \(\mathbf{v}_q \in \mathbb{R}^D, \, q \in \{1, 2, \ldots, Q\}\) are introduced, where each $f_q(\mathbf{x},t)$ streams along a corresponding velocity vector $\mathbf{v}_q$ \cite{Anandan_2024_VKLB_Upwinding_Source_Term}. For each velocity vector $\mathbf{v}_q$, the nonzero components are equal in magnitude to a speed $\xi \equiv \xi(t)$, where $\xi(t)$ represents the streaming speed along coordinate directions at a time $t \in (0,T]$ \cite{Anandan_2024_VKLB_Upwinding_Source_Term, Natalini_Multi_Dimensional_Systems_Discrete_Systems, Wissocq_2024_Positive_Preserving_VKLB}. The distribution function set $\{f_q(\vb{x}, t)\}_{q=1}^Q$ satisfies a system of semi-discrete approximate Boltzmann equations, where the collision operator is the Bhatnagar--Gross--Krook (BGK) collision approximation \cite{Bhatnager_1954_BGK_Approximation},
\begin{subequations}\label{eq: discrete-kinetic-equation}
\begin{align}
  \partial_t f_q
  + \sum_{d=1}^D \partial_{x_d}\bigl(v_q^d\,f_q\bigr)
  &= -\frac{1}{\varepsilon}\bigl(f_q - f_q^{eq}(U)\bigr),
  \label{eq:discrete-kinetic-equation-a}\\
  f_q(\mathbf{x},0) &= f_q^{eq}\bigl(U(\mathbf{x},0)\bigr),
  \quad q=1,\dots,Q.
  \label{eq:discrete-kinetic-equation-b}
\end{align}
\end{subequations}
Here, $f_q^{eq}\left(U\right) \in \mathbb{R}^p$ are equilibrium distribution functions, the parameter $\varepsilon>0$ is a relaxation time (in a non-dimensional context this parameter corresponds to the Knudsen number \cite{Wissoq_2024_VKLB_Diffusion}), and $v_q^d \in \mathbb{R}$ is the d\textsuperscript{th} component of the velocity vector $\vb{v}_q$. The relationship between the vector of conserved quantities $U$ and the distribution function sets $\{f_q\}_{q=1}^{Q}, \, \{f_q^{eq}\}_{q=1}^Q$ is established by defining the \textit{first discrete moment},
\begin{equation}\label{eq: discrete-first-moment}
  \sum_{q=1}^Qf_q
  \;=\;\sum_{q=1}^Qf_q^{eq}
  \;=\;U.
\end{equation}
Summing \eqref{eq:discrete-kinetic-equation-a} over all $q$ yields the first moment equation,
\begin{equation}\label{eq: kinetic-first-moment}
  \partial_t\!\sum_{q=1}^Q f_q
  + \sum_{d=1}^D\partial_{x_d}\!\sum_{q=1}^Q\bigl(v_q^d\,f_q\bigr)
  = -\frac{1}{\varepsilon}
    \Bigl(\sum_{q=1}^Qf_q - \sum_{q=1}^Qf_q^{eq}\Bigr).
\end{equation}
Note that \eqref{eq:discrete-kinetic-equation-a}  implies $f_q\to f_q^{eq}$ in the limit $\varepsilon\to 0$. Thus, if the \textit{second discrete moment} of the equilibrium distribution functions satisfies
\begin{equation}\label{eq: discrete-second-moment}
  \sum_{q=1}^Qv_q^d\,f_q^{eq}(U)
  = F^{(d)}(U),
  \quad
  d=1,\dots,D,
\end{equation}
then using  \eqref{eq: discrete-first-moment} and \eqref{eq: discrete-second-moment} in  \eqref{eq: kinetic-first-moment} recovers the hyperbolic system \eqref{eq: hyperbolic-system} in the limit $\varepsilon\to 0$ \cite{Anandan_2024_VKLB_Upwinding_Source_Term}. Discretizing the semi-discrete equation \eqref{eq:discrete-kinetic-equation-a} using the method of characteristics, the lattice Boltzmann equation,
\begin{equation}\label{lattice-boltzmann-algorithm}
     f_q(\mathbf{x}, t) = (1 - \omega)f_q(\mathbf{x} - \mathbf{v}_q \Delta t, t - \Delta t) + \omega f_q^{eq}\left(U(\mathbf{x} - \mathbf{v}_q \Delta t, t - \Delta t)  \right)
\end{equation}
is constructed, where $\omega = \Delta t/\varepsilon$ is a non-dimensional relaxation parameter. The parameter \(\omega\) has a direct influence on the stability of the VKLB scheme as discussed in \autoref{ss: properties}.
\subsection{Equilibrium Distribution Functions}\label{ss: equilibrium_distribution_functions}
The BGK collision approximation \cite{Bhatnager_1954_BGK_Approximation} introduces a set of equilibrium distribution functions, $\{f_q^{eq}\}_{q=1}^Q$. In order for the coupled semi--discrete kinetic equations \eqref{eq:discrete-kinetic-equation-a} to approximate the hyperbolic system \eqref{eq: hyperbolic-system}, the equilibrium distribution function set needs to satisfy the necessary discrete moments \eqref{eq: discrete-first-moment} and \eqref{eq: discrete-second-moment}. We illustrate the construction of equilibrium distribution sets in 1D for simplicity. Letting  \(D=1\),  \(Q=3\) and denoting \(F^{(1)}=F\), the hyperbolic system \eqref{eq: hyperbolic-system} reduces to
\begin{equation}\label{eq: one_dimension_hyperbolic_system}
  \partial_t U \;+\;\partial_{x} F(U)\;=\;0.
\end{equation}
Denote discrete lattice sites by $\{x_i\}_{i=1}^N$ with a fixed spacing $\Delta x$. Let \(\xi = \Delta x / \Delta t\) denote the streaming speed. We define the one--dimensional discrete velocities by \(\mathbf{v}_1 = (\xi)\), \(\mathbf{v}_2 = (0)\), and \(\mathbf{v}_3 = (-\xi)\), with corresponding distribution functions \(f_1\), \(f_2\), and \(f_3\), as shown in \autoref{fig: d1q3_lattice_structure}.
 \begin{center}
  \includegraphics[width=0.65\columnwidth]{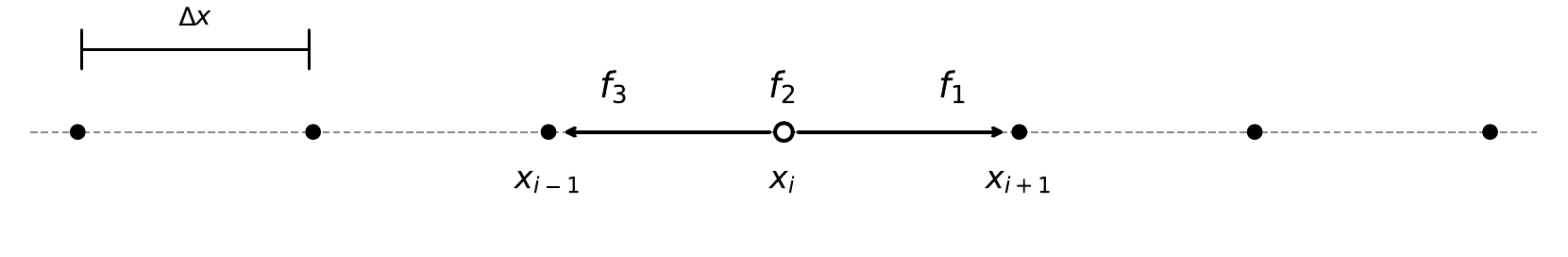}
  \captionsetup{hypcap=false}
  \captionof{figure}{One dimensional lattice (\(D=1\)) with three discrete velocities (\(Q=3\)). The distribution functions \(f_1\), \(f_2\), and \(f_3\) have corresponding velocity vectors \(\mathbf{v}_1 = (\xi)\), \(\mathbf{v}_2 = (0)\), and \(\mathbf{v}_3 = (-\xi)\). Here, \(\Delta x\) is the distance traversed by the flow of the distribution functions $f_1, \, f_3$ in the positive and negative x directions during a time increment \(\Delta t\).}
  \label{fig: d1q3_lattice_structure}
\end{center}
With only the constraints \eqref{eq: discrete-first-moment}, \eqref{eq: discrete-second-moment}, any number of equilibrium distribution functions can be constructed. A common distribution function set is the \textbf{\textit{centered flux distribution function set}} \cite{Anandan_2024_VKLB_Upwinding_Source_Term}, 
\begin{align}
  f^{eq}_1(U)
  &= \frac{1}{3}U + \frac{1}{2\xi}\,F\bigl(U\bigr),
  \label{eq: centered-f1}\\
  f^{eq}_2(U)
  &= \,\frac{1}{3}U,
  \label{eq: centered-f2}\\
  f^{eq}_3(U)
  &= \frac{1}{3}U - \frac{1}{2\xi}\,F\bigl(U\bigr).
  \label{eq: centered-f3}
\end{align}
Upwinding can be introduced into the VKLB scheme by using a flux splitting of the physical flux \(F(U)\) within the equilibrium distribution functions. A \textit{flux splitting} of \(F(U)\) is a decomposition
\begin{equation}\label{eq:flux-splitting-definition}
  F(U) = F^{(+)}(U) + F^{(-)}(U).
\end{equation}
 where  \(F^{(+)}(U), \; F^{(-)}(U) : \mathbb{R}^p \rightarrow \mathbb{R}^p \).
%
% The \textit{upwind distribution function set} \cite{Anandan_2024_VKLB_Upwinding_Source_Term} incorporates a flux splitting of $F(U)$ by,
An equilibrium distribution set based on the flux splitting \eqref{eq:flux-splitting-definition} is given by \cite{Anandan_2024_VKLB_Upwinding_Source_Term},
\begin{align}
  f^{eq}_1(U)
  &= \frac{1}{\xi}\,F^{(+)}\bigl(U\bigr),
  \label{eq: upwinding-f1}\\
  f^{eq}_2(U)
  &= \,U
    \;-\;\frac{1}{\xi}\bigl[F^{(+)}\bigl(U\bigr)
    -F^{(-)}\bigl(U\bigr)\bigr],
  \label{eq: upwinding-f2}\\
  f^{eq}_3(U)
  &= -\frac{1}{\xi}\,F^{(-)}\bigl(U\bigr).
  \label{eq: upwinding-f3}
\end{align}
The second discrete moment constraint \eqref{eq: discrete-second-moment} requires $F^{+}\bigl(U\bigr) + F^{-}\bigl(U\bigr) = F\bigl(U\bigr)$, which is naturally satisfied from the flux splitting definition \eqref{eq:flux-splitting-definition}. In the context of finite volume methods a variety of splitting methods have been proposed for specific hyperbolic systems such as shallow water \cite{Toro_2022_Shallow_Water_Flux_Splitting, Rebollo_2003_Shallow_Water_Flux_Splitting}, Euler equations \cite{Toro_2012_Euler_Flux_Splitting, Chu_2025_Euler_Flux_Splitting}, and ideal magnetohydrodynamics (MHD) \cite{Borah_2016_Ideal_MHD_Flux_Splitting, Zhang_2024_Ideal_MHD_Flux_Splitting}.

We use a general flux splitting method, seen in a finite volume setting \cite{Leveqe_Finite_Volume_Book}, that lends itself to any hyperbolic system. This splitting is based on the eigenvalues of the Jacobian matrix $\partial_UF(U)$. Since the system \eqref{eq: hyperbolic-system} is assumed to be hyperbolic, the Jacobian has an eigenvalue decomposition
\begin{equation}\label{eq: eigenvalue_decomposition}
  \partial_{U}{F}(U)
  =P(U)\,\Lambda(U)\,P(U)^{-1},
\end{equation}
where
\begin{itemize}
  \item \(P(U)=[\,r^{1},\dots,r^{p}\,]\) matrix of linearly independent right‐-eigenvectors,
  \item \(\Lambda(U) = \text{diag}(\lambda^{1},\dots,\lambda^{p})\) diagonal matrix of real valued eigenvalues (characteristic values).
\end{itemize}
Denote the evaluation of the flux $F(U)$ at a lattice site $x_i$ by $F_i$, that is \(F_i=F(U(x_i))\).  The flux vector $F_i$ is expanded with respect to the right eigenvectors of the decomposition \eqref{eq: eigenvalue_decomposition},
\begin{equation}\label{eq: expansion_coefficients}
    \Phi_i
  =\bigl(\phi^{1}_i,\dots,\phi^{p}_i\bigr)^{\!T}
  = P_i^{-1}F_i \; \Rightarrow \;F_i
  =\sum_{j=1}^p\phi^{j}_i\,r^{j}_i.
\end{equation}
Each coefficient $\phi_i^j$ is split based on the sign of the corresponding eigenvalue \(\lambda_i^j\),
\begin{equation}\label{eq: eigenvalue-split-discrete}
  \phi_i^{j,+}
  =\begin{cases}
    \phi_i^{j},&\lambda^{j}_i\geq0,\\
    0,&\lambda^{j}_i<0,
  \end{cases}
  \quad
  \phi_i^{j,-}
  =\begin{cases}
    0,&\lambda^{j}_i\geq0,\\
    \phi_i^{j},&\lambda^{j}_i<0.
  \end{cases}
\end{equation}
The \textit{partial fluxes} are constructed as
\begin{equation}\label{eq: discontinuous_partial_fluxes}
  F_i^{(+)}
  =\sum_{j=1}^p\phi_i^{j,+}\,r_i^{j},
  \quad
  F_i^{(-)}
  =\sum_{j=1}^p\phi_i^{j,-}\,r_i^{j}.
\end{equation}
At each lattice site $x_i$, the partial fluxes \eqref{eq: discontinuous_partial_fluxes} satisfy the flux splitting definition \eqref{eq:flux-splitting-definition}, \(F_i=F_i^{(+)}+F_i^{(-)}\). For hyperbolic systems which are \textit{homogeneous of degree one}, that is $F(U) = (\partial_UF)\cdot U$, the splitting \eqref{eq: eigenvalue-split-discrete} is equivalent to the Steger-Warming splitting \cite{Leveqe_Finite_Volume_Book, Steger_1981_Euler_Splitting}. 

We call the  distribution function set \eqref{eq: upwinding-f1} - \eqref{eq: upwinding-f3} using the partial fluxes \eqref{eq: discontinuous_partial_fluxes} a \textbf{\textit{discontinuous upwind distribution function set}}. Numerical results in \autoref{s:5} show instabilities occur for the discontinuous upwind distribution function set when an eigenvalue of the Jacobian matrix changes sign or if there is not sufficient numerical diffusion in the system.

\subsection{Stability Constraints for the Vector Kinetic Lattice Boltzmann Distribution Functions}\label{ss: properties}
A Chapman-Enskog multiscale analysis of \eqref{lattice-boltzmann-algorithm} recovers the the conservation laws \eqref{eq: hyperbolic-system} albeit with an introduction of a first order diffusive term~\cite{Anandan_2024_VKLB_Upwinding_Source_Term}. This term depends on both the relaxation parameter \( \omega \), the equilibrium distribution function set \(\{f_q^{eq}(U)\}_{q = 1}^{Q}\) and the streaming speed \( \xi \). For simplicity, let \(D = 1\) and \(Q = 3\) as illustrated in \autoref{fig: d1q3_lattice_structure}. The first order diffusive term takes the form
\begin{equation}\label{subcharacteristic_condition}
 E_T =
\Delta t\,
\left(\tfrac{1}{\omega}-\tfrac{1}{2}\right)\,
\partial_{x}\!\left(
\left[
\partial_U\sum_{q=1}^{3} \mathbf{v}_q^2 f_q^{eq}(U)
-\left(\partial_U F\right)^2
\right]
\,\partial_{x}U\right) + \mathcal{O}(\Delta t^2).
\end{equation}
For the diffusive operator to be well posed, the  corresponding coefficients of the $O(\Delta t)$ term need to be positive. The first coefficient, \(\left(\tfrac{1}{\omega} - \tfrac{1}{2}\right) \) constrains the relaxation parameter to \( \omega \in (0,2] \). Within this range, the choice of \( \omega \) determines the formal order of accuracy of the VKLB scheme. When \( \omega < 2 \), the method retains a nonzero first order diffusive contribution and is therefore first order accurate. In contrast, setting \( \omega = 2 \) eliminates this leading order diffusion term, yielding a second order accurate scheme. Thus, \( \omega = 2 \) is preferred when higher order accuracy is the objective (e.g. for smooth solutions), whereas \( \omega < 2 \) intentionally retains numerical diffusion, providing stability in the presence of shocks and discontinuities.

As demonstrated in \cite{Anandan_2024_VKLB_Upwinding_Source_Term} restricting $0 < \omega \leq 1$ associates an H--inequality with the lattice Boltzmann equation \eqref{lattice-boltzmann-algorithm}, which is a stronger constraint than the positivity of the coefficient \(\left(\tfrac{1}{\omega} - \tfrac{1}{2}\right) \). In addition, the distribution function set \eqref{eq: upwinding-f1}--\eqref{eq: upwinding-f3} has several desirable properties when \( \omega \leq 1 \), including positivity, consistency, and total variation boundedness \cite{Anandan_2024_VKLB_Upwinding_Source_Term}. For shock-dominated solutions, we set \(\omega = 1\) to preserve the desired theoretical properties while minimizing numerical diffusion.

The streaming speed \( \xi \) is constrained through the remaining diffusive coefficient, commonly referred to as the \textit{subcharacteristic condition},
\begin{equation}\label{eq: subcharacteristic_condition}
\partial_U\sum_{q=1}^{3} \mathbf{v}_q^2 f_q^{eq}(U)
-\left(\partial_U F\right)^2 > 0.
\end{equation}
For the centered flux distribution function set \eqref{eq: centered-f1}--\eqref{eq: centered-f3}, this condition can be evaluated explicitly and reduces to \cite{Anandan_2024_VKLB_Upwinding_Source_Term}
\[
\xi > \sqrt{\frac{3}{2}} \max_j\{|\lambda^j|\}.
\]
This leads to a constraint on the streaming speed,
\begin{equation}
    \eta = \sqrt{\frac{3}{2}}\left(\frac{\max_j\{|\lambda^j|\}}{\xi}\right) < 1
\end{equation}
However, for the discontinuous upwind distribution function set \eqref{eq: upwinding-f1}--\eqref{eq: upwinding-f3}, the subcharacteristic condition \eqref{eq: subcharacteristic_condition} does not admit a straightforward closed form constraint. In this case we appeal to the classical hyperbolic CFL constraint  \cite{Leveqe_Finite_Volume_Book} in \autoref{ss: CFL}, which provides a practical stability restriction for the moment systems \eqref{eq: hyperbolic-system} corresponding to general hyperbolic formulations. This CFL constraint is used in place of the subcharacteristic condition for the discontinuous upwind distribution function set \eqref{eq: upwinding-f1}--\eqref{eq: upwinding-f3} and the proposed weighted upwind distribution function set \eqref{eq: adjusted-f1}--\eqref{eq: adjusted-f3} introduced in \autoref{s:3}. The effectiveness of this choice is considered in the challenging verification and benchmark problems presented in \autoref{s:5}.

\section{Weighted Upwind Vector Kinetic Lattice Boltzmann Method}\label{s:3}
In this section we propose a \textit{weighted upwind vector kinetic lattice Boltzmann} (WU-VKLB) scheme. This equilibrium distribution function set is designed to combine the favorable stability properties of the centered flux distribution function set with the improved resolution of the discontinuous upwind distribution function set. In particular, while the centered flux distribution function set \eqref{eq: centered-f1} - \eqref{eq: centered-f3} provides a stable discretization, it introduces excessive numerical diffusion near sharp gradients. By contrast, the discontinuous upwind distribution function set \eqref{eq: upwinding-f1} - \eqref{eq: upwinding-f3} improves the numerical approximation in smooth regions and near discontinuities, but suffers from instabilities when an eigenvalue changes sign within the computational domain. The weighted upwind distribution function set combines these two limiting constructions in a manner that keeps the desired stability and accuracy of each scheme. 
\subsection{Weighted Upwind Distribution Functions}\label{ss: weighted_upwinding_set}
For simplicity, we first consider the case of one dimension,  \(D=1\) and \(Q = 3\). Let \(\mathbf{v}_1 = (\xi)\), \(\mathbf{v}_2 = (0)\), and \(\mathbf{v}_3 = (-\xi)\) denote the discrete velocities, with corresponding distribution functions \(f_1\), \(f_2\), and \(f_3\), as illustrated in \autoref{fig: d1q3_lattice_structure}. Consider the distribution function set given by
\begin{align}
  f^{eq}_1(U)
  &= cU + \frac{1}{\xi}\,\mathcal{F}^{(+)}\bigl(U\bigr),
  \label{eq: adjusted-f1}\\
  f^{eq}_2(U)
  &= (1 - 2c)\,U
    \;-\;\frac{1}{\xi}\bigl[\mathcal{F}^{(+)}\bigl(U\bigr)
    -\mathcal{F}^{(-)}\bigl(U\bigr)\bigr],
  \label{eq: adjusted-f2}\\
  f^{eq}_3(U)
  &= cU - \frac{1}{\xi}\,\mathcal{F}^{(-)}\bigl(U\bigr).
  \label{eq: adjusted-f3}
\end{align}
The terms $\mathcal{F}^{(+)}(U), \, \mathcal{F}^{(-)}(U)$ are flux splitting components of $F(U)$, that is $F(U) = \mathcal{F}^{(+)}(U) + \mathcal{F}^{(-)}(U)$. The introduction of the parameter $c \in \mathbb{R}$ is used in \autoref{s:4} to ensure entropy results for \eqref{eq: discrete-kinetic-equation} \cite{Wissocq_2024_Positive_Preserving_VKLB}. 
 The distribution function set \eqref{eq: adjusted-f1}--\eqref{eq: adjusted-f3} is a generalization of the discontinuous upwind and centered flux distribution function sets. Setting $c = 1/3$ and $\mathcal{F}^{(+)}(U) = \mathcal{F}^{(-)}(U) = \tfrac{1}{2}F(U)$ recovers the centered flux distribution function set \eqref{eq: centered-f1} - \eqref{eq: centered-f3}, while setting $c = 0$ and $\mathcal{F}^{(\pm)}(U) = F^{(\pm)}(U)$ recovers the discontinuous upwind distribution function set \eqref{eq: upwinding-f1}--\eqref{eq: upwinding-f3}. 
 
 Employing \eqref{eq: adjusted-f1}--\eqref{eq: adjusted-f3} we pursue the development of a continuous VKLB flux vector splitting \eqref{eq:flux-splitting-definition} that provides the benefits of the increased accuracy of upwinding while mitigating the stability issues often observed in finite volume methods when an eigenvalue crosses zero \cite{TANG2005507, Toro_2009_Riemann_Solvers, Zha_Bilgen_Euler_Splitting, Leveqe_Finite_Volume_Book}. These types of eigenvalue sign transitions can often be associated with phenomena such as transonic flow in the case of the Euler system. \cite{Leveqe_Finite_Volume_Book, Toro_2009_Riemann_Solvers, 1982LNP...170..507V}. Hence for the VKLB flux splitting components $\mathcal{F}^{\pm}(U)$ in \eqref{eq: adjusted-f1} - \eqref{eq: adjusted-f3}, we propose a \textbf{\textit{weighted upwind splitting}}, where the discontinuous flux splitting \eqref{eq: eigenvalue-split-discrete} is made continuous by weighting the coefficients using a smooth function $\alpha$. We take the function $\alpha$ to be parameterized by the eigenvalues of the hyperbolic system \eqref{eq: hyperbolic-system}. The coefficients in \eqref{eq: expansion_coefficients} are split at a lattice site $x_i$ as 
\begin{equation}\label{eq: eigenvalue-split-smooth}
  \psi_i^{j,+} = \alpha(\lambda_i^j)\phi_i^{j},
  \quad
  \psi_i^{j,-} = (1 - \alpha(\lambda_i^j))\phi_i^{j}
\end{equation}
where $\alpha \in [0,1]$ and $\alpha(0) = \tfrac{1}{2}$. The partial fluxes are constructed with the coefficients \eqref{eq: eigenvalue-split-smooth},
\begin{equation}\label{eq: weighted-flux-splitting}
      \mathcal{F}_i^{(+)}
      =\sum_{j}\psi_i^{j,+}\,r_i^{j},
      \quad
      \mathcal{F}_i^{(-)}
      =\sum_{j}\psi_i^{j,-}\,r_i^{j}.
\end{equation}
The partial fluxes \eqref{eq: weighted-flux-splitting} satisfy the flux decomposition definition \eqref{eq:flux-splitting-definition}, $F(U) = \mathcal{F}^{(+)}(U) + \mathcal{F}^{(-)}(U)$. The function $\alpha$ we consider for the weighted upwind splitting \eqref{eq: eigenvalue-split-smooth} is the sigmoid function,
\begin{equation}\label{eq: parameterization-function}
\alpha\left(\frac{\lambda}{|\lambda_{max}|}; k\right) = \frac{1}{1 + e^{-k\lambda/\max_{j}(|\lambda_j|)}}.
\end{equation}
\begin{center}
  \includegraphics[width=0.4\columnwidth]{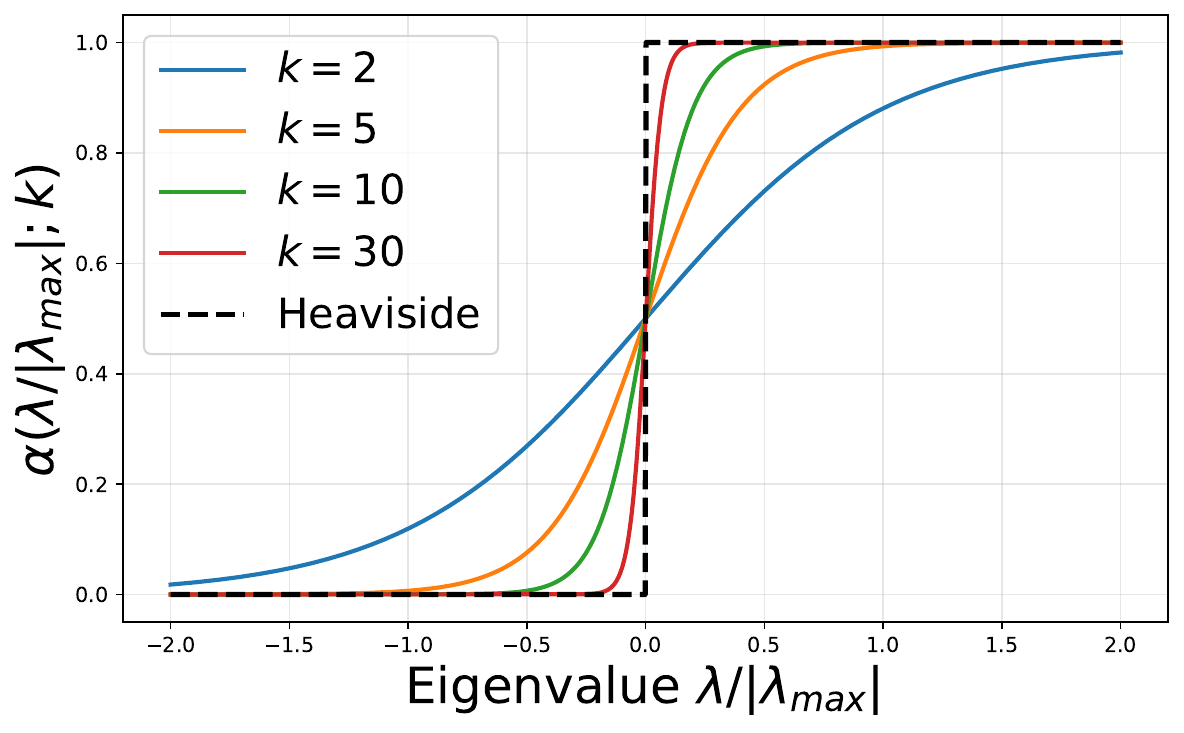}
  \captionsetup{hypcap=false}
  \captionof{figure}{Sigmoid transition function \(\alpha\left(\tfrac{\lambda}{|\lambda_{max}|};k\right)\) for $\lambda > 0$.}
  \label{fig:alpha-sigmoid}
\end{center}
We call the distribution function set \eqref{eq: adjusted-f1}--\eqref{eq: adjusted-f3} using the partial fluxes \eqref{eq: weighted-flux-splitting} a \textbf{\textit{weighted upwind distribution function set}}.

The eigenvalues in \eqref{eq: parameterization-function} are normalized so the resulting parameterization is scale invariant. The sigmoid function \eqref{eq: parameterization-function} allows the flux structure of the weighted upwind distribution function set \eqref{eq: adjusted-f1}--\eqref{eq: adjusted-f3} to vary continuously between the centered flux distribution function set \eqref{eq: centered-f1}--\eqref{eq: centered-f3} and the discontinuous upwind distribution set \eqref{eq: upwinding-f1}--\eqref{eq: upwinding-f3}. As an eigenvalue \(\lambda\) approaches zero, the partial fluxes \eqref{eq: weighted-flux-splitting} satisfies \(\mathcal{F}^{\pm}(U) \rightarrow \tfrac{1}{2}F(U)\). The partial fluxes \eqref{eq: weighted-flux-splitting} equal the discontinuous partial fluxes \eqref{eq: eigenvalue-split-discrete} as k goes to infinity, that is \(\mathcal{F}^{\pm}(U) \rightarrow F^{\pm}(U)\) as $k \rightarrow \infty$. 

\subsection{CFL Constraint}\label{ss: CFL}
Stability of any hyperbolic solver requires the numerical domain of dependence to include the physical one. Let \(\Delta x\) denote the spacing between lattice sites and \(\xi\) the nonzero streaming speed; both are illustrated in \autoref{fig: d1q3_lattice_structure}. For a system with \(p\) characteristic speeds \(\{\lambda^j\}_{j=1}^p\), the Courant–Friedrich–Lewy (CFL) condition reads \cite{Leveqe_Finite_Volume_Book}
\begin{equation}\label{eq: CFL-Leveque}
  \nu \;=\;\frac{\Delta t}{\Delta x}\,\max_{j}|\lambda^j|\;\le1\,.
\end{equation}
In the VKLB scheme, exact streaming from one lattice site to its neighbor in a single time‐step $\Delta t$ requires
\[
  \Delta x \;=\;\xi\,\Delta t
  \quad\Longrightarrow\quad
  \xi \;=\;\frac{\Delta x}{\Delta t}\,.
\]
Substituting this expression into \eqref{eq: CFL-Leveque} results in
\[
  \nu
  =\frac{\Delta t}{\Delta x}\,\max_j|\lambda^j|
  =\frac{\max_j|\lambda^j|}{\xi}
  \;\le1
  \quad\Longrightarrow\quad
  \xi\;\ge\;\max_j|\lambda^j|\,.
\]
Equivalently, for a prescribed CFL number \(0<\nu\le1\),
\begin{equation}\label{eq: lattice_velocity_CFL}
  \xi \;=\;\frac{\max_{j}|\lambda^j|}{\nu}\,.
\end{equation}
\subsection{Boundary Conditions}\label{ss:boundary_conditions}
We consider nonreflecting and periodic boundary conditions in this article.
\subsubsection*{Nonreflecting Boundary Conditions}
Shock problems are typically posed on non-periodic domains, where special care is needed to avoid spurious reflections from the boundaries back into the computational domain. To address this, we impose characteristic (nonreflecting) boundary conditions based on the hyperbolic structure \eqref{eq: eigenvalue_decomposition} of \eqref{eq: hyperbolic-system}. This construction controls incoming waves while allowing outgoing waves to pass freely through the domain boundaries. Such boundary treatments have been developed previously for polynomial distribution functions \cite{Kruger_2017_LB_Graduate_Book,Heubes_2014_Characteristic_BC,Klass_2024_Characteristic_BC} and we extend this construction to the vector kinetic setting. For simplicity, we construct the characteristic boundary conditions in one-dimension. We consider the case \(D = 1\) and \(Q = 3\), with \(x_b\) denoting a boundary point; see \autoref{fig: d1q3_lattice_nonreflecting}. Applying the chain rule to \eqref{eq: hyperbolic-system} yields the equivalent quasi-linear form
\begin{equation}\label{eq: quasilinear_form}
  \partial_t U + (\partial_U F)\,\partial_x U = 0.
\end{equation}
Substituting in the eigenvalue decomposition \eqref{eq: eigenvalue_decomposition} into \eqref{eq: quasilinear_form} motivates the introduction of \textit{characteristic variables} $W$
\begin{equation}\label{eq: characteristic_variables}
  W \;=\; P^{-1}U,
\end{equation}
so that the linearized system about the boundary point $x_b$ decouples \eqref{eq: quasilinear_form} into
\begin{equation}\label{eq: char-adv}
  \partial_t W + \Lambda\, \partial_x W = 0,
  \quad
  W = (W^1,\dots,W^p)^T,
  \quad
  \Lambda \partial_x W = (\lambda^{j}\,\partial_x W^{j})_{j=1}^p.
\end{equation}
\begin{figure}
  \centering

  \begin{subfigure}[t]{0.48\textwidth}
    \centering
    \includegraphics[width=\linewidth]{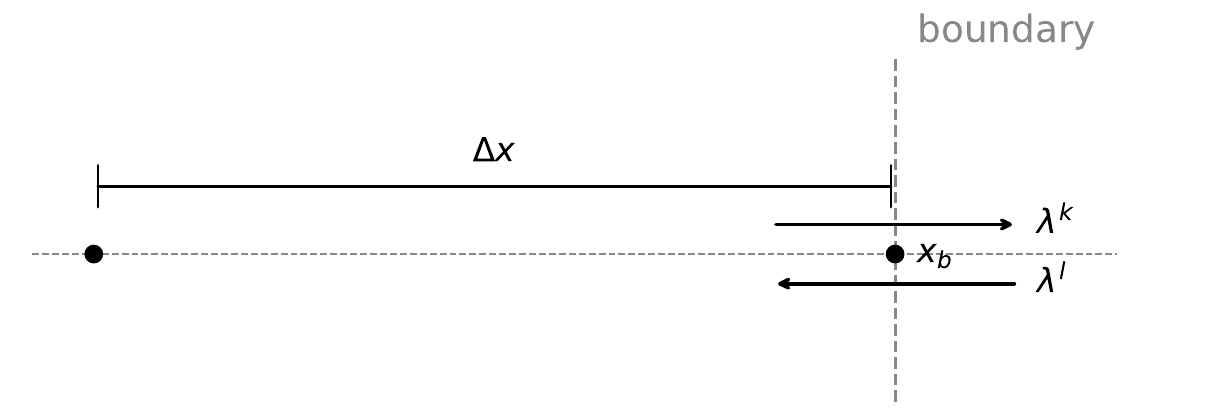}
    \caption{Nonreflecting boundary conditions.}
    \label{fig: d1q3_lattice_nonreflecting}
  \end{subfigure}
  \hfill
  \begin{subfigure}[t]{0.48\textwidth}
    \centering
    \includegraphics[width=\linewidth]{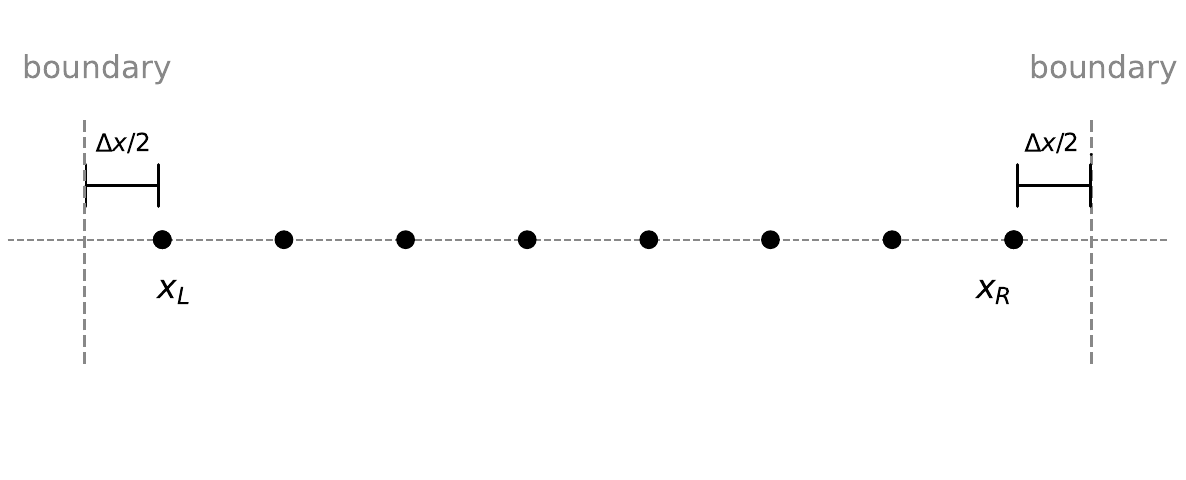}
    \caption{Periodic boundary conditions.}
    \label{fig: d1q3_lattice_periodic}
  \end{subfigure}

  \caption{One-dimensional lattice, \(D=1\). (a)
  For non-reflecting boundary conditions, the lattice point \(x_b\) is on the computational/physical boundary. The eigenvalue \(\lambda^k > 0\) while \(\lambda^l < 0\), corresponding to outward and inward flow of the characteristic variables. (b) For periodic boundary conditions, the lattice nodes $x_L$ and $x_R$ are within the fluid.}
  \label{fig: d1q3_lattice_structure_boundary}
\end{figure}
The eigenvalues of the decoupled system \eqref{eq: char-adv} determine the propagation direction of the characteristic variables \eqref{eq: characteristic_variables} at a boundary point $x_b$. For example, take the right boundary point $x_b$ in \autoref{fig: d1q3_lattice_nonreflecting} where positive characteristic values \(\lambda^k>0\) carry waves out of the domain while negative characteristic values \(\lambda^l<0\) introduce/reflect waves back into the domain. In order to have a non‐reflecting condition, we set incoming waves to zero by setting the corresponding eigenvalue to zero. At the lattice boundary point $x_b$, we adjust the decoupled system \eqref{eq: char-adv} to be, 
\begin{equation}\label{eq:mod-chi-deriv}
  \left(\lambda^j \partial_x W^j\right)'
  = \begin{cases}
    \lambda^j\,\partial_x W^j, & \lambda^j>0\ (\text{outgoing}),\\[3pt]
    0, & \lambda^j<0\ (\text{incoming}),
  \end{cases}
  \qquad
  j = 1, 2, \ldots p.
\end{equation}
At the lattice boundary point $x_b$, replace \(\Lambda \partial_x W \) in the decoupled system \eqref{eq: char-adv} with the modification \eqref{eq:mod-chi-deriv}. The linearized system about $x_b$ reduces to an ODE that can be solved for the vector of conserved quantities,
\begin{equation}\label{eq:Ub-t}
  \partial_t U(x_b,t)
  = -\,P\;\left(\Lambda \partial_x W\right)'(x_b,t).
\end{equation}
Using a first order approximation with a time step \(\Delta t\),
\begin{equation}\label{eq:Ub-update}
  U(x_b,t+\Delta t)
  =
  U(x_b,t)
  \;-\;
  \Delta t\;P\;(\Lambda \partial_x W)'(x_b,t).
\end{equation}
The distribution functions $\{f_q\}_{q=1}^3$ are then determined at the boundary node $x_b$ by setting them equal to the equilibrium distribution functions evaluated with \eqref{eq:Ub-update},
\begin{equation}\label{eq:bc-fq}
  f_q(x_b,t+\Delta t)
  \;=\;
  f_q^{eq}\!\bigl(U(x_b,t+\Delta t)\bigr),
  \quad
  q=1,\dots,Q.
\end{equation}
Equations \eqref{eq: char-adv}–\eqref{eq:bc-fq} provide a linearized procedure for imposing non‐reflecting conditions in the VKLB framework. 
\subsubsection*{Periodic Boundary Conditions}
For periodic boundary conditions, it is essential to ensure that the vector equilibrium distribution functions maintain their periodicity. For simplicity, consider a one-dimensional domain $\Omega = [0,L]$ and three discrete velocities, as illustrated in \autoref{fig: d1q3_lattice_structure}. Let $x_R$ and $x_L$ be nodes $\Delta x/2$ away from the computational boundary (see \autoref{fig: d1q3_lattice_periodic}). Under periodicity, distributions streaming out of one side of the domain are reintroduced through the opposite side. Thus, after the collision step, the post-collision distributions that leave the domain at one boundary are used as the incoming post-streaming distributions at the opposite boundary (see \autoref{fig: flow_chart}); see \cite{Kruger_2017_LB_Graduate_Book}. For the setup in \autoref{fig: d1q3_lattice_structure} and \autoref{fig: d1q3_lattice_periodic}, the periodicity of the distribution functions takes the form,
\begin{equation}\label{eq: periodic_boundary_conditions}
    f_1(\mathbf{x}_L, t + \Delta t) = f_1^*(\mathbf{x}_R - \mathbf{v}_1 \Delta t, t), \quad f_3(\mathbf{x}_R, t + \Delta t) = f_3^*(\mathbf{x}_L - \mathbf{v}_3 \Delta t, t)
\end{equation}

\subsection{Extension to Higher Dimensions}\label{ss: higher_dimensions}
The weighted upwind distribution function set \eqref{eq: adjusted-f1}--\eqref{eq: adjusted-f3} extends in a straightforward manner to higher dimensions. Here we briefly provide the changes in the formulations for the two dimensions. The lattice is composed of points $\{(x_i,y_i)\}_{i=1}^N$ and has uniform spacing in both the $x$ and $y$ directions,  $\Delta x = \Delta y$. Let $D=2$, $Q =5$ and let $\xi$ be the streaming speed along coordinate directions. The discrete velocity vectors $\mathbf{v}_q$ are given by,
\begin{equation}\label{eq: general_velocity_vectors}
\mathbf{v}_1 =
\begin{pmatrix}
\xi \\
0
\end{pmatrix}, \,
\mathbf{v}_2 =
\begin{pmatrix}
-\xi \\
\phantom{-} 0
\end{pmatrix}, \,
\mathbf{v}_3 =
\begin{pmatrix}
0\\
0
\end{pmatrix}, \,
\mathbf{v}_4 =
\begin{pmatrix}
0\\
\xi 
\end{pmatrix}, \,
\mathbf{v}_5 =
\begin{pmatrix}
\phantom{-}0 \\
-\xi 
\end{pmatrix}.
\end{equation}
\begin{center}
  \includegraphics[width=0.35\columnwidth]{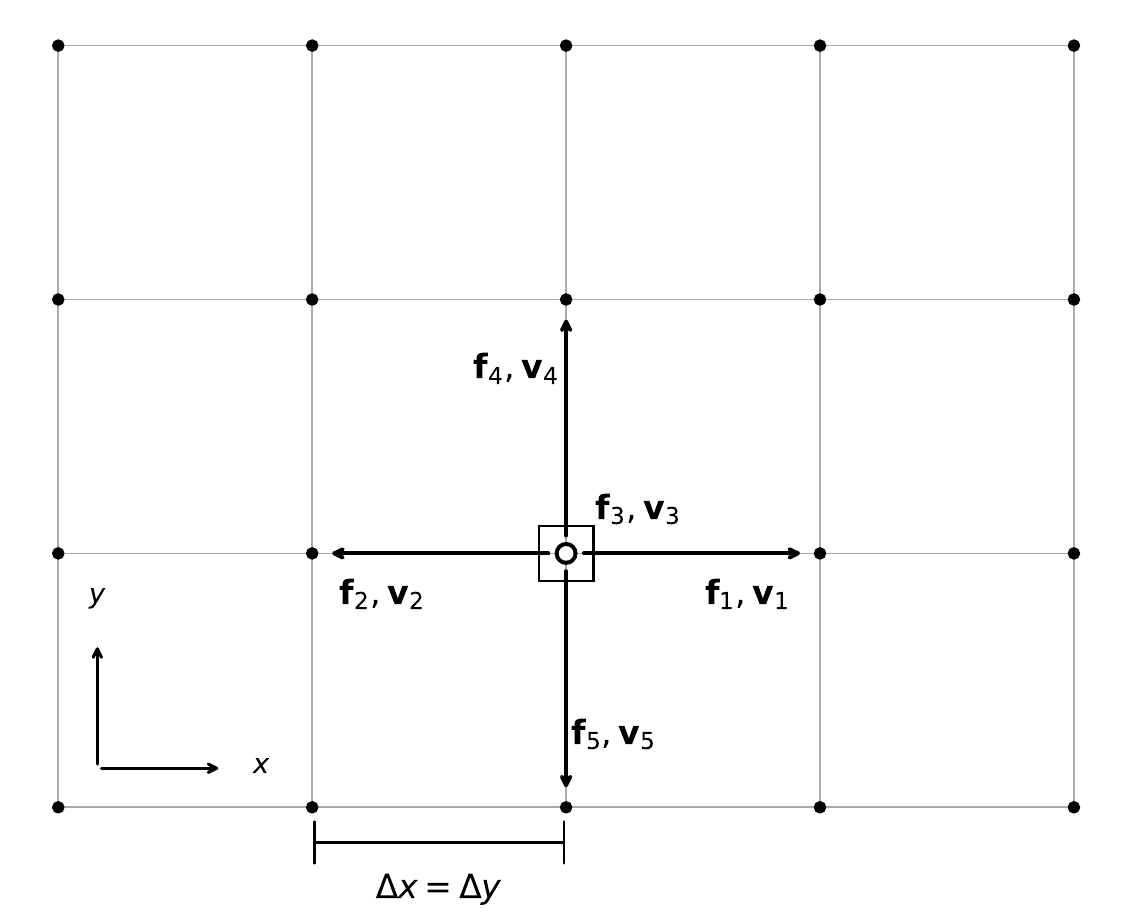}
  \captionsetup{hypcap=false}
  \captionof{figure}{Orthogonal vectors \eqref{eq: general_velocity_vectors} relative to the coordinate axis.}
  \label{fig: d2q5_lattice_structure}
\end{center}
Each velocity vector in \eqref{eq: general_velocity_vectors} has a corresponding equilibrium distribution function $f^{eq}_1, f^{eq}_2, f^{eq}_3, f^{eq}_4, f^{eq}_5$. The \textbf{\textit{2D centered flux distribution functions}} with respect to the velocity vectors \eqref{eq: general_velocity_vectors} are given by,
\begin{equation}\label{eq: 2d_centered_flux}
f_1^{eq} = \frac{1}{5}U + \frac{1}{2\xi}F^{(1)}(U),  \ 
f_2^{eq} = \frac{1}{5}U - \frac{1}{2\xi}F^{(1)}(U), \ 
f_3^{eq} = \frac{1}{5}U, \ 
f_4^{eq} = \frac{1}{5}U + \frac{1}{2\xi}F^{(2)}(U), \ 
f_5^{eq} = \frac{1}{5}U - \frac{1}{2\xi}F^{(2)}(U).
\end{equation}

We next formulate a two-dimensional discontinuous upwind distribution function set with respect to a discrete velocity vectors \eqref{eq: general_velocity_vectors}. Since the system \eqref{eq: hyperbolic-system} is assumed to be hyperbolic, the eigenvalue decomposition \eqref{eq: eigenvalue_decomposition} is well defined along any direction $d \in \{1, 2, \ldots, D\}$. Let $F^{(d,\pm)}(U) \in \mathbb{R}^p$ be the partial flux components \eqref{eq: discontinuous_partial_fluxes} of the physical flux $F^{d}(U)$ along the coordinate direction $d$. The \textbf{\textit{2D discontinuous upwind distribution function set}} with respect to the velocity vectors \eqref{eq: general_velocity_vectors} takes the form,
\begin{equation}\label{eq: 2d_discontinuous_upwind}
\begin{aligned}
&f_1^{eq} = \frac{1}{\xi}F^{(1,+)}(U), \ f_2^{eq} = -\frac{1}{\xi}F^{(1,-)}(U), \ f_4^{eq} = \frac{1}{\xi}F^{(2,+)}(U),  \ f_5^{eq} = -\frac{1}{\xi}F^{(2,-)}(U),\\
&f_3^{eq} = U - \frac{1}{\xi}\left[  F^{(1,+)}(U) + F^{(2,+)}(U) - F^{(1,-)}(U) - F^{(2,-)(U)}
\right] 
.
\end{aligned}
\end{equation}
% 
% \begin{align*}
% f_1^{eq} &= \phantom{-}\frac{1}{\xi}\left( \cos(\theta)F^{(1,+)} + \sin(\theta)F^{(2,+)}\right) \\
% f_2^{eq} &= -\frac{1}{\xi}\left( \cos(\theta)F^{(1,-)} + \sin(\theta)F^{(2,-)}\right) \\
% f_3^{eq} &= U - \frac{1}{\xi}\left[ (\cos(\theta) + \sin(\theta))\left( F^{(1,+)} + F^{(2,+)} - F^{(1,-)} - F^{(2,-)} \right)
% \right] \\
% f_4^{eq} &= \phantom{-}\frac{1}{\xi}\left( \cos(\theta)F^{(2,+)} - \sin(\theta)F^{(1,-)}\right) \\
% f_5^{eq} &= -\frac{1}{\xi}\left( \cos(\theta)F^{(2,-)} - \sin(\theta)F^{(1,+)}\right)
% \end{align*}
The construction the weighted upwind distribution function set is similar to that of discontinuous upwind distribution function set. Let $\mathcal{F}^{(d,\pm)}(U) \in \mathbb{R}^p$ be the partial flux components \eqref{eq: weighted-flux-splitting} of the physical flux $F^{(d)}(U)$ along the coordinate direction $d$. The \textbf{\textit{2D weighted upwind distribution function set}} takes the form,
\begin{equation}\label{eq: 2d_weighted_upwind}
\begin{aligned}
&f_1^{eq} = cU + \frac{1}{\xi}\mathcal{F}^{(1,+)}(U), \
f_2^{eq} = cU -\frac{1}{\xi}\mathcal{F}^{(1,-)}(U), \
f_4^{eq} = cU + \frac{1}{\xi}\mathcal{F}^{(2,+)}(U),  \
f_5^{eq} = cU  - \frac{1}{\xi}\mathcal{F}^{(2,-)}(U), \\
f_3^{eq} &= (1 - 4c)U - \frac{1}{\xi}\left[ \mathcal{F}^{(1,+)}(U) + \mathcal{F}^{(2,+)}(U) - \mathcal{F}^{(1,-)}(U) - \mathcal{F}^{(2,-)}(U) \right].
\end{aligned}
\end{equation}
% \begin{align*}
% f_1^{eq} &= cU + \frac{1}{\xi}\left( \cos(\theta)\mathcal{F}^{(1,+)} + \sin(\theta)\mathcal{F}^{(2,+)}\right) \\
% f_2^{eq} &= cU -\frac{1}{\xi}\left( \cos(\theta)\mathcal{F}^{(1,-)} + \sin(\theta)\mathcal{F}^{(2,-)}\right) \\
% f_3^{eq} &= (1 - 4c)U - \frac{1}{\xi}\left[ (\cos(\theta) + \sin(\theta))\left(\mathcal{F}^{(1,+)} + \mathcal{F}^{(2,+)} - \mathcal{F}^{(1,-)} - \mathcal{F}^{(2,-)} \right)
% \right] \\
% f_4^{eq} &= cU + \frac{1}{\xi}\left( \cos(\theta)\mathcal{F}^{(2,+)} - \sin(\theta)\mathcal{F}^{(1,-)}\right) \\
% f_5^{eq} &= cU  - \frac{1}{\xi}\left( \cos(\theta)\mathcal{F}^{(2,-)} - \sin(\theta)\mathcal{F}^{(1,+)}\right)
% \end{align*}
% %

The equilibrium distribution function sets constructed above correspond to the velocity vector set \eqref{eq: general_velocity_vectors}  aligned along the coordinate axis. In a similar fashion, equilibrium distribution function sets rotated at an angle $\theta$ relative to the x-axis can be constructed which are important for anisotropic discretizations \cite{Anandan_2024_VKLB_Upwinding_Source_Term}.

\subsection{Vector Kinetic Lattice Boltzmann Algorithm}\label{s: algorithm_VKLB}
The lattice Boltzmann equation \eqref{lattice-boltzmann-algorithm} leads to a simple collision streaming algorithm where the nonlinear collision operator is local to each lattice site \(x_i\), while the streaming operator exactly advects each distribution function \(f_q(\mathbf{x},t)\) along its corresponding discrete velocity \(\vb{v}_q\). First, the collision operator is,
\begin{equation}\label{eq: collision_operation}
f_q^{*}(\vb{x}, t) = f_q(\mathbf{x},t) - \omega\Bigl(f_q(\mathbf{x},t)-f_q^{eq}(U)\Bigr).
\end{equation}
The collision operator is then followed by the streaming operation,
\begin{equation}\label{eq: streaming_operation}
    f_q(\mathbf{x}+\mathbf{v}_q\Delta t,\; t+\Delta t) = f_q^{*}(\vb{x}, t)
\end{equation}
The conserved variables are then updated through the first discrete moment \eqref{eq: discrete-first-moment}. Boundary conditions are subsequently imposed (\autoref{ss:boundary_conditions}), and the streaming speed \(\xi\) is then updated according to either the subcharacteristic (\autoref{ss: properties}) or CFL (\autoref{ss: CFL}) constraint. The equilibrium distribution function set (\autoref{s:2} or \autoref{s:3}) is then evaluated for the next time step. This sequence defines one full time step of the vector kinetic lattice Boltzmann method and is repeated until a final time is reached. The flow chart \autoref{fig: flow_chart} summarizes the VKLB algorithm, which holds both one and two dimensions.
\begin{figure}
\begin{tikzpicture}[
  font=\scriptsize,
  box/.style={draw, rounded corners, align=left, text width=0.84\textwidth, inner sep=4pt},
  redbox/.style={box, draw=red, text=red},
  arr/.style={-Latex, thick}
]

% \node[box] (init)
% {\textbf{Initialization:} Initialize $\xi$ through CFL/subcharacteristic constraint using $U(\mathbf{x},0)$. Initialize \(f_q(\vb{x}, 0) = f^{eq}_q(U(\mathbf{x},0))\).};

\node[box] (init) {%
\textbf{Initialization:} Initialize $\xi$ through CFL/subcharacteristic
constraint using $U(\mathbf{x},0)$. (See \autoref{ss: CFL}, \autoref{ss: properties})\\
Initialize $f_q(\mathbf{x}, 0) = f_q^{eq}(U(\mathbf{x},0))$. (See \autoref{ss: equilibrium_distribution_functions}, \autoref{ss: weighted_upwinding_set})
};

\node[box, below=0.28cm of init] (time)
{\textbf{Time Step Update:} \(\Delta t = \Delta x/\xi \) .};

\node[box, below=0.28cm of time] (coll)
{\textbf{Collision:} \(f_q^{*}(\mathbf{x}_i, t^n) = f_q(\mathbf{x}_i, t^n) -\omega\left(f_q(\mathbf{x}_i,t^n) -f_q^{eq}(U_i^n)\right)\). (See Eq. \eqref{eq: collision_operation})};

\node[box, below=0.28cm of coll] (stream)
{\textbf{Streaming:} \(f_q^{n+1} \equiv f_q(\mathbf{x}_i+\mathbf{v}_q\Delta t,t^{n} + \Delta t)=f_q^*(\mathbf{x}_i,t^n)\) (See Eq. \eqref{eq: streaming_operation}). \\
Periodic boundary conditions implemented here. (See Eq.\eqref{eq: periodic_boundary_conditions})};

\node[box, below=0.28cm of stream] (mom)
{\textbf{Moment recovery:} \(U_i^{n+1}=\sum_q f_q^{n+1}\). (See Eq. \eqref{eq: discrete-first-moment})};

\node[box, below=0.28cm of mom] (bc)
{\textbf{Characteristic boundary conditions:} Impose boundary conditions for non-periodic shock problem. (See \autoref{ss:boundary_conditions})};

\node[box, below=0.28cm of bc] (sub)
{\textbf{CFL constraint:} Update $\xi$ through CFL/subcharacteristic constraint using $U_i^{n+1}$. (See  \autoref{ss: CFL}, \autoref{ss: properties})};

\node[box, below=0.28cm of sub] (eq)
{\textbf{Equilibrium update:} Compute \(f_q^{eq}(U_i^{n+1})\). (See \autoref{ss: equilibrium_distribution_functions}, \autoref{ss: weighted_upwinding_set})};

\draw[arr] (init) -- (time);
\draw[arr] (time) -- (coll);
\draw[arr] (coll) -- (stream);
\draw[arr] (stream) -- (mom);
\draw[arr] (mom) -- (bc);
\draw[arr] (bc) -- (sub);
\draw[arr] (sub) -- (eq);

% loop arrow on the right side
\draw[arr] 
  ([xshift=0.3cm]eq.east) -- 
  ([xshift=1.2cm]eq.east) --
  ([xshift=1.2cm]time.east) --
  ([xshift=0.3cm]time.east);
  
\end{tikzpicture}
\caption{Flow chart description of the VKLB algorithm.}
\label{fig: flow_chart}
\end{figure}
\section{Analysis of the Weighted Upwind Distribution Scheme}\label{s:4}
We analyze the WU-VKLB scheme associated with the equilibrium distribution functions \eqref{eq: adjusted-f1}--\eqref{eq: adjusted-f3}. We begin by examining an equivalent finite difference form of the scheme and showing consistency with the hyperbolic system \eqref{eq: hyperbolic-system}. We then turn to an entropy-based analysis to derive an upper bound for the parameter \(c\). The question of admissible constraints on \(k\) in \eqref{eq: parameterization-function} is left for future work.

\subsection{Equivalent Finite Difference and Finite Volume Formulations}
\label{sec:equivalence_fd_fv}
The lattice Boltzmann equation \eqref{lattice-boltzmann-algorithm} has an equivalent finite difference formulation, and we use this general finite difference formulation for the weighted upwind distribution function set \eqref{eq: adjusted-f1}--\eqref{eq: adjusted-f3} to interpret the role the coefficient $c$ of the conserved vector $U$ as the coefficient to a diffusive stencil. \autoref{thrm: fd_formulation} is the equivalent finite difference formulation for any $0 < \omega < 2$ (see Section 4.2 of \cite{Anandan_2024_VKLB_Upwinding_Source_Term} for the proof). 
\begin{theorem}\label{thrm: fd_formulation} (Macroscopic Finite Difference Form) Consider spacial dimension $D = 1$, let $n ,N \in \mathbb{N}$ and $0 < \omega < 2$. Assume $\xi^{n}$ satisfies a CFL constraint \eqref{eq: lattice_velocity_CFL} for each time step, $\mathbf{v}_{q}^{n}  = \left( m_q \xi^{n} \right)$ for $m_q \in \mathbb{Z}$, and $t^{n+1} - t^n \equiv \Delta t^n = \Delta x/\xi^{n}$. Let $f_{q_i}^{n+1} = f_q(x_i, t^n + \Delta t^n)$, $f_{q_{i - m_{q}}}^n = f_q(x_i - v_{q}^{1,n}\Delta t^n , t^n)$, and $f_{q_{i - m_{q}}}^{eq,n} = f_q\left(U(x_i - v_{q}^{1,n}\Delta t^n , t^n)\right)$. If $t^{n - N}$ is the initial time, then the general vector lattice Boltzmann scheme
\begin{equation}\label{general_LBM}
     f_{q_i}^{n+1} = (1 - \omega)f_{q_{i - {m_q}}}^n + \omega f_{q_{i - {m_q}}}^{eq,n}
\end{equation}
is equivalent to the following finite difference form
\begin{equation}\label{eq: general_equivalent_fd_formulation}
    f_{q_i}^{n+1} = \omega \left( \sum_{k = 0}^{N-1}(1 - \omega)^kf_{q_{i - (k+1)m_q}}^{eq, n-k}\right) + (1 - \omega)^Nf_{q_{i - (N+1)m_q}}^{eq, n - N}
\end{equation}
where $f_{q_{i - (N+1)m_q}}^{eq, n- N} = f_{q_{i - (N+1)m_q}}^{n- N}$. 
\end{theorem}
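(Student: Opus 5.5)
The plan is to unroll the one-step recursion \eqref{general_LBM} backwards in time by induction on the number of steps, using the fact that the lattice is fixed. Each velocity $v_q^{1,n} = m_q\xi^n$ together with $\Delta t^n = \Delta x/\xi^n$ gives a displacement $v_q^{1,n}\Delta t^n = m_q\Delta x$, independent of $n$. Because of this, the shift ``$i \mapsto i - m_q$'' is the same at every time level even though $\xi^n$ and $\Delta t^n$ vary. This is exactly where the hypothesis that $\xi^n$ and $\Delta t^n$ are tied by $\Delta t^n = \Delta x/\xi^n$ enters; the CFL hypothesis plays no role beyond guaranteeing that this lattice-aligned streaming is admissible.

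The inductive claim, for $1\le M\le N$, is
\begin{equation*}
f_{q_i}^{n+1} = \omega \sum_{k=0}^{M-1}(1-\omega)^k f_{q_{i-(k+1)m_q}}^{eq,n-k} + (1-\omega)^M f_{q_{i-Mm_q}}^{n-M+1}.
\end{equation*}
The base case $M=1$ is \eqref{general_LBM} itself. For the step from $M$ to $M+1$, I apply \eqref{general_LBM} at time level $n-M$ (so the new time is $n-M+1$) and at site $i-Mm_q$:
\begin{equation*}
f_{q_{i-Mm_q}}^{n-M+1} = (1-\omega) f_{q_{i-(M+1)m_q}}^{n-M} + \omega f_{q_{i-(M+1)m_q}}^{eq,n-M}.
\end{equation*}
Substituting this into the remainder term, the piece $(1-\omega)^M\omega f^{eq,n-M}_{q_{i-(M+1)m_q}}$ becomes the $k=M$ summand, and the new remainder is $(1-\omega)^{M+1} f^{n-M}_{q_{i-(M+1)m_q}}$. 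This is a routine bookkeeping check of the indices.

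Taking $M=N$ gives the remainder $(1-\omega)^N f^{n-N+1}_{q_{i-Nm_q}}$. To reach the stated form, I apply \eqref{general_LBM} once more at the initial level $t^{n-N}$ and use the initialization \eqref{eq:discrete-kinetic-equation-b}, $f_q = f_q^{eq}$ at the initial time. This gives
\begin{equation*}
f^{n-N+1}_{q_{i-Nm_q}} = (1-\omega)f^{n-N}_{q_{i-(N+1)m_q}} + \omega f^{eq,n-N}_{q_{i-(N+1)m_q}} = f^{eq,n-N}_{q_{i-(N+1)m_q}}.
\end{equation*}
This is precisely the convention $f^{eq,n-N}_{q_{i-(N+1)m_q}} = f^{n-N}_{q_{i-(N+1)m_q}}$ stated in the theorem, so the remainder becomes $(1-\omega)^N f^{eq,n-N}_{q_{i-(N+1)m_q}}$, which yields \eqref{eq: general_equivalent_fd_formulation}.

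I expect the main obstacle to be this final index alignment. One must reconcile whether the last summand is indexed $k=N-1$ or $k=N$ against the initial-data term, so that the equilibrium evaluated at $t^{n-N}$ and the initial condition are not double counted. I would handle it exactly as above, using $f_q=f_q^{eq}$ at $t^{n-N}$ to collapse the final one-step update. The restriction $0<\omega<2$ is not needed for the algebra; it only ensures $|1-\omega|<1$, so the geometric weights are damped, and I would remark on that at the end.
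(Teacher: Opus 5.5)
Your proposal is correct and follows essentially the argument the paper defers to (Section 4.2 of Anandan et al.). That argument is repeated back-substitution of the one-step update along the $n$-independent shift $v_q^{1,n}\Delta t^n = m_q\Delta x$, closed off at $t^{n-N}$ using $f_q=f_q^{eq}$, which merges the would-be $k=N$ summand and the $(1-\omega)^{N+1}$ remainder into the single term $(1-\omega)^N f^{eq,n-N}_{q_{i-(N+1)m_q}}$, exactly as you do.
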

\begin{theorem}
\label{thm:equiv_fd_omega_1}    

Let $D = 1$ and $Q = 3$ as illustrated in \autoref{fig: d1q3_lattice_structure}. Let $\omega = 1.0$ and let $\{f_q^{eq}\}_{q=1}^3$ be the weighted upwind equilibrium distribution function set \eqref{eq: adjusted-f1}--\eqref{eq: adjusted-f3}. The WU-VKLB scheme is equivalent to the following finite difference formulation,
\begin{equation}\label{eq: weighted-upwinding_fd-equivalence}
\begin{aligned}
U_i^{n+1} &= U_i^n + c\!\left(U_{i-1}^{\,n} -2\,U_{i}^{\,n} + U_{i+1}^{\,n}\right)
- \tfrac{1}{\xi}\left(\mathcal{F}_{i}^{(+,n)} - \mathcal{F}_{i-1}^{(+,n)}\right) - \tfrac{1}{\xi}\left(\mathcal{F}_{i+1}^{(-,n)} - \mathcal{F}_{i}^{(-,n)}\right)
\end{aligned}
\end{equation}
\end{theorem}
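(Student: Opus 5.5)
With $\omega = 1$ the scheme collapses to pure streaming of equilibria, so the plan is to write each $f_q$ at the new time level in terms of equilibria at the old level, sum, and regroup. We work under the convention of \autoref{fig: flow_chart}: $\xi = \xi^n$ is fixed during the step $t^n \to t^{n+1}$, and the equilibria $f_q^{eq}(U_i^n)$ are evaluated with this $\xi$. First, setting $\omega = 1$ in \eqref{general_LBM} (equivalently the case $N=0$ of \autoref{thrm: fd_formulation}) gives
\[
f_{q_i}^{n+1} = f^{eq,n}_{q_{i-m_q}}, \qquad m_1 = 1,\ m_2 = 0,\ m_3 = -1 .
\]
So $f_{1_i}^{n+1} = f_1^{eq}(U_{i-1}^n)$, $f_{2_i}^{n+1} = f_2^{eq}(U_i^n)$ and $f_{3_i}^{n+1} = f_3^{eq}(U_{i+1}^n)$. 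This step needs no appeal to the history sum in \eqref{eq: general_equivalent_fd_formulation}, because the $(1-\omega)$ factors vanish.

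Next, apply the moment recovery step $U_i^{n+1} = \sum_q f_{q_i}^{n+1}$ from \eqref{eq: discrete-first-moment}. Substituting \eqref{eq: adjusted-f1}--\eqref{eq: adjusted-f3} gives
\[
U_i^{n+1} = cU_{i-1}^n + \tfrac{1}{\xi}\mathcal{F}^{(+,n)}_{i-1} + (1-2c)U_i^n - \tfrac{1}{\xi}\bigl(\mathcal{F}^{(+,n)}_i - \mathcal{F}^{(-,n)}_i\bigr) + cU_{i+1}^n - \tfrac{1}{\xi}\mathcal{F}^{(-,n)}_{i+1}.
\]
Grouping the $U$ terms gives $U_i^n + c(U_{i-1}^n - 2U_i^n + U_{i+1}^n)$. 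The $\mathcal{F}^{(+)}$ terms combine to $-\tfrac{1}{\xi}(\mathcal{F}^{(+,n)}_i - \mathcal{F}^{(+,n)}_{i-1})$, and the $\mathcal{F}^{(-)}$ terms to $-\tfrac{1}{\xi}(\mathcal{F}^{(-,n)}_{i+1} - \mathcal{F}^{(-,n)}_i)$. This is exactly \eqref{eq: weighted-upwinding_fd-equivalence}.

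For the converse direction of ``equivalent'', note two facts. When $\omega = 1$ the distributions at each time level are completely determined by $U^n$ through the streamed equilibria. The initialization \eqref{eq:discrete-kinetic-equation-b} also places $f_q$ at equilibrium. Hence iterating the finite difference update reproduces the VKLB iterates, and the two descriptions generate the same sequence $\{U^n\}$.

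There is essentially no obstacle; the only care needed is bookkeeping. One must keep the upwind/downwind indexing of $m_q$ consistent with the velocity directions $\mathbf{v}_1 = (\xi)$ and $\mathbf{v}_3 = (-\xi)$. One must also be clear that $\xi$ is frozen within a step, since it is updated only after moment recovery.
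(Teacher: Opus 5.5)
Your proposal is correct and follows the paper's proof: with $\omega = 1$ only the $k=0$ streamed-equilibrium term of \eqref{eq: general_equivalent_fd_formulation} survives, and taking the first moment with $m_1 = 1$, $m_2 = 0$, $m_3 = -1$ and substituting \eqref{eq: adjusted-f1}--\eqref{eq: adjusted-f3} gives \eqref{eq: weighted-upwinding_fd-equivalence}. Your time level $n$ in $f^{eq,n}_{q_{i-m_q}}$ is the correct indexing (the paper's proof writes $n-1$, which appears to be a slip), and your remarks on the converse direction and on $\xi$ being frozen within a step are sound points that the paper leaves implicit.
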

\begin{proof} Since $\omega = 1.0$, the only nonzero term in the finite difference formulation \eqref{eq: general_equivalent_fd_formulation} is $k = 0$. That is,
\begin{equation}\label{eq: k_equals_zero_fd_formulation}
   f_{q,i}^{n+1} = f_{q,i - m_q}^{eq, n-1} 
\end{equation}
For the three discrete velocities $\mathbf{v}_q^n, \; q = 1,2,3$ in \autoref{fig: d1q3_lattice_structure}, it follows that $m_1 = 1, \, m_2 = 0, \,  m_3 = -1$. Taking the discrete first moment of \eqref{eq: k_equals_zero_fd_formulation}, and using the first discrete moment \eqref{eq: discrete-first-moment}, it follows that
\begin{equation}\label{eq: k_equals_zero_discrete formulation}
   U_i^{n+1} = f_{1, i-1}^{eq, n-1} + f_{2, i}^{eq, n-1} + f_{3, i+1}^{eq, n-1}
\end{equation}
Substituting in the equilibrium distribution function set \eqref{eq: adjusted-f1}--\eqref{eq: adjusted-f3} into \eqref{eq: k_equals_zero_discrete formulation} recovers \eqref{eq: weighted-upwinding_fd-equivalence}.
\end{proof}

The term $U_{i-1}^{\,n} -2\,U_{i}^{\,n} + U_{i+1}^{\,n}$ produces a diffusive stencil applied to the vector of conserved quantities, hence $c$ needs to be positive for a well posed diffusive term. 

In \cite{Anandan_2024_VKLB_Upwinding_Source_Term}, the finite difference formulation \eqref{eq: general_equivalent_fd_formulation} of the distribution function set \eqref{eq: upwinding-f1}--\eqref{eq: upwinding-f3} with $\omega = 1.0$ is used to show consistency of the scheme up to $\mathcal{O}(\Delta x)$ with respect to \eqref{eq: hyperbolic-system} (see Section 4.3 of \cite{Anandan_2024_VKLB_Upwinding_Source_Term} for details and proof). This consistency argument can be applied verbatim to the weighted upwind equilibrium distribution function set \eqref{eq: adjusted-f1}--\eqref{eq: adjusted-f3} when $c = 0$. To ensure consistency when $c > 0$, we only need to show the additional diffusive stencil is of order $\mathcal{O}(\Delta x^2)$. Let $U$ be sufficiently smooth and $c = \mathcal{O}(1)$. A Taylor series expansion of each term in $U(x_{i-1}) - 2 U(x_i) + U(x_{i+1})$ about a lattice point $x_i$ gives,
\[
\begin{aligned}
&c(U_{i-1}-2U_i+U_{i+1})  \\
&= c\Big(U_i-\Delta x\,U_x(x_i)+\frac{\Delta x^2}{2}U_{xx}(x_i)+\mathcal{O}(\Delta x^3)\Big)
-2cU_i + c\Big(U_i+\Delta x\,U_x(x_i)+\frac{\Delta x^2}{2}U_{xx}(x_i)+\mathcal{O}(\Delta x^3)\Big).\\
&=c(U_i-2U_i+U_i)
+c\big(-\Delta x\,U_x(x_i)+\Delta x\,U_x(x_i)\big)
+c\left(\frac{\Delta x^2}{2}U_{xx}(x_i)+\frac{\Delta x^2}{2}U_{xx}(x_i)\right)
+\mathcal{O}(\Delta x^3) \\
&=c\Delta x^2\,U_{xx}(x_i)+\mathcal{O}(\Delta x^3).
\end{aligned}
\]
Hence the corresponding finite difference formulation for the equilibrium distribution function set  is consistent with the hyperbolic system \eqref{eq: hyperbolic-system} up to $\mathcal{O}(\Delta x)$.

Robust numerical schemes for hyperbolic systems ensure that some type of discrete local conservation property holds within the spatial domain. In this context it is demonstrated that the finite difference discretization \eqref{eq: weighted-upwinding_fd-equivalence} can be written as an equivalent finite volume formulation with appropriately defined numerical fluxes.
\begin{theorem}\label{lem: FV}
Let $\omega = 1$ and consider the weighted upwind equilibrium distribution function set \eqref{eq: adjusted-f1}--\eqref{eq: adjusted-f3}. The finite difference formulation \eqref{eq: weighted-upwinding_fd-equivalence} can be written as a finite volume formulation in conservative form. That is,
\[
U_i^{n+1} = U_i^{n} - \tfrac{1}{\xi}\left(\mathscr{F}_{i + 1/2}^n - \mathscr{F}_{i - 1/2}^n\right),
\]
with interface flux between each cell given as
\begin{equation}\label{eq: flux_description}
\mathscr{F}_{i + 1/2}^n
= (\mathcal{F}^{(+, n)}_i + \mathcal{F}^{(-, n)}_{i+1}) - c\xi\left(U^n_{i+1} - U^n_i\right).
\end{equation}
\end{theorem}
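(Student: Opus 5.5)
The plan is to start from the finite difference form \eqref{eq: weighted-upwinding_fd-equivalence} established in \autoref{thm:equiv_fd_omega_1}. We then check by direct algebra that the proposed interface flux \eqref{eq: flux_description} reproduces it. No new analytic input is needed: the statement is an algebraic identity, and $\xi$ is constant over one time step.

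First I will write down the two interface fluxes that the conservative form needs. One is $\mathscr{F}_{i+1/2}^n$ as given in \eqref{eq: flux_description}. The other is obtained by shifting the index $i\to i-1$:
\[
\mathscr{F}_{i-1/2}^n=\bigl(\mathcal{F}_{i-1}^{(+,n)}+\mathcal{F}_{i}^{(-,n)}\bigr)-c\xi\bigl(U_i^n-U_{i-1}^n\bigr).
\]
Next I will form the difference $\mathscr{F}_{i+1/2}^n-\mathscr{F}_{i-1/2}^n$ and sort the result into three groups: the plus-flux part, the minus-flux part, and the diffusive part.

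The plus-flux part is $\mathcal{F}_i^{(+,n)}-\mathcal{F}_{i-1}^{(+,n)}$. The minus-flux part is $\mathcal{F}_{i+1}^{(-,n)}-\mathcal{F}_{i}^{(-,n)}$. The diffusive part is $-c\xi\bigl(U_{i+1}^n-U_i^n-U_i^n+U_{i-1}^n\bigr)=-c\xi\bigl(U_{i-1}^n-2U_i^n+U_{i+1}^n\bigr)$.

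Multiplying by $-\tfrac{1}{\xi}$ and adding $U_i^n$ then gives
\[
U_i^n+c\bigl(U_{i-1}^n-2U_i^n+U_{i+1}^n\bigr)-\tfrac{1}{\xi}\bigl(\mathcal{F}_i^{(+,n)}-\mathcal{F}_{i-1}^{(+,n)}\bigr)-\tfrac{1}{\xi}\bigl(\mathcal{F}_{i+1}^{(-,n)}-\mathcal{F}_{i}^{(-,n)}\bigr).
\]
This is exactly the right-hand side of \eqref{eq: weighted-upwinding_fd-equivalence}, so the two formulations agree.

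Finally, I will note why this is a genuine conservative form. The flux $\mathscr{F}_{i+1/2}^n$ depends only on the two neighbouring states $U_i^n$ and $U_{i+1}^n$, and the same expression is used for both cells sharing the interface. Consequently, summing over $i$ telescopes the interior fluxes, leaving only boundary contributions. As a sanity check, the numerical flux is consistent: when $U_i=U_{i+1}=U$ it reduces to $\mathcal{F}^{(+)}(U)+\mathcal{F}^{(-)}(U)=F(U)$.

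There is no real obstacle here. The only care needed is bookkeeping: getting the index shift in $\mathscr{F}_{i-1/2}$ right, and confirming that the sign of the $c\xi$ term produces $+c$ times the discrete Laplacian rather than $-c$ times it.
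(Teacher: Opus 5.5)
Your proof is correct and is essentially the paper's argument: the paper splits the stencil as $c(U_{i+1}^n-U_i^n)-c(U_i^n-U_{i-1}^n)$ and regroups the flux terms of \eqref{eq: weighted-upwinding_fd-equivalence}, while you verify the same algebraic identity in reverse by expanding $\mathscr{F}_{i+1/2}^n-\mathscr{F}_{i-1/2}^n$. Your bookkeeping is clean, and in fact the paper's displayed splitting has a stray factor, $c(U_i^n-cU_{i-1}^n)$, which your version avoids; your remarks on telescoping and on consistency via $\mathcal{F}^{(+)}+\mathcal{F}^{(-)}=F$ correctly justify calling the form conservative.
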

\begin{proof} For the finite difference formulation \eqref{eq: k_equals_zero_fd_formulation}, the diffusive stencil can be written as,
\[
c(U_{i-1}^n - 2U_i^n + U_{i+1}^n) = c(U_{i+1}^n - U_i^n) - c(U_i^n - cU_{i-1}^n)
\]
Then, collecting appropriate flux terms in \eqref{eq: weighted-upwinding_fd-equivalence} gives the finite volume formulation.
\end{proof}
For the partial fluxes \eqref{eq: weighted-flux-splitting}, when an eigenvalue of the system \eqref{eq: hyperbolic-system} equals zero, the numerical fluxes \eqref{eq: flux_description} are similar to that of a local lax Friedrich scheme. As discussed in \autoref{ss: weighted_upwinding_set}, the partial fluxes \eqref{eq: weighted-flux-splitting} are constructed such that $\mathcal{F}^{(\pm)}(U) \rightarrow \tfrac{1}{2}F(U)$ when an eigenvalue goes to zero. Substituting this limit in \eqref{eq: flux_description},
\begin{equation}\label{eq: LLF-numerical-flux}
\mathscr{F}_{i + 1/2}^n
=\frac{1}{2}(F^{(n)}_i + F^{(n)}_{i+1}) - c\xi\left(U^n_{i+1} - U^n_i\right),
\end{equation}
where \(\xi\) denotes the maximum wave speed, as seen in \autoref{ss: CFL}. This flux has the structure of a local Lax--Friedrichs flux (LLF) with a diffusion coefficient \(c\). Generally, \(c>0\) controls the amount of additional artificial diffusion introduced into the VKLB framework beyond the term accounted for in \autoref{subcharacteristic_condition}. Letting $\nu$ be the CFL number from \autoref{ss: CFL}, the choice \(c = \nu/2\) substituted into \eqref{eq: LLF-numerical-flux} gives the standard local Lax--Friedrichs diffusion, while \(c<\tfrac{\nu}{2}\) yields a less diffusive LLF-type flux. Additionally, \autoref{lem: FV} establishes
a discrete conservation within the numerical domain from a telescoping cancellation of the internal numerical fluxes. This conservative property is formed for the discontinuous upwind distribution function set \eqref{eq: upwinding-f1}--\eqref{eq: upwinding-f3} and weighted upwind distribution function set \eqref{eq: adjusted-f1}--\eqref{eq: adjusted-f3}.

\subsection{Estimate for bounding c}
Entropy results for VKLB schemes have been proven to ensure stability of the numerical scheme \cite{Bouchut_1999_Kinetic_Entropies, Wissocq_2024_Positive_Preserving_VKLB, Natalini_Multi_Dimensional_Systems_Discrete_Systems}. In particular, the following criteria ensures that the discrete vector kinetic system \eqref{eq: discrete-kinetic-equation} admits an H-theorem if the hyperbolic system \eqref{eq: hyperbolic-system} has a convex entropy function.

\begin{definition}[Bouchut Criteria]\label{MMD}
A set of equilibrium distribution functions \(\{ f_q^{eq}(U) \}_{q=1}^Q\) is said to be \textit{monotone non-decreasing} if for all \(q \in \{1,\dots,Q\}\), \(\partial_U f_q^{eq}(U)\) is diagonalizable with nonnegative eigenvalues.
\end{definition}
Constructing the weighted upwind distribution function set \eqref{eq: adjusted-f1}--\eqref{eq: adjusted-f3} to satisfy Definition \ref{MMD} leads to an upper bound on $c$. 
\begin{theorem}\label{thrm: bounding_c} Let $D = 1$ and $Q = 3$. Consider the hyperbolic system 
\[ \partial_tU + \partial_xF(U) = 0.\]
Let $\{f_q^{eq}\}_{q=1}^3$ be the weighted upwind equilibrium distribution function set \eqref{eq: adjusted-f1}--\eqref{eq: adjusted-f3}. Consider $\alpha(\lambda)$ and $\nu$ as defined in equations \eqref{fig:alpha-sigmoid} and \eqref{eq: lattice_velocity_CFL} respectively. Assume $F(U)$ is homogeneous of degree one and the Jacobian matrix $\partial_{U}F$ is constant. Then the set  $\{f_q^{eq}\}$ is monotone non-decreasing in the limit $k \rightarrow \infty$ only if
\begin{equation}\label{eq: bounded_c_value}
0 \leq c \leq \tfrac{1}{2}\left(1 - \nu \right).
\end{equation}
\end{theorem}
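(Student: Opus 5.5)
Under the hypotheses, the Jacobian $A=\partial_U F = P\Lambda P^{-1}$ is constant. Homogeneity of degree one gives $F(U)=AU$, so the expansion coefficients are
\[
\Phi = P^{-1}F = \Lambda P^{-1}U, \qquad \phi^j = \lambda^j w^j, \qquad W = P^{-1}U .
\]
With the weighted splitting \eqref{eq: eigenvalue-split-smooth}, the partial fluxes become $\mathcal{F}^{(+)}(U)=P\,\mathrm{diag}(\alpha(\lambda^j)\lambda^j)\,P^{-1}U$ and $\mathcal{F}^{(-)}(U)=P\,\mathrm{diag}((1-\alpha(\lambda^j))\lambda^j)\,P^{-1}U$. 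Both are linear in $U$, and their Jacobians are diagonalized by the same constant $P$.

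\textbf{Step 1: Jacobians of the equilibria.} Differentiating \eqref{eq: adjusted-f1}--\eqref{eq: adjusted-f3} gives
\[
\partial_U f_1^{eq}=P\,\mathrm{diag}\Bigl(c+\tfrac{\alpha_j\lambda^j}{\xi}\Bigr)P^{-1},\qquad
\partial_U f_3^{eq}=P\,\mathrm{diag}\Bigl(c-\tfrac{(1-\alpha_j)\lambda^j}{\xi}\Bigr)P^{-1},
\]
\[
\partial_U f_2^{eq}=P\,\mathrm{diag}\Bigl(1-2c-\tfrac{(2\alpha_j-1)\lambda^j}{\xi}\Bigr)P^{-1},
\]
where $\alpha_j=\alpha(\lambda^j/\max|\lambda|;k)$. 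Each Jacobian is therefore diagonalizable, and Definition~\ref{MMD} reduces to requiring that each of these $3p$ scalar eigenvalues be nonnegative.

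\textbf{Step 2: the limit $k\to\infty$.} As $k\to\infty$, $\alpha_j\to 1$ for $\lambda^j>0$, $\alpha_j\to 0$ for $\lambda^j<0$, and $\alpha_j=\tfrac12$ for $\lambda^j=0$. This gives $\alpha_j\lambda^j\to(\lambda^j)^+$, $(1-\alpha_j)\lambda^j\to(\lambda^j)^-$ (the negative part, $\le 0$) and $(2\alpha_j-1)\lambda^j\to|\lambda^j|$. The limiting eigenvalues are
\[
c+\tfrac{(\lambda^j)^+}{\xi},\qquad c+\tfrac{|(\lambda^j)^-|}{\xi},\qquad 1-2c-\tfrac{|\lambda^j|}{\xi}.
\]
Monotonicity in the limit means these are nonnegative for every $j$. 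I am interpreting ``monotone non-decreasing in the limit'' as nonnegativity of these limiting eigenvalues; this interpretation is the main subtlety. For finite $k$, the $\lambda^j<0$ term in $f_1^{eq}$ contributes $\alpha_j\lambda^j<0$, which could force $c>0$ rather than $c\ge0$. I therefore prefer to state the conditions at the limit and note that they are necessary.

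\textbf{Step 3: extract the bounds.}
\begin{itemize}
\item For $f_2^{eq}$, nonnegativity for all $j$ is equivalent to $1-2c\ge \max_j|\lambda^j|/\xi$. By \eqref{eq: lattice_velocity_CFL}, $\max_j|\lambda^j|/\xi=\nu$, so this gives $c\le\tfrac12(1-\nu)$.
\item For $f_1^{eq}$ and $f_3^{eq}$, consider any index with $\lambda^j=0$, or the generic situation in which some characteristic family contributes $c$ alone. In the limit, $f_1^{eq}$ restricted to the modes with $\lambda^j\le 0$ has eigenvalue exactly $c$, and similarly for $f_3^{eq}$ on the modes with $\lambda^j\ge0$. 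Nonnegativity then forces $c\ge0$.
\end{itemize}
If all eigenvalues are strictly positive, then the eigenvalues of $f_3^{eq}$ are all equal to $c$, so $c\ge0$ is still necessary; the same holds for $f_1^{eq}$ when all eigenvalues are strictly negative. Combining the two items yields \eqref{eq: bounded_c_value}.

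The main obstacle is justifying the lower bound $c\ge 0$. It requires observing that, in the $k\to\infty$ limit, at least one of $f_1^{eq}$ or $f_3^{eq}$ has an eigenvalue equal to exactly $c$. The upper bound follows directly from the $f_2^{eq}$ eigenvalues together with the CFL relation.
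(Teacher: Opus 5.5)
Your proof is correct. The upper bound is obtained exactly as in the paper. With a constant Jacobian and homogeneity, the $k\to\infty$ splitting is Steger--Warming, so $\partial_U f_2^{eq}=P\bigl((1-2c)\mathbf{I}-|\Lambda|/\xi\bigr)P^{-1}$, and the CFL relation turns $\max_j|\lambda^j|/\xi$ into $\nu$. You also correctly carry $|\lambda^j|$, where the paper's displayed inequality drops the absolute value.

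The genuine difference is the lower bound. The paper does not derive $c\ge 0$ from the Bouchut criterion at all. It imports it from the equivalent finite-difference scheme \eqref{eq: weighted-upwinding_fd-equivalence}, where $c$ multiplies the stencil $U_{i-1}-2U_i+U_{i+1}$ and must be nonnegative for a well-posed diffusive term. It then simply asserts that $f_1^{eq}$ and $f_3^{eq}$ satisfy the criterion ``directly,'' which tacitly uses $c\ge 0$. You instead extract $c\ge 0$ from monotonicity itself. In the limit, every characteristic mode contributes the eigenvalue exactly $c$ to $\partial_U f_1^{eq}$ (if $\lambda^j\le 0$) or to $\partial_U f_3^{eq}$ (if $\lambda^j\ge 0$). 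Since $p\ge 1$, at least one such eigenvalue exists.

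Your route makes the ``only if'' statement self-contained. It also closes the small gap in the paper's treatment of $f_1^{eq}$ and $f_3^{eq}$. The paper's route instead ties the lower bound to the interpretation of $c$ as a numerical diffusion coefficient at $\omega=1$. That interpretation does not rely on the homogeneity or constant-Jacobian assumptions.

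Finally, you note that nonnegativity is preserved under limits. This shows your necessary conditions hold whether ``in the limit'' means at $k=\infty$ itself or for all sufficiently large $k$.
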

\begin{proof}
The lower bound of \eqref{eq: bounded_c_value} is from the equivalent finite difference formulation \eqref{eq: weighted-upwinding_fd-equivalence} for the weighted upwind vector kinetic equilibrium distribution functions, where $c \geq 0$ for the diffusive stencil to be well-posed. For the upper bound, since $F$ is homogeneous of degree one, \(F = (J_UF) \cdot U = (P \Lambda P^{-1}) \cdot U \). Taking the limit as $k \rightarrow \infty$, \eqref{eq: eigenvalue-split-smooth} implies that the partial fluxes \eqref{eq: weighted-flux-splitting} coincide with the partial fluxes for the  discontinuous splitting \eqref{eq: eigenvalue-split-discrete}, i.e. script $\mathcal{F}^{(\pm)} = F^{(\pm)}$. The discontinuous splitting, along with the Jacobian being homogeneous of degree one, ensures that the flux splitting \eqref{eq: eigenvalue-split-discrete} (which is the same as \eqref{eq: weighted-flux-splitting} in the limit $k \rightarrow \infty)$) is equivalent to a Steger-Warming splitting. That is, if \ $\Lambda^{(\pm)}$ is a diagonal matrix of positive/negative eigenvalues from the Jacobian decomposition \eqref{eq: eigenvalue_decomposition}, then the partial fluxes \eqref{eq: discontinuous_partial_fluxes} are equivalent to $F^{(+)}(U) = (P \Lambda^{(+)} P^{-1}) \cdot U \;, F^{(-)}(U) = (P \Lambda^{(-)} P^{-1})\cdot U$. Taking the derivative with respect to $U$,  
\[
\pdv{F^{(+)}}{U} = P\Lambda^{(+)}P^{-1}, \quad \pdv{F^{(-)}}{U} = P\Lambda^{(-)}P^{-1}.
\]
For the weighted upwind distribution functions, the first and third distribution functions $f_1^{eq}, \, f_3^{eq}$ in \eqref{eq: adjusted-f1}--\eqref{eq: adjusted-f3} satisfy Definition \ref{MMD} directly. Taking the derivative of $f_2^{eq}(U)$,
\begin{align*}
    \tfrac{\partial f_2^{eq}}{\partial U} &= (1 - 2c)\vb{I} - \tfrac{1}{\xi}\left(\tfrac{\partial F^{(+)}}{\partial U} - \tfrac{\partial F^{(-)}}{\partial U} \right) \\
    &= (1 - 2c)\vb{I} - \tfrac{1}{\xi}\left(P\Lambda^{(+)}P^{-1} - P\Lambda^{(-)}P^{-1} \right) \\
    &= P\left( (1 - 2c)\vb{I} - \tfrac{|\Lambda|}{\xi} \right)P^{-1}.
\end{align*}
Written out for each component, it follows that
\[
1 - 2c - \tfrac{\lambda^j}{\xi} \geq 0, \:  \Rightarrow  \:c \leq \tfrac{1}{2}\left(1 - \tfrac{|\lambda^j|}{\xi}\right) \quad \forall j = 1, 2, \ldots, p.
\]
The minimum of the right hand side is equivalent to taking the maximum of the eigenvalues, and using \eqref{eq: lattice_velocity_CFL}, it follows that,
\[ 0 \leq c \leq  \tfrac{1}{2}\left(1 - \nu\right)\]
\end{proof}
It should be noted that the assumption that  $F(U)$ is homogeneous of degree one is not satisfied for either the shallow water system \eqref{eq: shallow_water_system} or ideal mhd system \eqref{eq: ideal_mhd_system}. However the relationship \eqref{eq: bounded_c_value} is used for all of our studies, with the heuristic reasoning that this relationship holds approximately to highest order by a Taylor series type expansion argument.

\section{Numerical Results}\label{s:5}
In this section, we assess the stability and accuracy of the weighted upwind vector distribution function set \eqref{eq: adjusted-f1}--\eqref{eq: adjusted-f3} on a sequence of hyperbolic systems of increasing complexity, from the shallow water equations, the compressible Euler equations, and ideal magnetohydrodynamics (MHD). For one-dimensional problems (\(D=1\)), we compare the weighted upwind set \eqref{eq: adjusted-f1}--\eqref{eq: adjusted-f3} against the centered flux distribution function set \eqref{eq: centered-f1}--\eqref{eq: centered-f3} and the discontinuous upwind distribution function set \eqref{eq: upwinding-f1}--\eqref{eq: upwinding-f3}. For two-dimensional problems, we compare the weighted upwind distribution function set \eqref{eq: 2d_weighted_upwind} against the centered flux distribution function set \eqref{eq: 2d_centered_flux}. Stability and accuracy is evaluated using both profile comparisons, to assess qualitative solution behavior, and verification studies, to measure quantitative order-of-accuracy or convergence rates for the VKLB methods.

All one-dimensional problems use characteristic boundary conditions, whereas all two-dimensional problems use periodic boundary conditions (\autoref{ss:boundary_conditions}). Unless otherwise stated, one-dimensional centered flux distribution function set \eqref{eq: centered-f1} -- \eqref{eq: centered-f3} uses the subcharacteristic constraint (see \autoref{ss: properties}) \(\eta = 0.90\) while the discontinuous upwind set \eqref{eq: upwinding-f1}--\eqref{eq: upwinding-f3} and weighted upwind set \eqref{eq: adjusted-f1}--\eqref{eq: adjusted-f3} use the CFL constraint (see \autoref{ss: CFL}) \(\nu = 0.90\). Hereafter, CFL constraint refers to the appropriate condition for the distribution function set under consideration. 

The weighted upwind distribution function set \eqref{eq: adjusted-f1}--\eqref{eq: adjusted-f3} also requires the diffusive parameter \(c\). Taking \(\nu = 0.90\) in \eqref{thrm: bounding_c} gives the bound \(0 \leq c \leq 0.05\), and we therefore use \(c = 0.05\) for both the one and two dimensional weighted upwind sets. This choice adds numerical diffusion to improve stability near steep gradients and discontinuities. 

For shock-dominated problems, we set the relaxation parameter \(\omega = 1.0\) for all distribution function sets, This maintains a first order behavior while providing both numerical diffusion for stability needed near discontinuities and desired theoretical properties \eqref{subcharacteristic_condition}. For smooth problems, we set \(\omega = 2.0\), which theoretically yields second order accuracy \eqref{subcharacteristic_condition}. In all cases, we take \(k = 2.0\) in \eqref{fig:alpha-sigmoid} to ensure a smooth transition when an eigenvalue changes sign. 
\subsection{Shallow Water}
We begin with the one-dimensional shallow water equations, providing a simple nonlinear vector conservation law system $(p=2)$ where Riemann problems develop shocks and rarefactions. In conservative form, the components of the system are given by
\begin{equation}\label{eq: shallow_water_system}
U=
\begin{pmatrix}
h\\
hu
\end{pmatrix},
\qquad
F(U)=
\begin{pmatrix}
hu\\[0.2em]
\displaystyle \frac{(hu)^2}{h} + \tfrac12 g h^2
\end{pmatrix}.
\end{equation}
where $h$ is the water height, $hu$ is the discharge, and $g$ is the gravitational constant. The shallow water system is equivalent to the isentropic compressible Euler equations with a particular pressure law \cite{Leveqe_Finite_Volume_Book}. The characteristic wave speeds are $u \pm \sqrt{gh}$, and the system is strictly hyperbolic whenever \(h>0\) for all time.

We first test the characteristic boundary conditions (\autoref{ss:boundary_conditions}) with the centered flux distribution function set \eqref{eq: centered-f1}--\eqref{eq: centered-f3}. We use the following smooth initial condition \eqref{fig: shallow_water_cbc} for the shallow water system \eqref{eq: shallow_water_system}. Let $\Omega = [-5, 5]$, a final time $T = 8$, an initial velocity $u = 0$, and a smooth initial height
\begin{equation}\label{eq: shallow_water_smooth_ic}
h(x)=
\begin{cases}
1 + 0.3 \left( \dfrac{1-\cos\!\bigl(\pi(x+1)\bigr)}{2} \right)^2, & x \in \Omega\cap[-1,1], \\[8pt]
1, & x \in \Omega\setminus[-1,1].
\end{cases}
\end{equation}
\begin{center}
  \includegraphics[width=0.5\columnwidth]{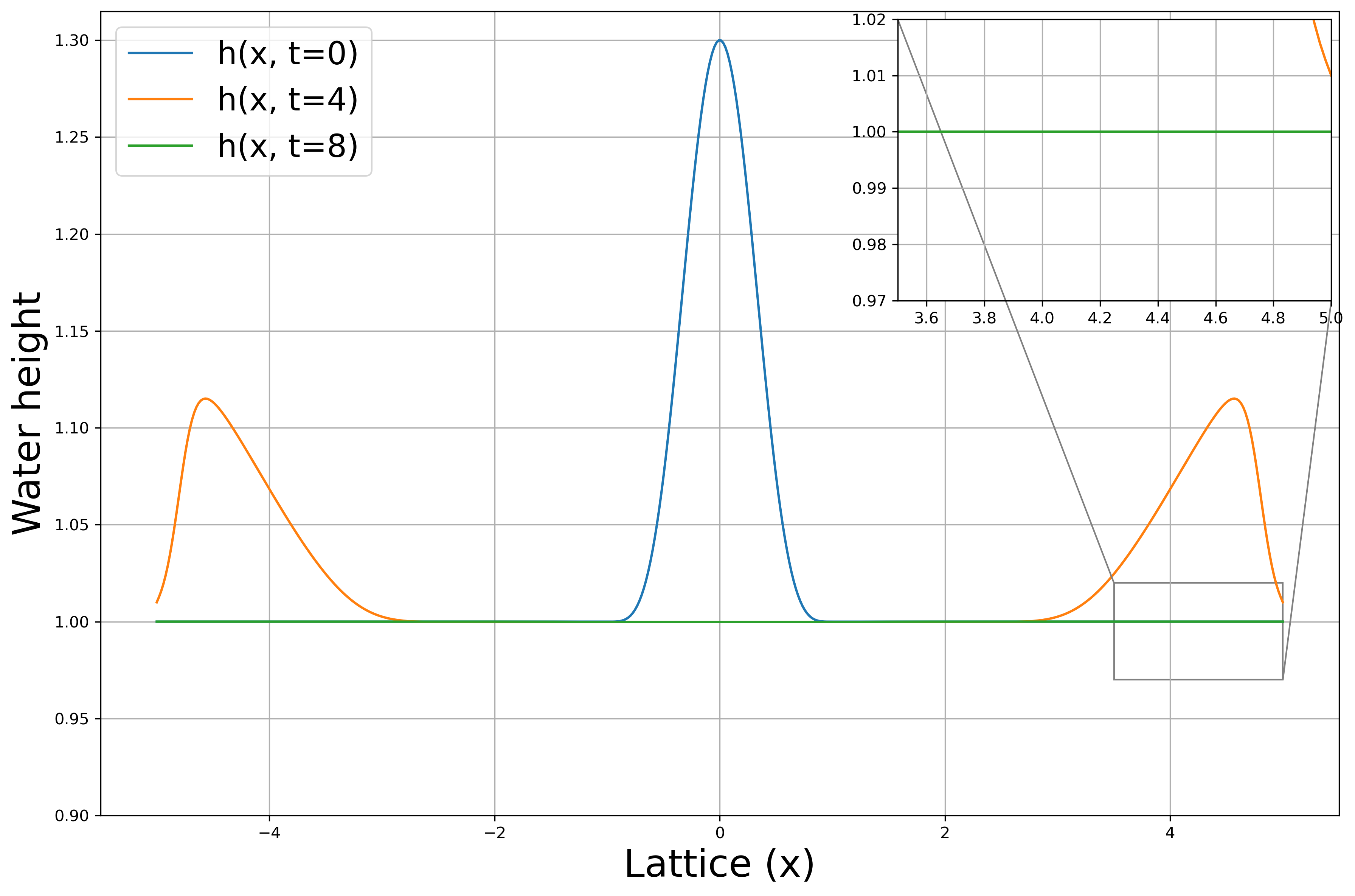}
  \captionsetup{hypcap=false}
  \captionof{figure}{Outflow boundary conditions for the shallow water system with a smooth initial condition \eqref{eq: shallow_water_smooth_ic}.}
  \label{fig: shallow_water_cbc}
\end{center}
We use $N = 512$ lattice points and a relaxation parameter $\omega = 1.0$. \autoref{fig: shallow_water_cbc} shows the numerical solution transmits through the computational boundary without any visibly reflecting waves back into the computational domain. This is important in the current setting, as Riemann problems require a non-periodic domain, and even small boundary reflections can pollute convergence measurements.

Next, we examine three dam-break Riemann problems whose initial conditions are given in \autoref{tab: shallow_water_riemann_problems}. Each  problem has distinct eigenvalue behavior. In the \textit{standard dam-break problem}, both eigenvalues retain their sign over all time. This is a baseline shock problem in which all three distribution function sets can be compared under relatively favorable conditions. In the \textit{hydraulic flow} problem, the Froude number, $Fr = u/\sqrt{gh}$, passes through unity, hence one eigenvalue changes sign within the computational domain. This transition is significant because it challenges the flux splitting \eqref{eq: eigenvalue-split-discrete} that relies on a eigenvalue decomposition \eqref{eq: eigenvalue_decomposition}. In the \textit{compression flow} problem, one eigenvalue is initialized zero, placing the solution at the transition point between positive and negative components in \eqref{eq: eigenvalue-split-discrete}.

For the standard dam-break problem, all methods exhibit the expected convergence rate, which is slightly below first order, as seen in \autoref{fig: standard-flow-shallow-water}. Furthermore, he weighted upwind set \eqref{eq: adjusted-f1}--\eqref{eq: adjusted-f3} is the most accurate, capturing the right edge of the rarefaction more precisely than both the centered flux set \eqref{eq: centered-f1}--\eqref{eq: centered-f3} and the discontinuous upwind set \eqref{eq: upwinding-f1}--\eqref{eq: upwinding-f3}.

For the hydraulic flow problem, \autoref{fig: hydraulic-flow-shallow-water} demonstrates the convergence rates are again slightly below first order. \autoref{fig: height-sw-hydraulic-flow-quantity} shows the discontinuous upwind set \eqref{eq: upwinding-f1}--\eqref{eq: upwinding-f3} develops instabilities along the rarefaction as the Froude number crosses unity, and these instabilities persist for all tested CFL numbers, \(\nu \in \{0.1, 0.2, \ldots, 0.9\}\). In contrast, the weighted upwind set \eqref{eq: adjusted-f1}--\eqref{eq: adjusted-f3} suppresses these instabilities and more accurately resolves both the rarefaction and the shock than the centered flux set \eqref{eq: centered-f1}--\eqref{eq: centered-f3}. Overall, the weighted upwind set provides the best accuracy for the hydraulic flow case (\autoref{fig: hydraulic-flow-shallow-water}).

The compression flow problem involves two waves colliding. The initial conditions in \autoref{tab: shallow_water_riemann_problems} ensure one of the eigenvalues is initialized to zero. \autoref{fig: compression-flow-shallow-water} shows the discontinuous upwind set \eqref{eq: upwinding-f1}--\eqref{eq: upwinding-f3} develops instabilities along the right shock for all tested CFL numbers, \(\nu \in \{0.1, 0.2, \ldots, 0.9\}\). The weighted upwind set \eqref{eq: adjusted-f1}--\eqref{eq: adjusted-f3} again gives higher accuracy than the centered flux set \eqref{eq: centered-f1}--\eqref{eq: centered-f3} while smoothing the right-shock instabilities developed with the discontinuous upwind set.
\begin{table}
    \renewcommand{\arraystretch}{1.15}
    \setlength{\tabcolsep}{6pt}
    \begin{tabular}{>{\raggedright}p{4.0cm}cccc}
        \toprule
        \textbf{Parameter} 
        & \textbf{Standard Dam Break}
        & \textbf{Hydraulic Flow} 
        & \textbf{Compression Flow}  \\
        \midrule
        Spatial domain 
        & $[-5.0, 5.0]$ 
        & $[-20.0,\,20.0]$ 
        & $[-2.0,\,2.0]$ \\

        Time interval [s] 
        & $[0.0,\,2.0]$
        & $[0.0,\,4.0]$ 
        & $[0.0,\,0.5]$ \\

        Initial discontinuity $x_0$ 
        & $0.0$
        & $0.0$ 
        & $0.0$ \\

        Height $(h_l,\,h_r)$ 
        & $(3.0, \; 1.0)$
        & $(10.0,\;1.0)$ 
        & $(1.0,\;1.0)$ \\

        Discharge $((hu)_l,\,(hu)_r)$ 
        & $(0.0, \; 0.0)$
        & $(0.0,\;0.0)$ 
        & $(1.0,\;-1.0)$ \\

        Gravitational Constant $g$ 
        & $1.0$
        & $1.0$ 
        & $1.0$ \\
        \bottomrule
    \end{tabular}
    \caption{Initial conditions for shallow water Riemann problems.}
    \label{tab: shallow_water_riemann_problems}
\end{table}

\begin{figure}
  \centering
  \begin{subfigure}[b]{0.30\textwidth}
    \includegraphics[width=\linewidth]{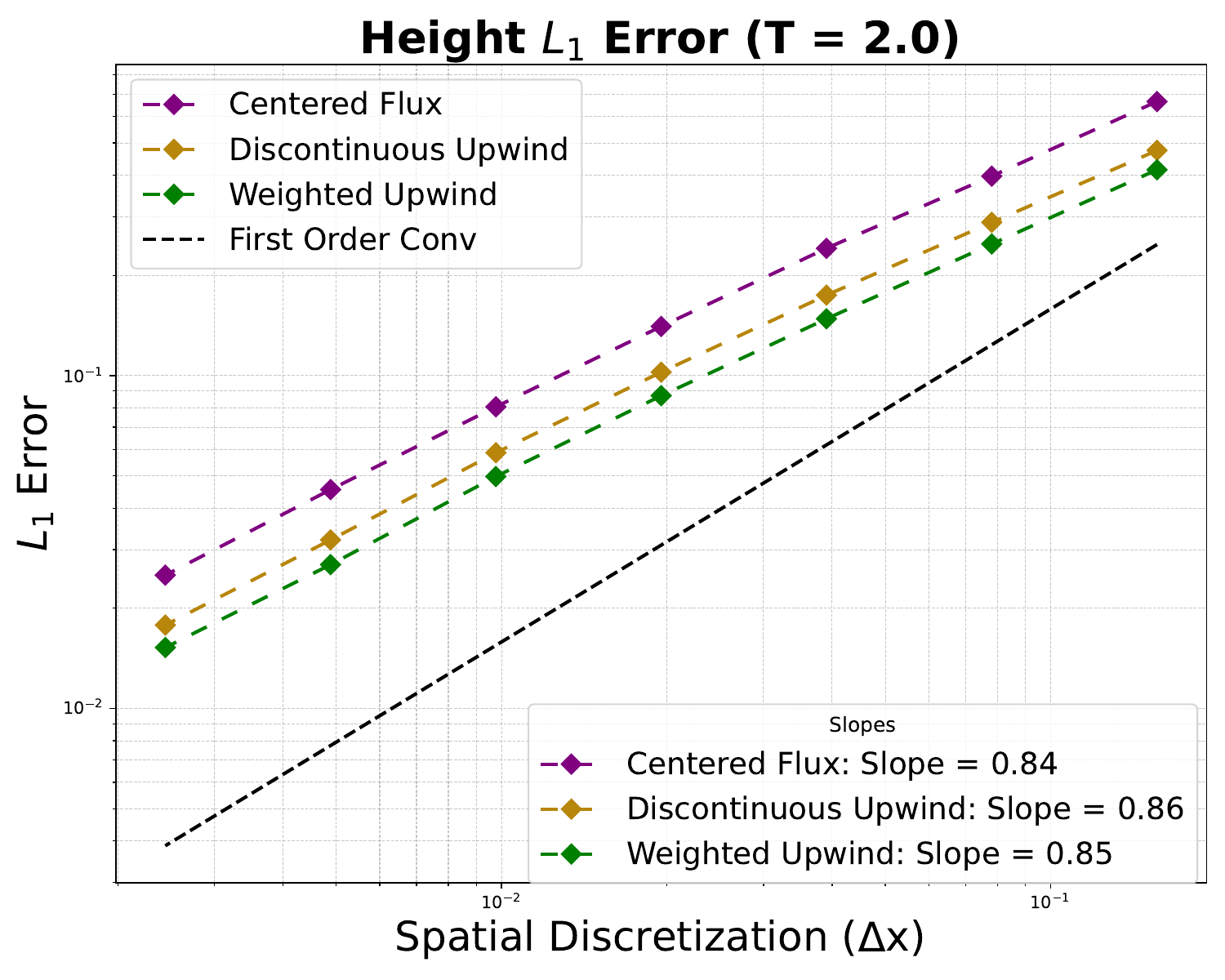}
    \caption{Height}
    \label{fig: height-sw-standard-damn-break-conv}
  \end{subfigure}
  \hfill
  \begin{subfigure}[b]{0.30\textwidth}
    \includegraphics[width=\linewidth]{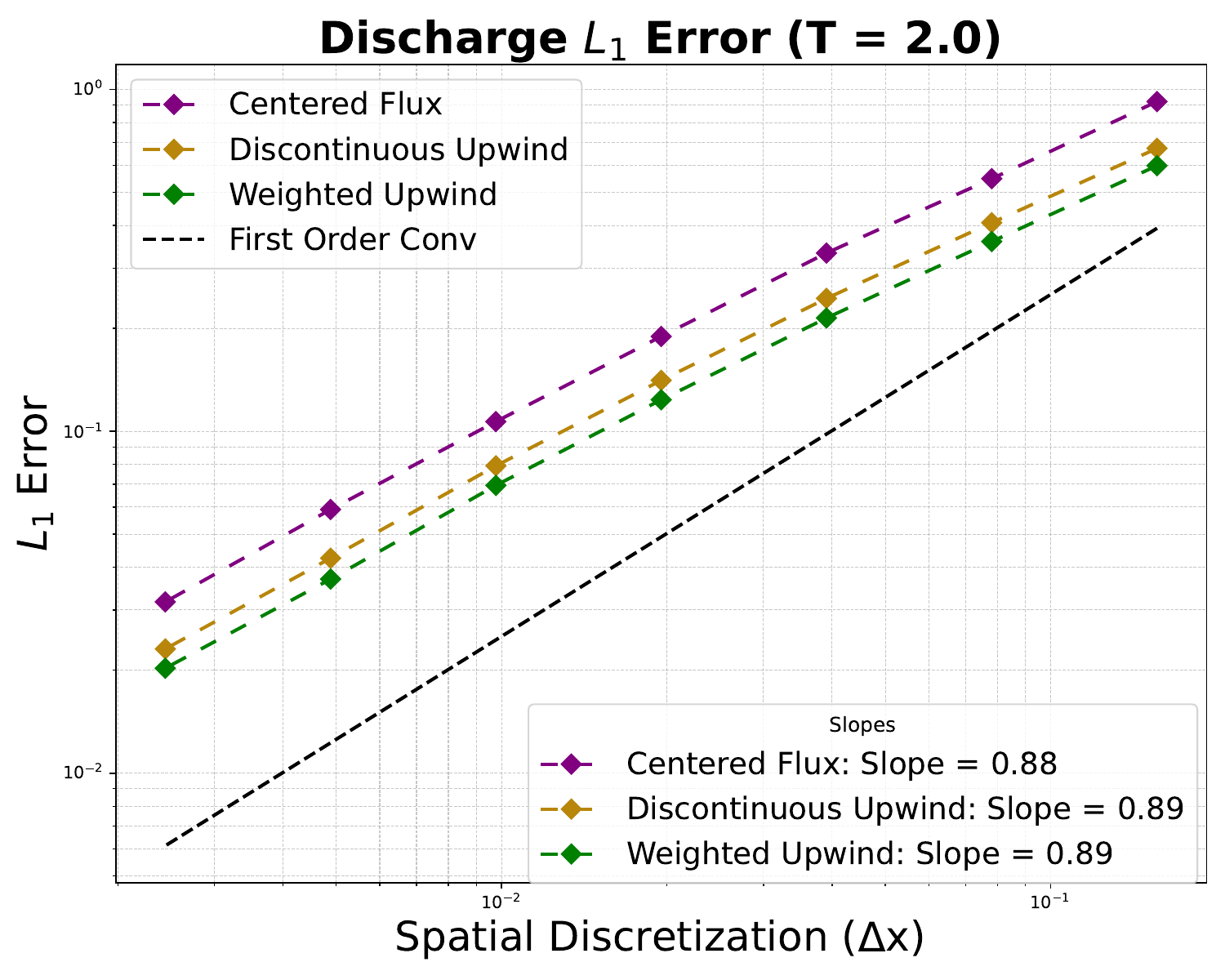}
    \caption{Discharge}
    \label{fig: discharge-sw-standard-damn-break-conv}
  \end{subfigure}
  \hfill
  \begin{subfigure}[b]{0.30\textwidth}
    \includegraphics[width=\linewidth]{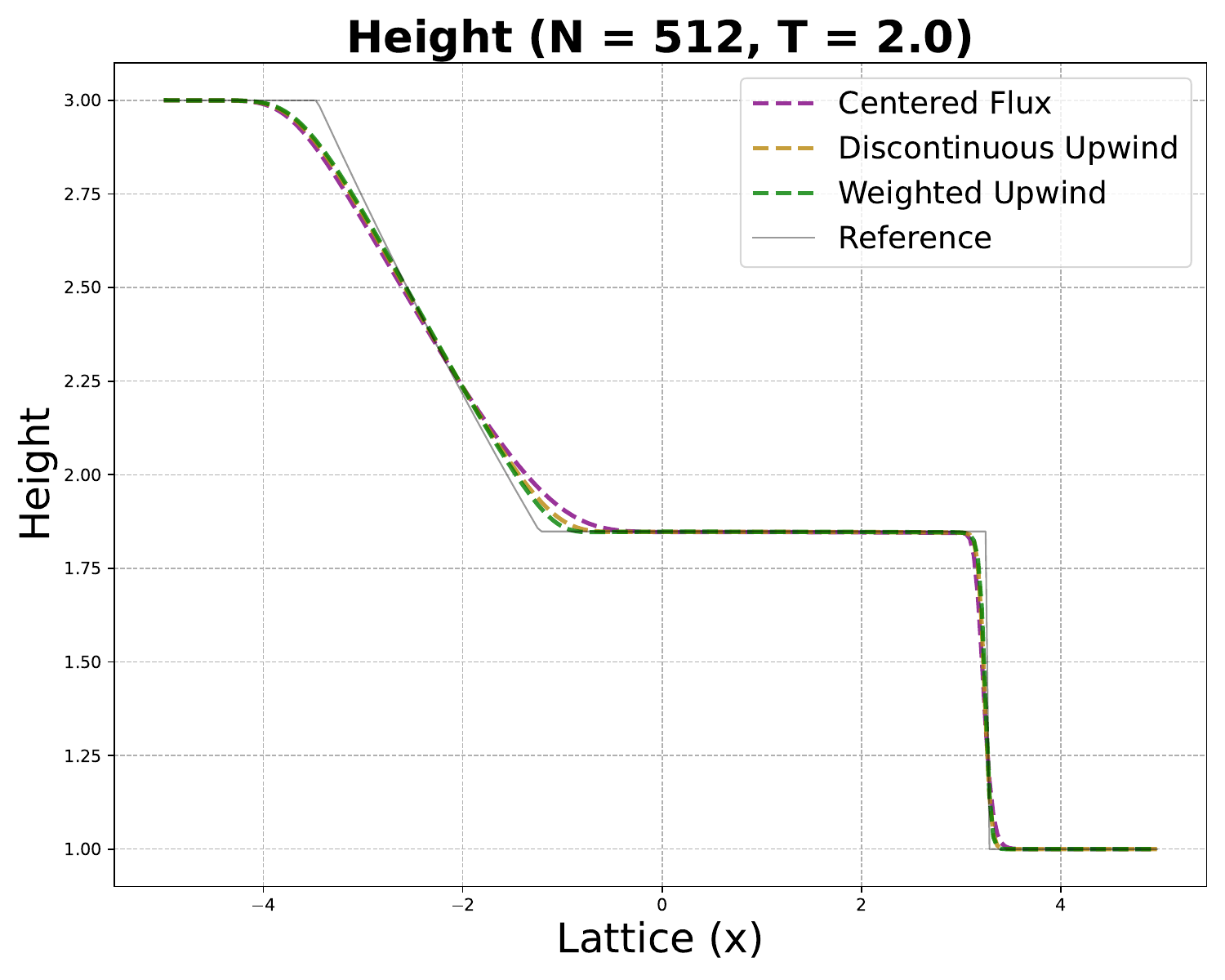}
    \caption{Density}
    \label{fig: height-sw-standard-damn-break-quantity}
  \end{subfigure}
  \caption{Standard dam-break problem (\autoref{tab: shallow_water_riemann_problems}) using discrete lattice points \(N=[64,128,256,\ldots,4096]\), \(\text{CFL}=0.90\), and \(\omega=1.0\).}
  \label{fig: standard-flow-shallow-water}
\end{figure}
\begin{figure}
  \centering
  \begin{subfigure}[b]{0.30\textwidth}
    \includegraphics[width=\linewidth]{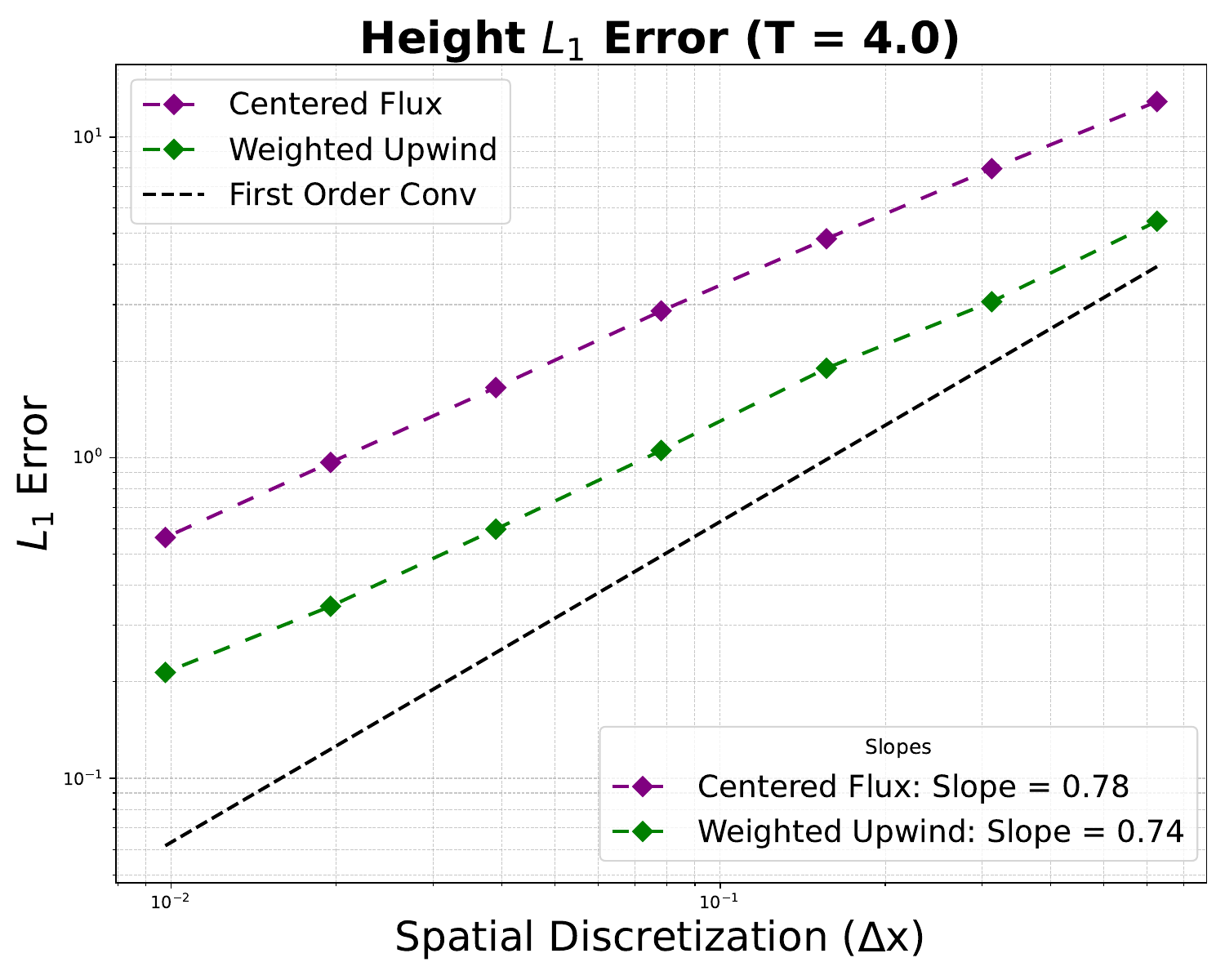}
    \caption{Height}
    \label{fig: height-sw-hydraulic-flow-conv}
  \end{subfigure}
  \hfill
  \begin{subfigure}[b]{0.30\textwidth}
    \includegraphics[width=\linewidth]{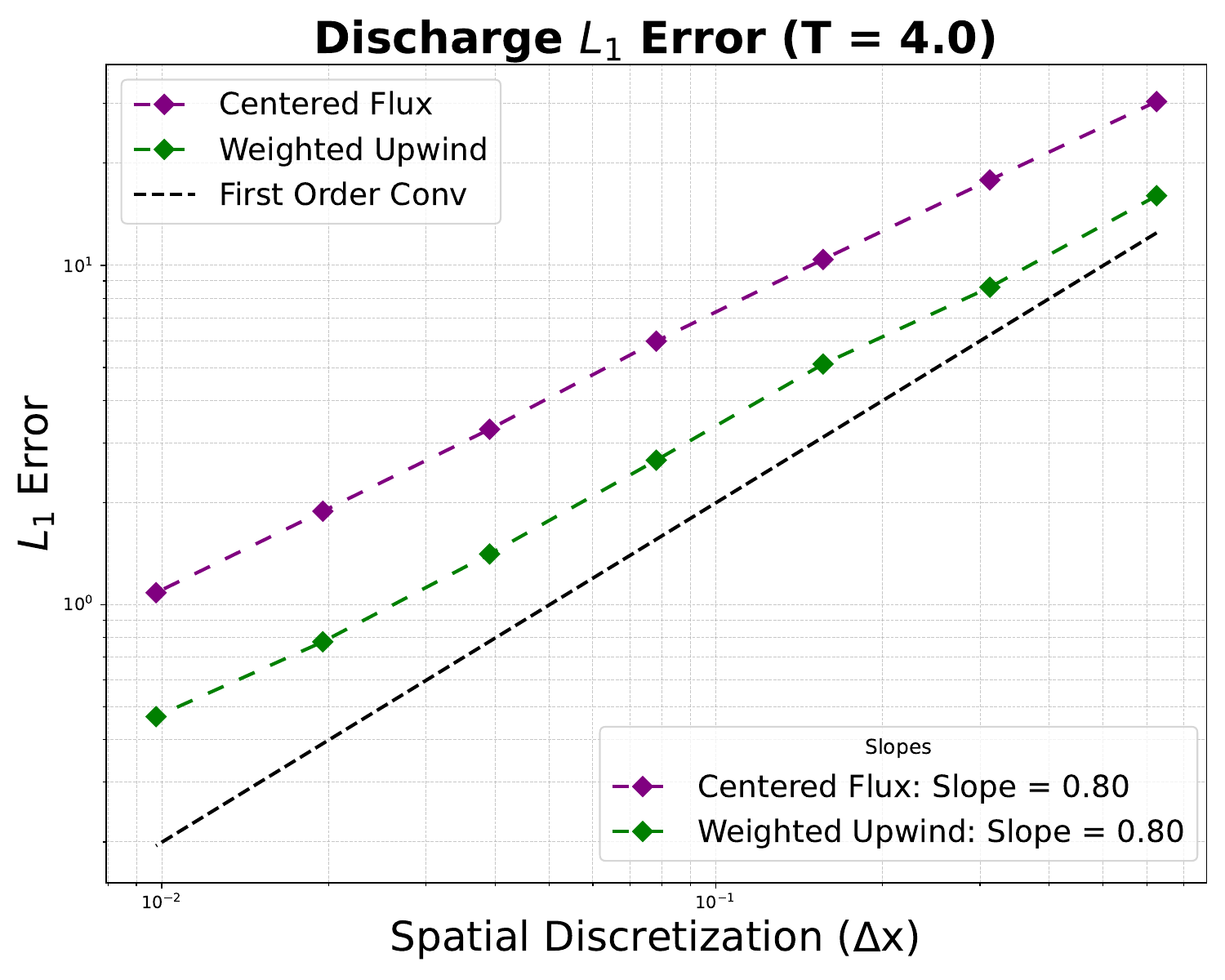}
    \caption{Discharge}
    \label{fig: discharge-sw-hydraulic-flow-conv}
  \end{subfigure}
  \hfill
  \begin{subfigure}[b]{0.30\textwidth}
    \includegraphics[width=\linewidth]{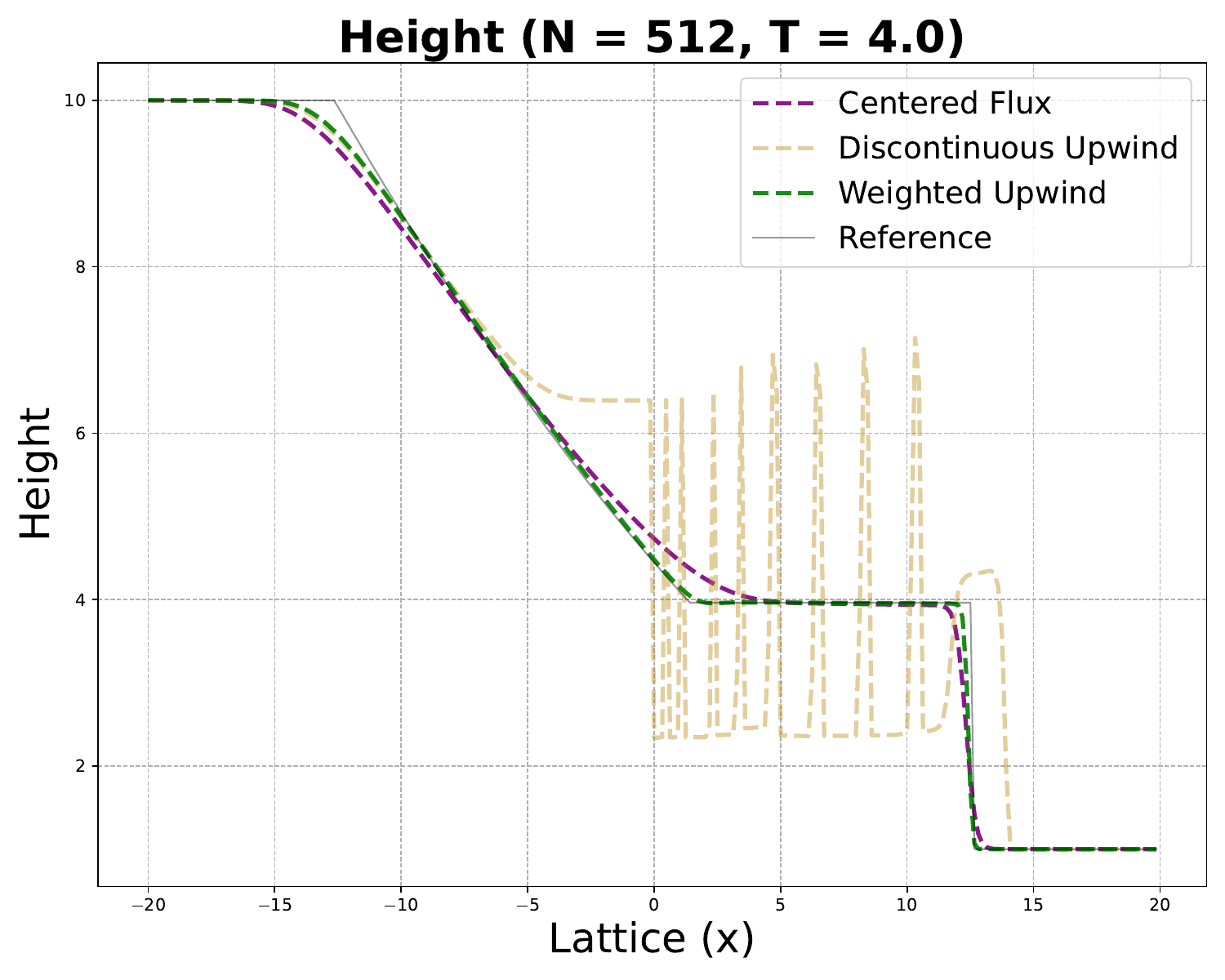}
    \caption{Height}
    \label{fig: height-sw-hydraulic-flow-quantity}
  \end{subfigure}
  \caption{Hydraulic flow (\autoref{tab: shallow_water_riemann_problems}) using discrete lattice points \(N=[64,128,256,\ldots,4096]\), \(\text{CFL}=0.80\), and \(\omega=1.0\).}
  \label{fig: hydraulic-flow-shallow-water}
\end{figure}
\begin{figure}
  \centering
  \begin{subfigure}[b]{0.30\textwidth}
    \includegraphics[width=\linewidth]{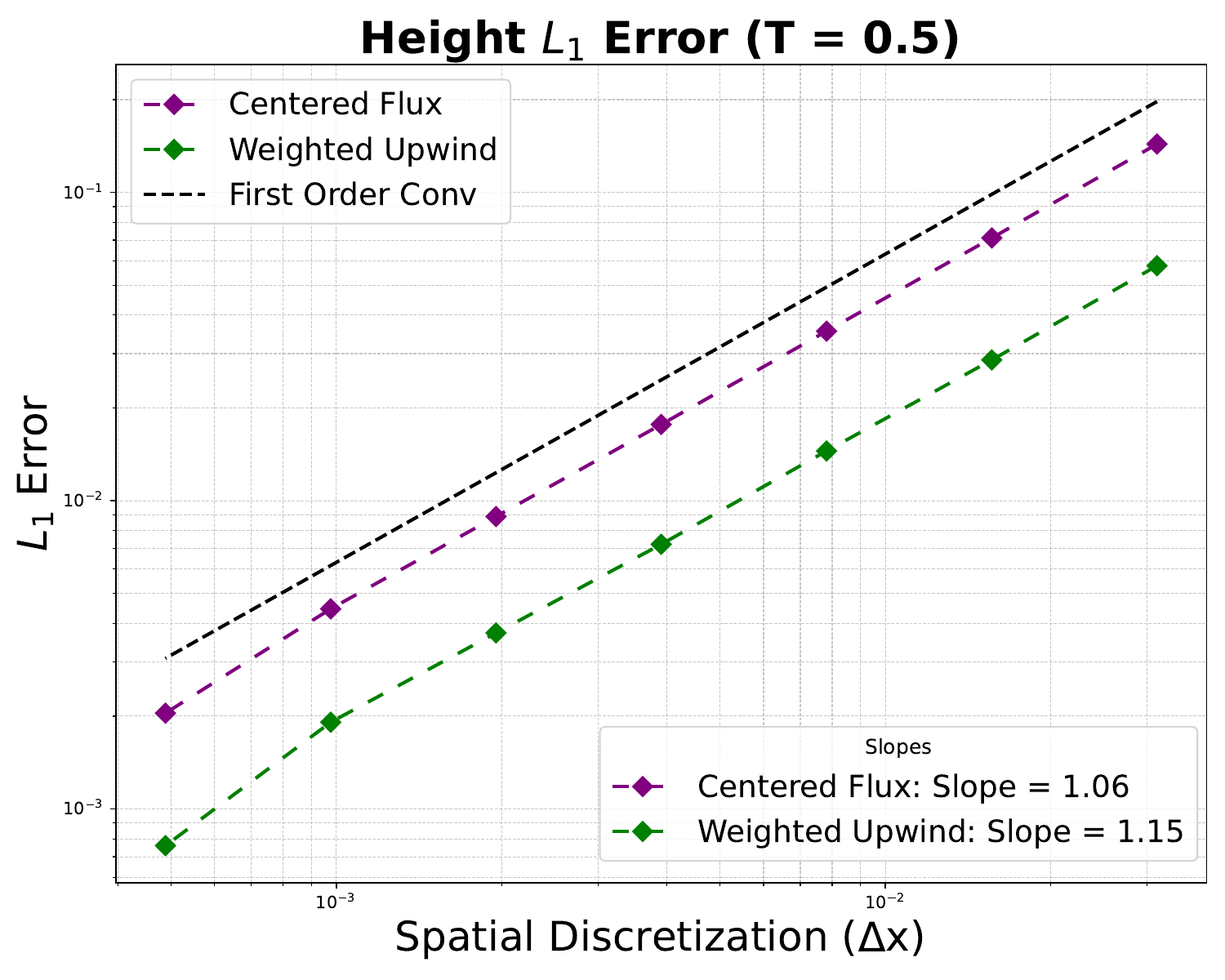}
    \caption{Height}
    \label{fig: height-sw-compression-flow-conv}
  \end{subfigure}
  \hfill
  \begin{subfigure}[b]{0.30\textwidth}
    \includegraphics[width=\linewidth]{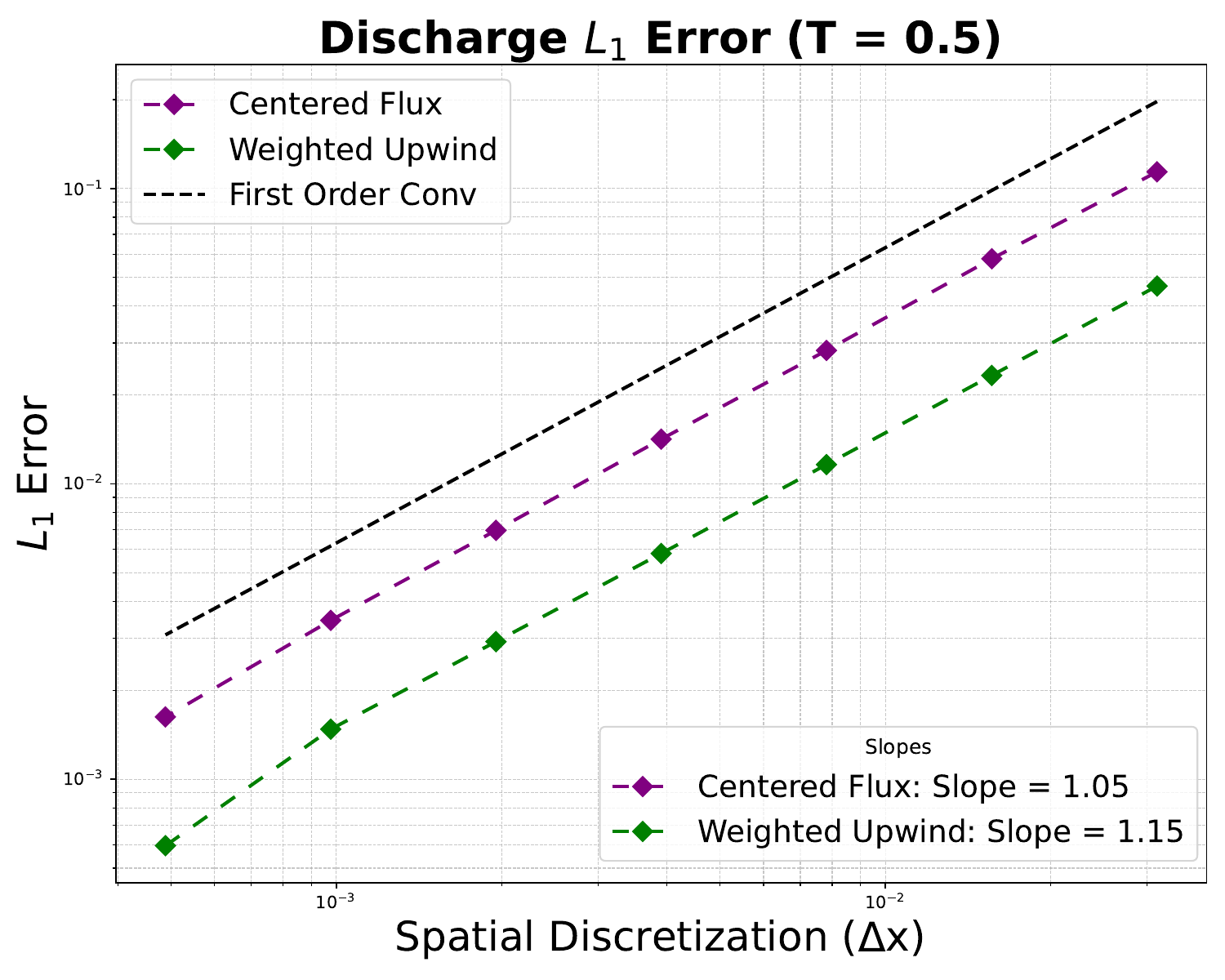}
    \caption{Discharge}
    \label{fig: discharge-sw-compression-flow-conv}
  \end{subfigure}
  \hfill
  \begin{subfigure}[b]{0.30\textwidth}
    \includegraphics[width=\linewidth]{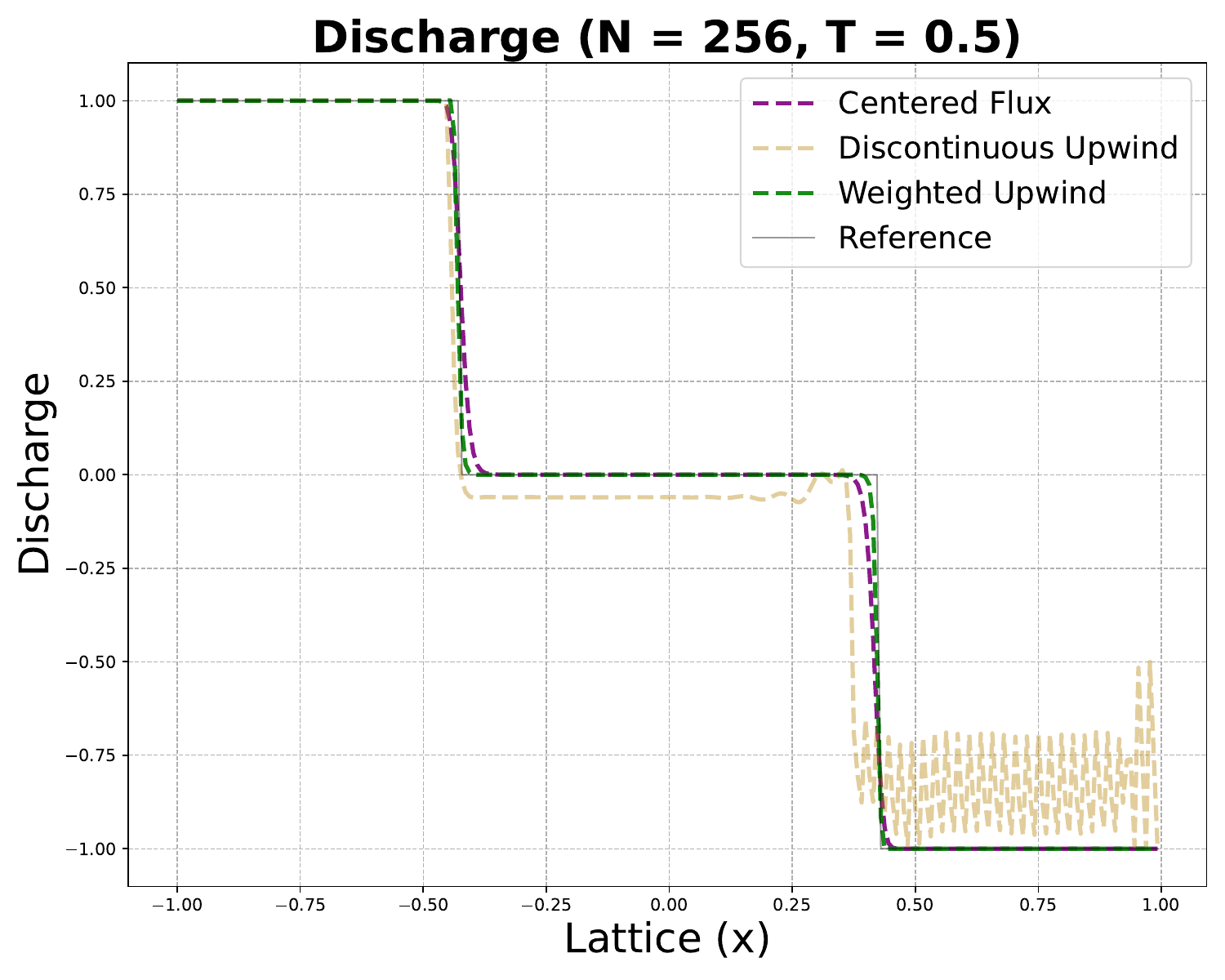}
    \caption{Discharge}
    \label{fig: discharge-sw-compression-flow-quantity}
  \end{subfigure}
  \caption{Compression flow (\autoref{tab: shallow_water_riemann_problems}) using discrete lattice points \(N=[64,128,256,\ldots,4096]\), \(\text{CFL}=0.90\), and \(\omega=1.0\).}
  \label{fig: compression-flow-shallow-water}
\end{figure}
\subsection{Euler Gas Dynamics}\label{ss: euler_gas_dynamics}
Having established that the weighted upwind distribution function set \eqref{eq: adjusted-f1}--\eqref{eq: adjusted-f3} is the most accurate of the three three equilibrium distribution function sets while maintaining equivalent stability for the shallow water system, we now investigate whether these advantages extend to the compressible Euler equations. These equations describe the conservation of mass, momentum, and total energy in an inviscid compressible fluid and provides a demanding set of test problems. In conservative form, the Euler system has components 
\begin{equation}\label{eq: euler_system_of_equations}
U=
\begin{pmatrix}
\rho\\
\rho \vb{u}\\
E
\end{pmatrix},
\qquad
\vb{F}(U)=
\begin{pmatrix}
\rho \vb{u}\\
\rho \vb{u}\otimes \vb{u} + p\vb{I}\\
\vb{u}(E+p)
\end{pmatrix}.
\end{equation}
The system is closed through the ideal gas equations of state,
\[
E=\frac{p}{\gamma-1}+\tfrac12\,\rho\,(\vb{u}\cdot \vb{u}),
\]
Here, $\rho$ is the density, $\vb{u}=(u,v)$ is the macroscopic fluid velocity, $E$ is the total energy, $p$ is pressure, and $\gamma$ is the adiabatic index. The Mach number is defined as,
\[
M=\frac{\lVert\mathbf{u}\rVert}{c}, \quad
c=\sqrt{\frac{\gamma p}{\rho}}.
\]
where $c$ is the speed of sound. When $M < 1$, the flow is subsonic while $M >1$ is a supersonic flow. A \textit{sonic point} occurs when $M = 1$. For the Euler system \eqref{eq: euler_system_of_equations}, we consider smooth problems in $2D$ to test for second order accuracy and Riemann problems in $1D$ to how well different discontinuities are resolved. 

The first smooth problem we consider is a \textit{sinusoidal entropy wave} \cite{Wissocq_2024_Positive_Preserving_VKLB}. The initial perturbation to the density is a transverse density wave aligned along \(x=y\). The initial conditions take the form,
\begin{equation}\label{eq: sinusoidal_solution_euler}
  \rho(x,y,0) = 1 + 0.1\,\sin\bigl(2\pi\,(x+y)\bigr),
  \quad
  u(x,y,0) = 1,
  \quad
  v(x,y,0) = 1,
  \quad
  p(x,y,0) = 1.
\end{equation}
The analytic solution at time \(t\) is then given by
\begin{equation}\label{eq: exact_solution_euler_sinusoidal_solution}
  \rho(x,y,t) = 1 + 0.1\,\sin\bigl(2\pi\,[\,x+y - t\,(u+v)\,]\bigr),
  \quad
  u(x,y,t)=1,
  \quad
  v(x,y,t)=1,
  \quad
  p(x,y,t)=1.
\end{equation}
One important aspect of \eqref{eq: exact_solution_euler_sinusoidal_solution} is that, along each coordinate direction, each eigenvalue of the Jacobian of \eqref{eq: euler_system_of_equations} remains a fixed sign through the domain for all time. There are no sonic points in this problem, as the minimum mach number $M \approx 1.13$ while the maximum is $M \approx 1.25$. \autoref{fig: smooth-solution-euler-convergence} illustrates the centered flux distribution function set \eqref{eq: centered-f1} -- \eqref{eq: centered-f3} and the weighted upwind distribution function set \eqref{eq: adjusted-f1}--\eqref{eq: adjusted-f3} recover second order convergence, with the weighted upwind set having nearly an order of magnitude higher accuracy. The discontinuous upwind distribution function set \eqref{eq: upwinding-f1} -- \eqref{eq: upwinding-f3} was unstable for this problem for all discretizations and CFL constraints, \autoref{ss: CFL}, ranging within \( \nu \in [0.05, 0.10, \ldots, 0.30] \). Hence, no convergence study was conducted for the discontinuous upwind set. 

The second smooth problem we consider is the \textit{vortex evolution problem} \cite{Chu_2025_Euler_Flux_Splitting}. This problem examines a smooth vortex whose center is given at \((x_c,y_c)\) (for us \((x_c, y_c) = (0,0)\)). Let
\[
  r^2 = (x-x_c)^2 + (y-y_c)^2,
  \quad
  T(x,y) \;=\;
  1 \;-\;\frac{(\gamma-1)\,\varepsilon^2}{8\,\gamma\,\pi^2}\,
    e^{\,1 - r^2}\,.
\]
The initial state is given by
\begin{equation}\label{eq: euler_smooth_vortex}
  \rho(x,y,0) = T^{1/(\gamma-1)}, 
  \;
  u(x,y,0) = 1 \;-\;\frac{\varepsilon}{2\pi}\,e^{\tfrac{1-r^2}{2}}\,\bigl(y-y_c\bigr),
  \;
  v(x,y,0) = 1 \;+\;\frac{\varepsilon}{2\pi}\,e^{\tfrac{1-r^2}{2}}\,\bigl(x-x_c\bigr),
  \;
  p(x,y,0) = \rho(x,y,0)^{\,\gamma}.
\end{equation}
The problem \eqref{eq: euler_smooth_vortex} admits an exact solution where the vortex is convected diagonally with velocity \((u,v)=(1,1)\). For each coordinate direction, an eigenvalue of the corresponding Jacobian \eqref{eq: euler_system_of_equations} change sign within the computational domain. The flow also undergoes a transition between subsonic and supersonic regimes. The minimum Mach number is approximately \(M \approx 0.54\) while the maximum is approximately \(M \approx 1.97\). \autoref{fig: smooth-vortex-euler-convergence} illustrates the centered flux distribution function set \eqref{eq: centered-f1}--\eqref{eq: centered-f3} and the weighted upwind distribution function set \eqref{eq: adjusted-f1}--\eqref{eq: adjusted-f3} recover second order convergence, with the weighted upwind set having a higher accuracy than the centered flux set. The discontinuous upwind set \eqref{eq: upwinding-f1}--\eqref{eq: upwinding-f3} was unstable for all discretization sizes $N$ and CFL \(\nu\) in the range \( \nu \in [0.05, 0.10, \ldots, 0.30] \). Hence, no convergence study was conducted for the discontinuous upwind set. 

Next, we examine the one-dimensional Riemann problems in \autoref{tab: euler-riemann-problems}. The Euler system \eqref{eq: euler_system_of_equations} is more complex than the shallow water system, as contact discontinuities can form along with shocks and rarefactions. Riemann problems for the one-dimension Euler system has analytical solutions \cite{Toro_2009_Riemann_Solvers} that allow convergence studies for all three distribution function sets.  

The Sod shock tube problem is a standard test problems for numerical schemes \cite{Toro_2009_Riemann_Solvers}. There are no change in signs of the eigenvalues within the computational domain for all times. \autoref{fig: sod-shock-tube} illustrates all three distribution function sets being stable, with the weighted upwind set \eqref{eq: adjusted-f1}--\eqref{eq: adjusted-f3} having the highest accuracy. 

The initial conditions for the Leblanc shock tube has an eight orders of magnitude difference in energy across the discontinuity (see caption of \autoref{tab: euler-riemann-problems}). In this problem, there is a change in sign of the eigenvalue along the rarefraction. \autoref{fig: leblanc-shock-tube} illustrates discontinuous upwind set \eqref{eq: upwinding-f1}--\eqref{eq: upwinding-f3} experiences an instability, where a discontinuity is forming at the eigenvalue transition. The upwind distribution function set \eqref{eq: adjusted-f1}--\eqref{eq: adjusted-f3} smoothened out this instability, and also had the highest accuracy of all three distribution function sets. 

The sonic fixed point problem includes a nonzero initial velocity, resulting in a change in eigenvalue sign along the rarefaction. \autoref{fig: fixed-sonic-point} illustrates the discontinuous upwind set \eqref{eq: upwinding-f1}--\eqref{eq: upwinding-f3} experiences an instability, where a discontinuity is forming at the eigenvalue transition. Like the previous Euler Riemann problems, the weighted upwind set \eqref{eq: adjusted-f1}--\eqref{eq: adjusted-f3} smoothened out the instabilities, and most accurately resolved the rarefaction, contact, and shock discontinuities. 
\begin{table}[htbp]
    \centering
    \renewcommand{\arraystretch}{1.15}
    \setlength{\tabcolsep}{8pt}
    \begin{tabular}{>{\raggedright}p{4.0cm}cccc}
        \toprule
        \textbf{Parameter} 
        & \textbf{Sod} 
        & \textbf{Leblanc} 
        & \textbf{Sonic}  \\
        \midrule
        Spatial domain 
        & $[-0.5,\,0.5]$ 
        & $[0,\,9.0]$ 
        & $[-1,\,1]$ \\

        Time interval [s] 
        & $[0.0,\,0.2]$ 
        & $[0.0,\,6.0]$ 
        & $[0.0,\,0.5]$ \\

        Initial discontinuity $x_0$ 
        & $0.0$ 
        & $3.0$ 
        & $-0.4$  \\

        Density $(\rho_l,\,\rho_r)$ 
        & $(1.0,\;0.125)$ 
        & $(1.0,\;10^{-3})$ 
        & $(1.0,\;0.125)$  \\

        Velocity $(u_l,\,u_r)$ 
        & $(0.0,\;0.0)$ 
        & $(0.0,\;0.0)$ 
        & $(0.75,\;0.75)$  \\

        Pressure $(p_l,\,p_r)$ 
        & $(1.0,\;0.1)$ 
        & $(6.667\times10^{-2},\;6.667\times10^{-9})$
        & $(1.0,\;0.1)$ \\

        Adiabatic index $\gamma$ 
        & $1.4$ 
        & $\tfrac{5}{3}$ 
        & $1.4$  \\
        \bottomrule
    \end{tabular}
    \caption{Initial conditions for standard 1D Euler Riemann problems.}
    \label{tab: euler-riemann-problems}
\end{table} 
\begin{figure}
  \centering
  \begin{subfigure}[b]{0.30\textwidth}
    \includegraphics[width=\linewidth]{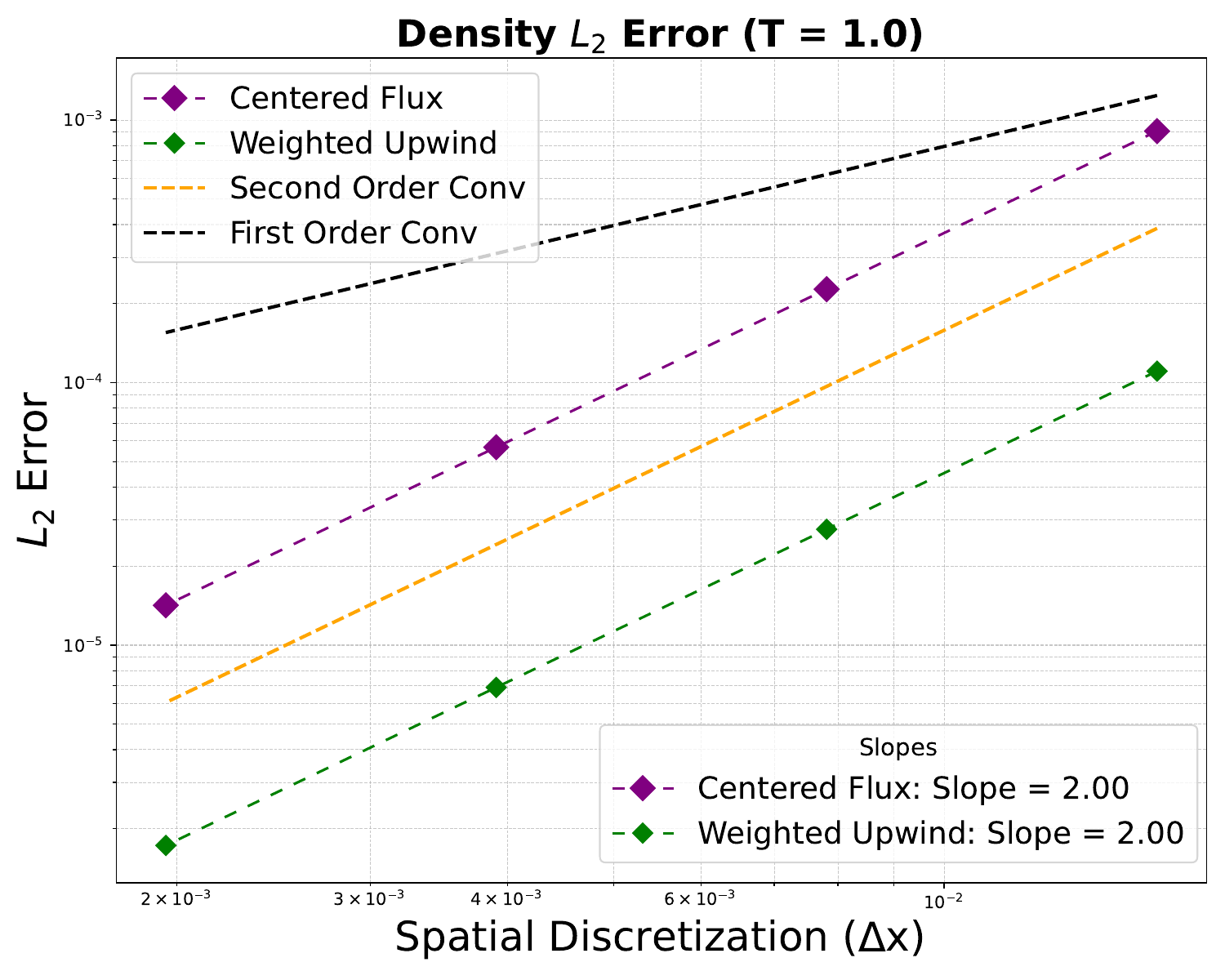}
    \caption{Density L2-error}
    \label{fig: density-euler-smooth-solution}
  \end{subfigure}
  \hfill
  \begin{subfigure}[b]{0.30\textwidth}
    \includegraphics[width=\linewidth]{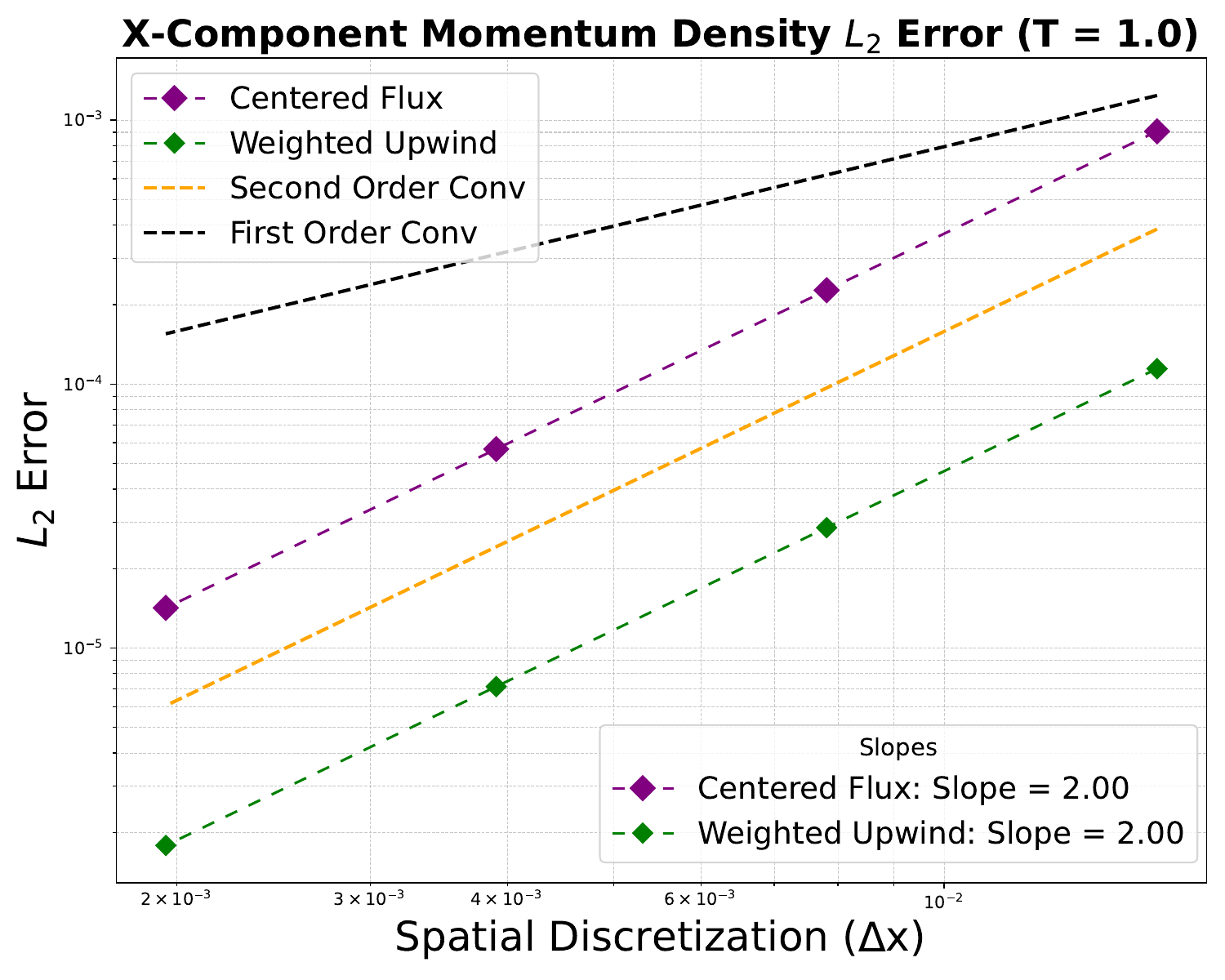}
    \caption{Momentum $\rho u_x$ L2-error}
    \label{fig: momentum-density-euler-smooth-solution}
  \end{subfigure}
  \hfill
  \begin{subfigure}[b]{0.30\textwidth}
    \includegraphics[width=\linewidth]{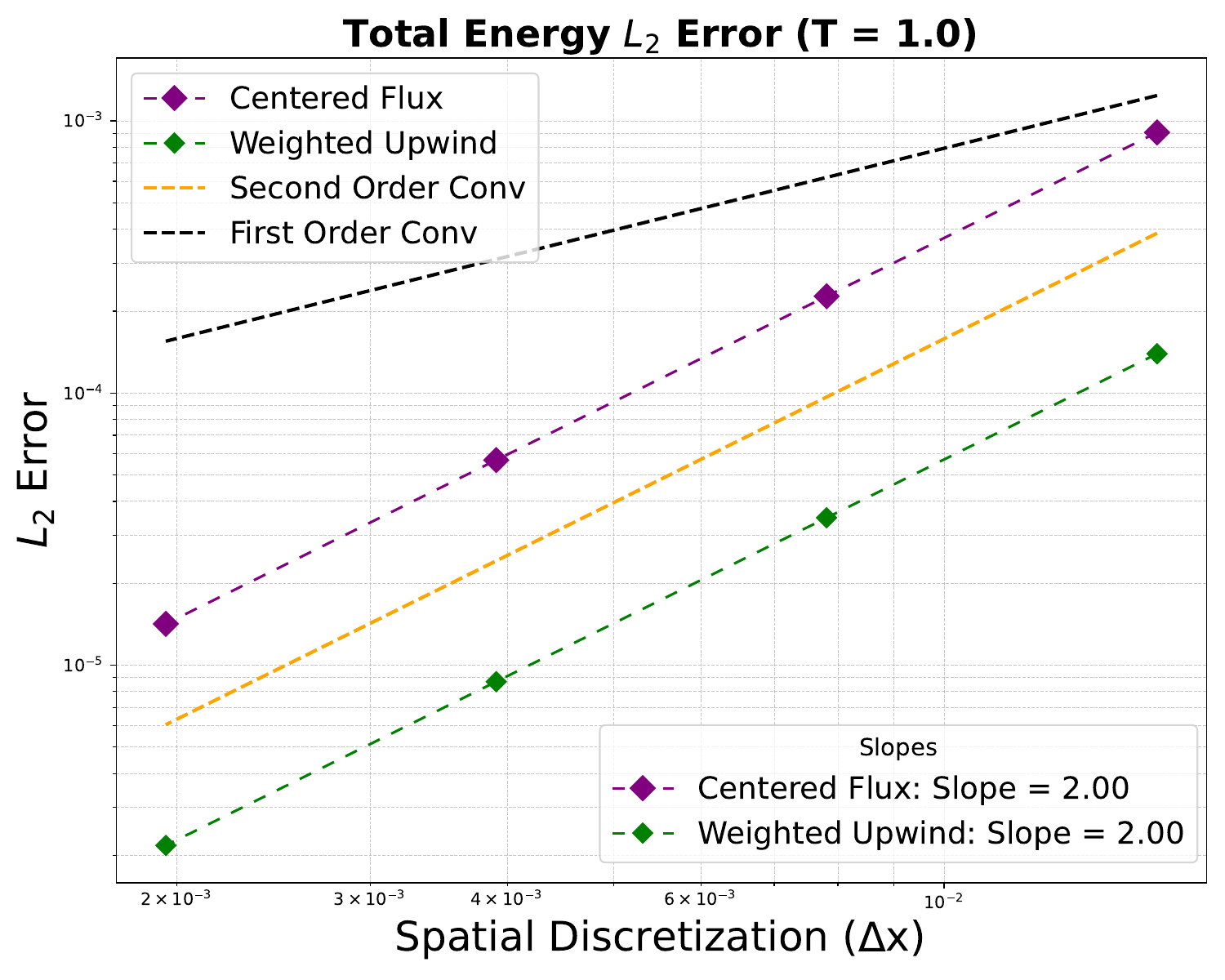}
    \caption{Total Energy $E$ L2-error}
    \label{fig: total-energy-euler-smooth-solution}
  \end{subfigure}
  \caption{Sinusoidal entropy wave \eqref{eq: sinusoidal_solution_euler} showing the \(L_2\) error using discrete lattice points \(N=[64,128,256,512]\), \(\text{CFL}=0.30\), \(\omega=2.0\), and final time \(T=1.0\).}
  \label{fig: smooth-solution-euler-convergence}
\end{figure}
\begin{figure}
  \centering
  \begin{subfigure}[b]{0.30\textwidth}
    \includegraphics[width=\linewidth]{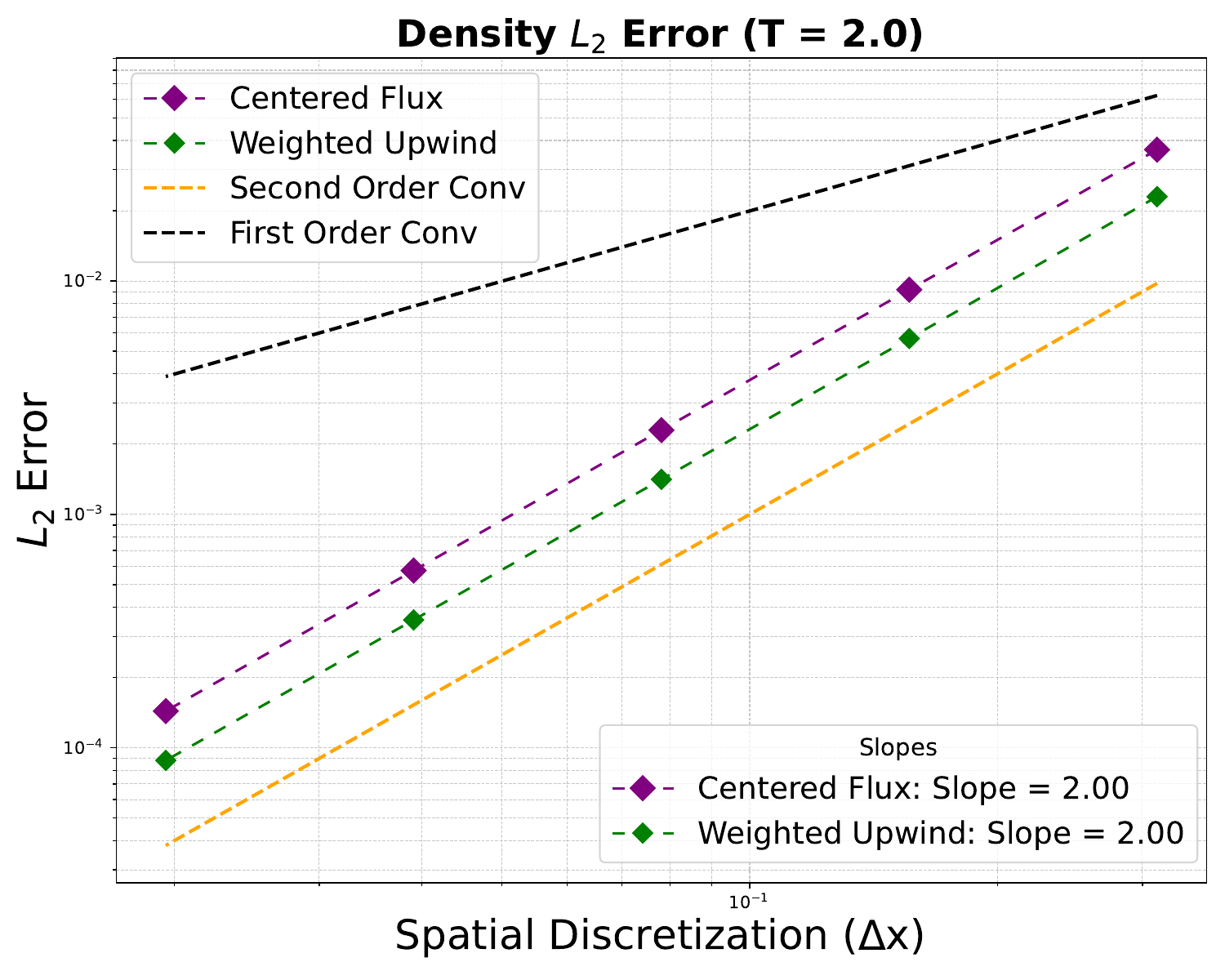}
    \caption{Density}
    \label{fig: density-euler-smooth-vortex}
  \end{subfigure}
  \hfill
  \begin{subfigure}[b]{0.30\textwidth}
    \includegraphics[width=\linewidth]{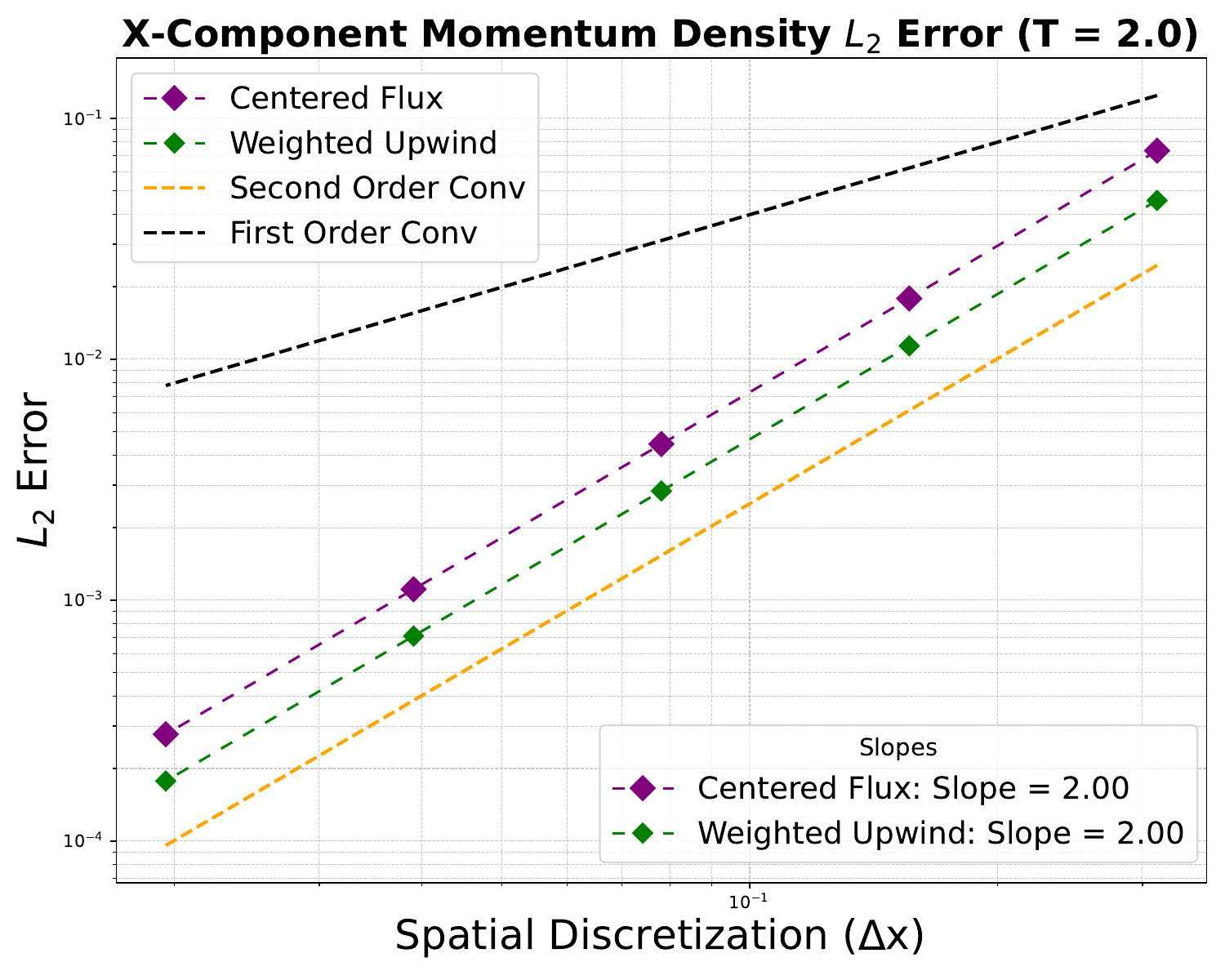}
    \caption{X-Component Momentum Density}
    \label{fig: momentum-density-euler-smooth-vortex}
  \end{subfigure}
  \hfill
  \begin{subfigure}[b]{0.30\textwidth}
    \includegraphics[width=\linewidth]{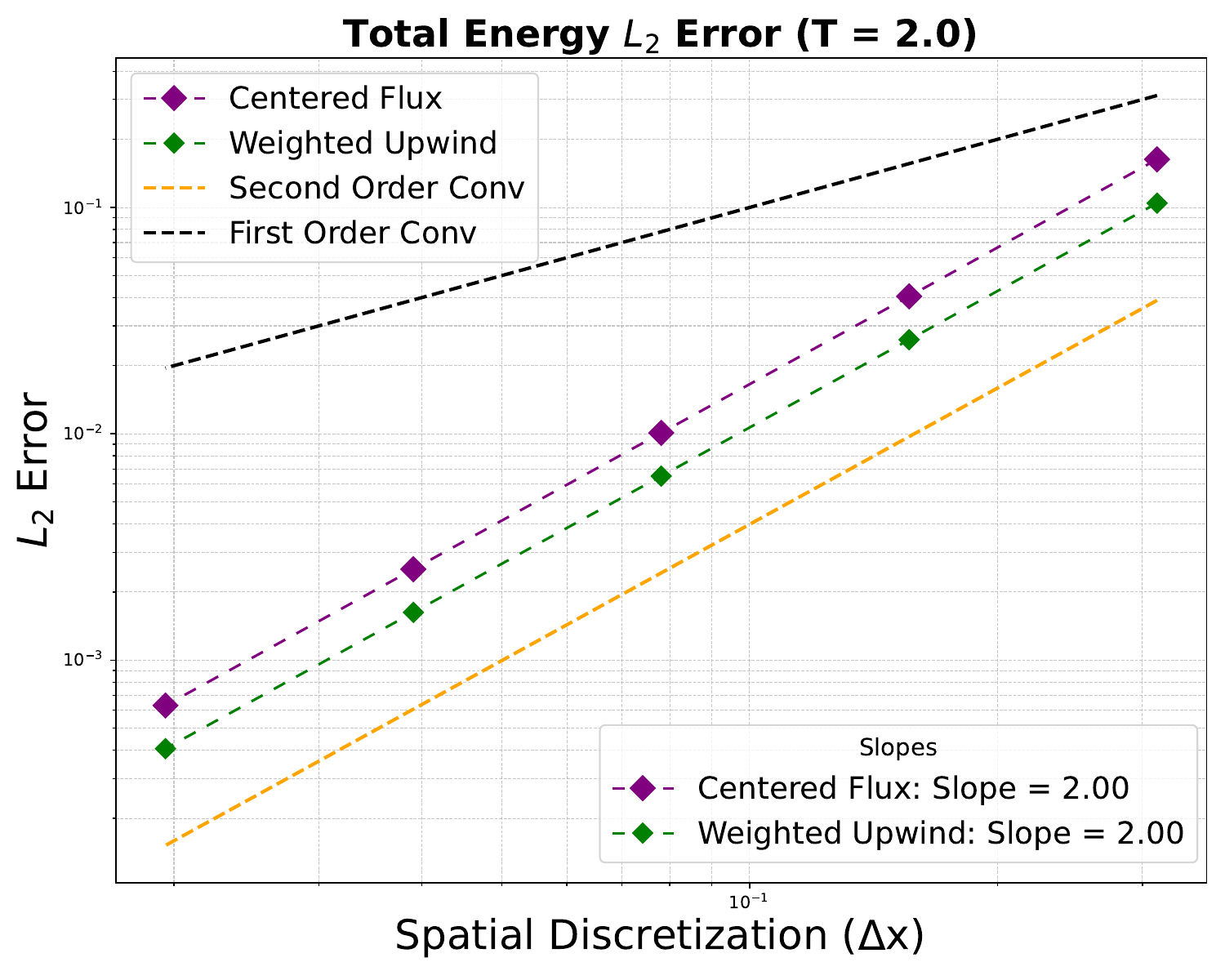}
    \caption{Total Energy}
    \label{fig: total-energy-euler-smooth-vortex}
  \end{subfigure}
  \caption{Euler smooth vortex \eqref{eq: euler_smooth_vortex} showing the \(L_2\) error using discrete lattice points \(N=[64,128,256,512,1024]\), \(\text{CFL}=0.30\), \(\omega=2.0\), and final time \(T=2.0\).}
  \label{fig: smooth-vortex-euler-convergence}
\end{figure}
\begin{figure}
  \centering
  \begin{subfigure}[b]{0.30\textwidth}
    \includegraphics[width=\linewidth]{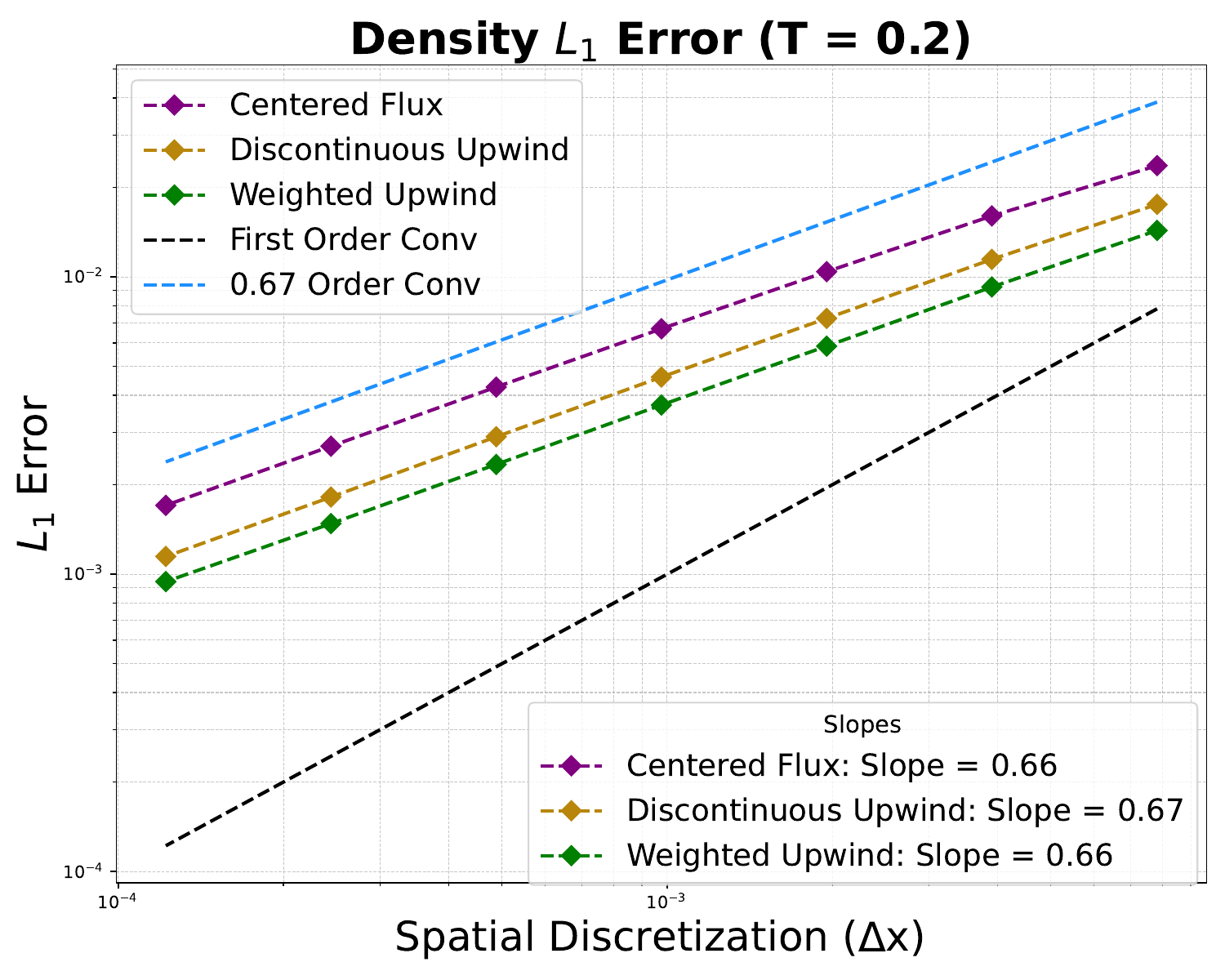}
    \caption{Density}
    \label{fig: density-sod-shock-tube-copnv}
  \end{subfigure}
  \hfill
  \begin{subfigure}[b]{0.30\textwidth}
    \includegraphics[width=\linewidth]{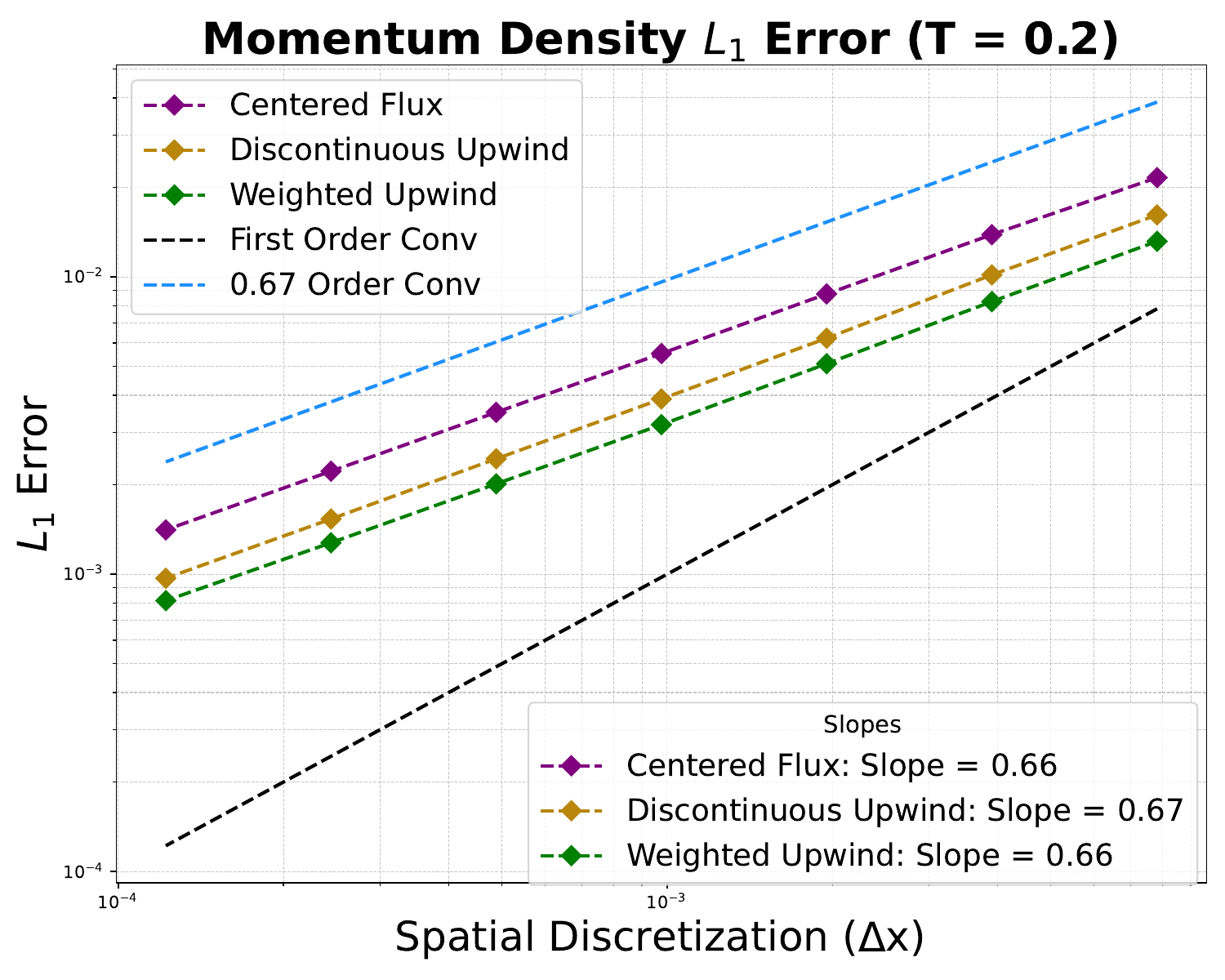}
    \caption{Momentum Density}
    \label{fig: energy-sod-shock-tube-conv}
  \end{subfigure}
  \hfill
  \begin{subfigure}[b]{0.30\textwidth}
    \includegraphics[width=\linewidth]{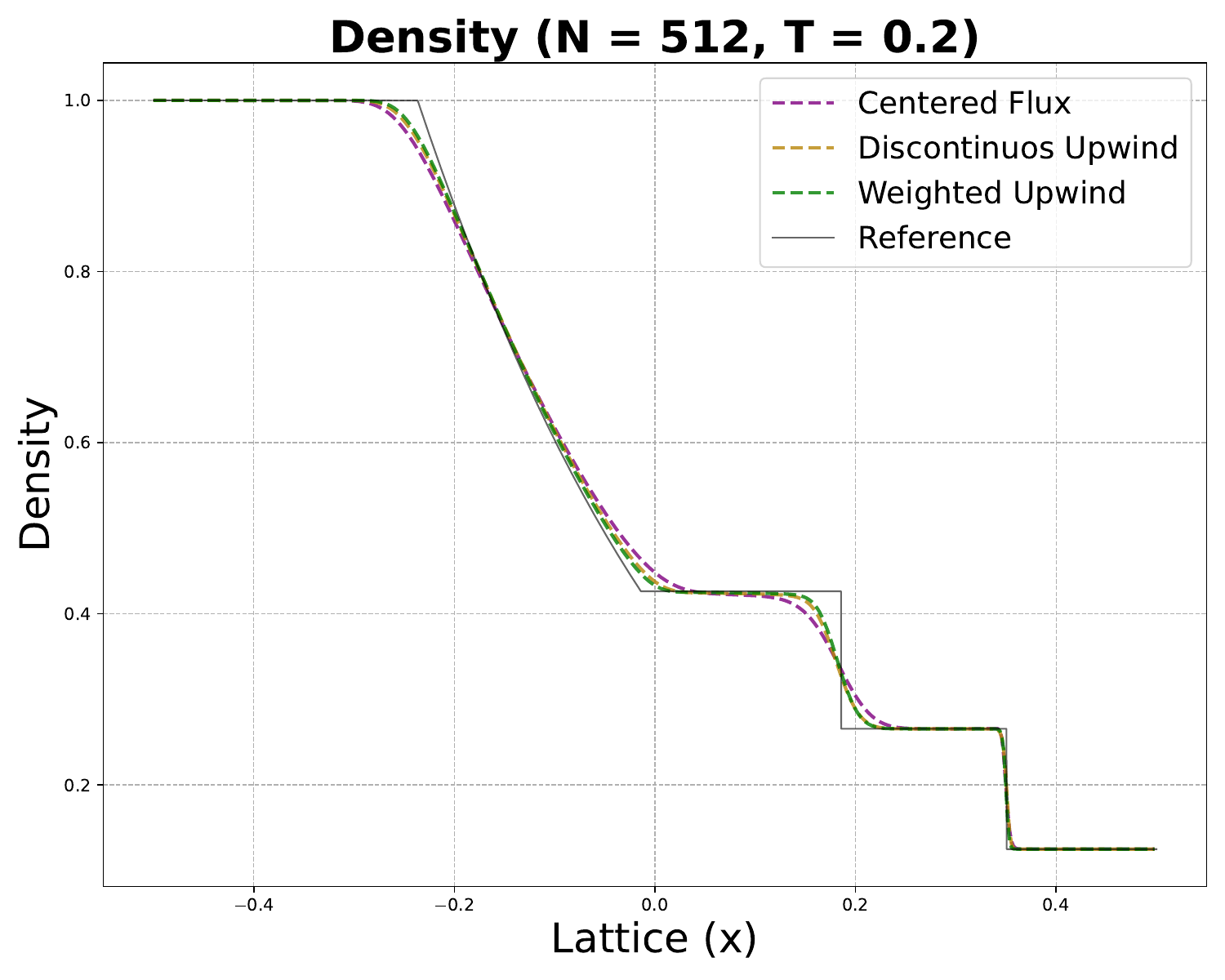}
    \caption{Density}
    \label{fig: density-sod-shock-tube-quantities}
  \end{subfigure}
  \caption{Sod shock tube problem (\autoref{tab: euler-riemann-problems}) using discrete lattice points \(N=[128,256,512,\ldots,8192]\) and \(\text{CFL}=0.90\).}
  \label{fig: sod-shock-tube}
\end{figure}
\begin{figure}
  \centering
  \begin{subfigure}[b]{0.30\textwidth}
    \includegraphics[width=\linewidth]{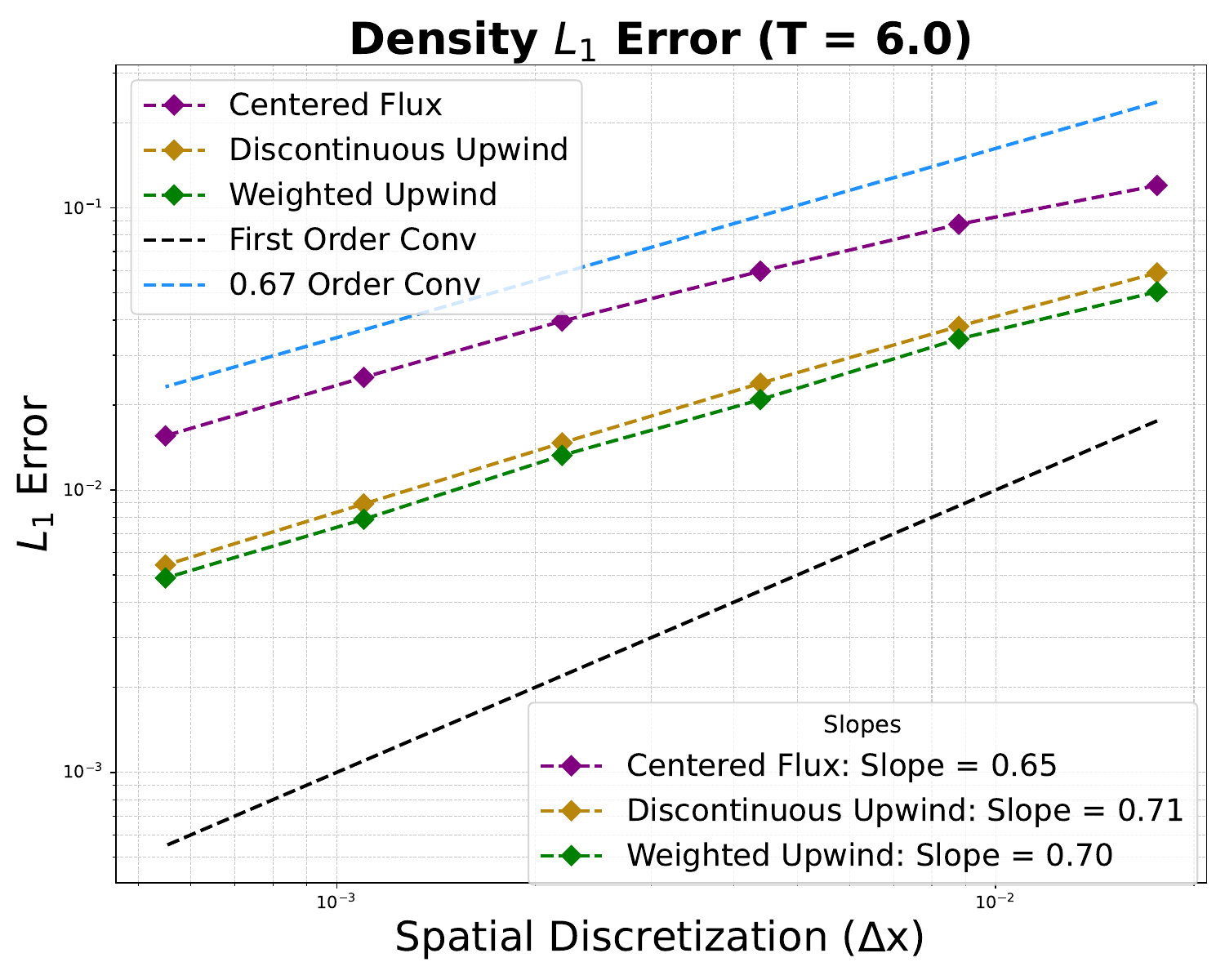}
    \caption{Density}
    \label{fig: density-leblanc-shock-tube-conv}
  \end{subfigure}
  \hfill
  \begin{subfigure}[b]{0.30\textwidth}
    \includegraphics[width=\linewidth]{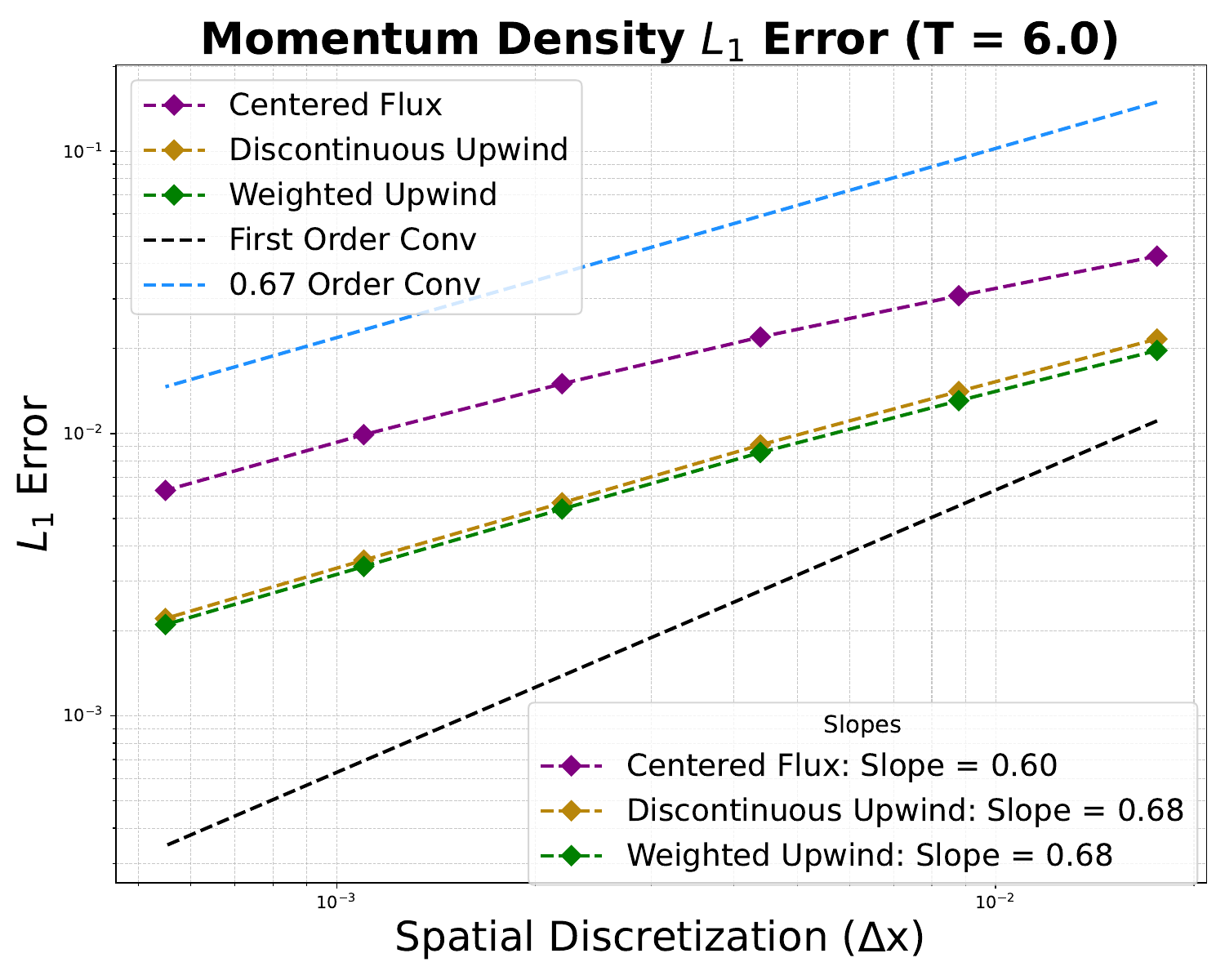}
    \caption{Momentum Density}
    \label{fig: energy-leblanc-shock-tube-conv}
  \end{subfigure}
  \hfill
  \begin{subfigure}[b]{0.30\textwidth}
    \includegraphics[width=\linewidth]{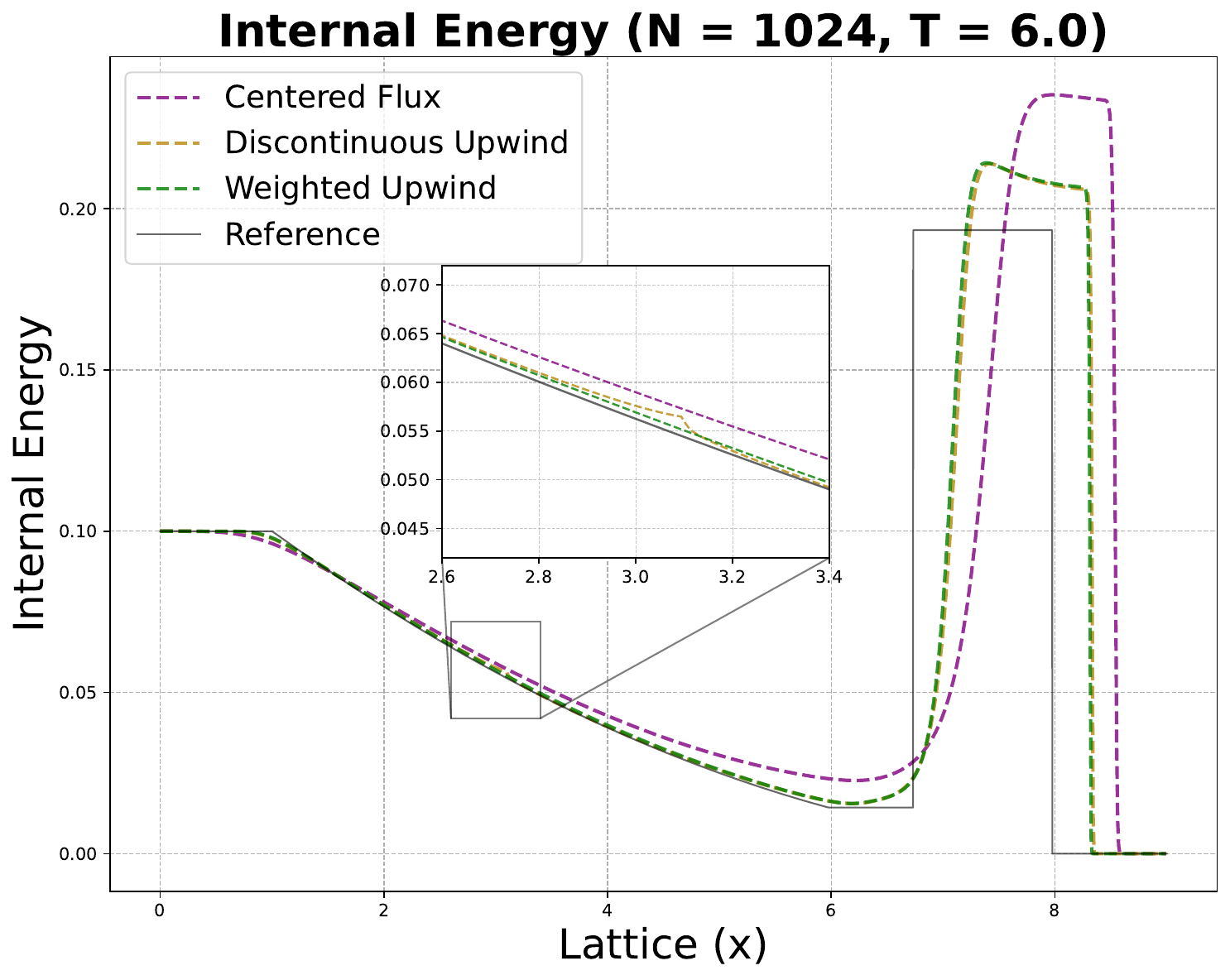}
    \caption{Internal Energy}
    \label{fig: internal-energy-leblanc-shock-tube-quantity}
  \end{subfigure}
  \caption{Leblanc shock tube problem (\autoref{tab: euler-riemann-problems}) using discrete lattice points \(N=[512,1024,\ldots,16384]\) and \(\text{CFL}=0.90\).} 
  \label{fig: leblanc-shock-tube}
\end{figure}
\begin{figure}
  \centering
  \begin{subfigure}[b]{0.30\textwidth}
    \includegraphics[width=\linewidth]{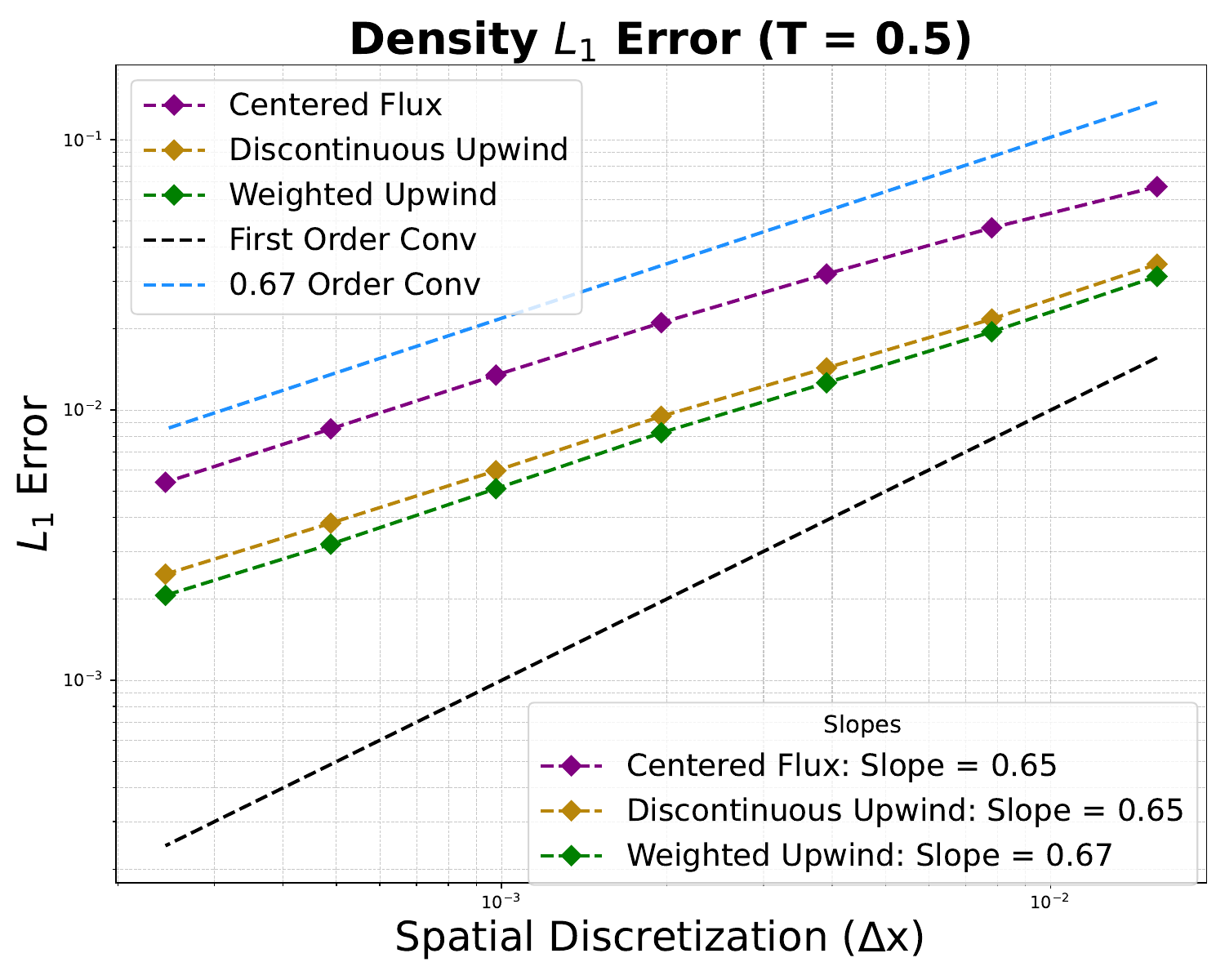}
    \caption{Density}
    \label{fig: density-fixed-sonic-point-conv}
  \end{subfigure}
  \hfill
  \begin{subfigure}[b]{0.30\textwidth}
    \includegraphics[width=\linewidth]{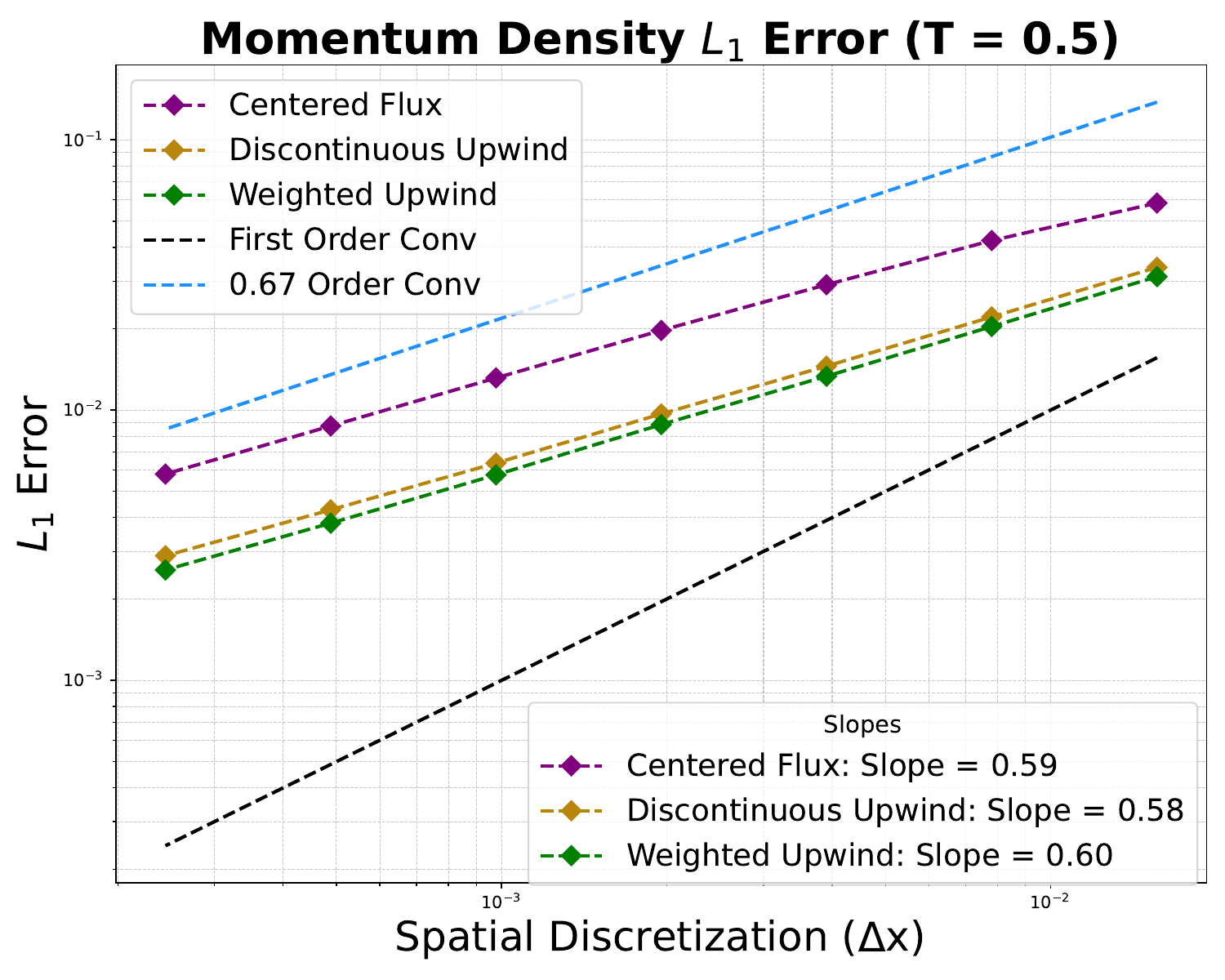}
    \caption{Momentum Density}
    \label{fig: total-energy-fixed-sonic-point-conv}
  \end{subfigure}
  \hfill
  \begin{subfigure}[b]{0.30\textwidth}
    \includegraphics[width=\linewidth]{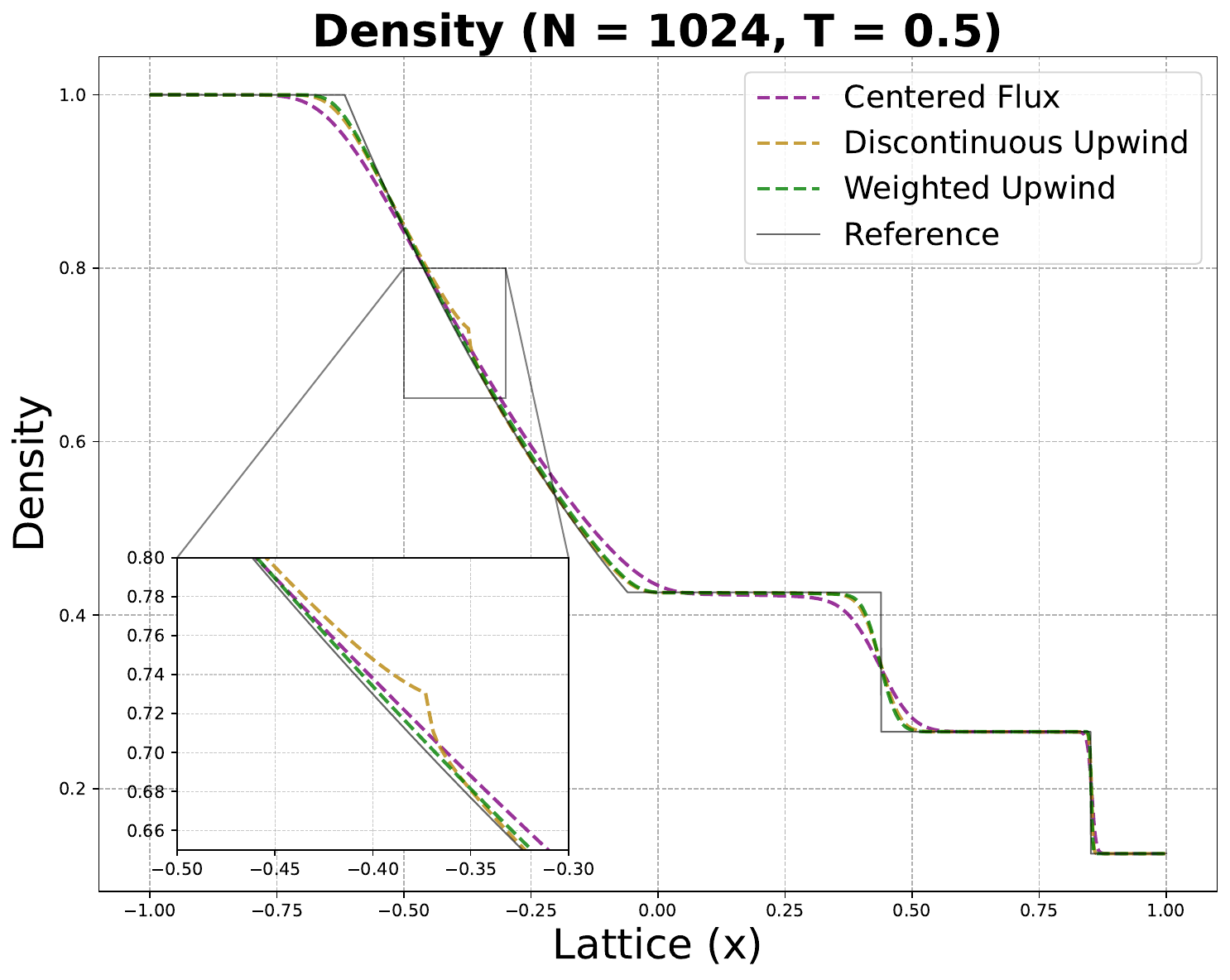}
    \caption{Density}
    \label{fig: density-fixed-sonic-point-quantities}
  \end{subfigure}
  \caption{Fixed sonic point shock tube problem (\autoref{tab: euler-riemann-problems}) using discrete lattice points \(N=[128,256,512,\ldots,8192]\) and \(\text{CFL}=0.90\).}
  \label{fig: fixed-sonic-point}
\end{figure}
\subsection{Ideal Magnetohydrodynamics}\label{ss: ideal_MHD}
Having established the performance of the weighted upwind distribution function set \eqref{eq: adjusted-f1}--\eqref{eq: adjusted-f3} for both the shallow water and compressible Euler equations, we now consider ideal magnetohydrodynamics (MHD). 
The MHD equations are a coupled system of equations for modeling conducting fluids in a magnetic field. They are derived from multifluid plasma equations where hydrodynamics equations for different species (e.g. electrons and ions) are transformed into a center-of-mass reference frame and coupled with a reduced form of Maxwell’s equations. Detailed derivations can be found in \cite{Goedbloed_Poedts_2004}. Ideal inviscid compressible MHD equations are then obtained in the limit when dissipative effects are negligible so that the resistive and viscous terms can be ignored. The applications of MHD type equations include astrophysical and fusion energy plasma simulations \cite{Goedbloed_Poedts_2004}. 

In our formulation of the VKLB ideal MHD system we use a hyperbolic divergence cleaning technique to control \(\nabla \cdot \vb{B} = 0\) 
%\cite{Dedner_2002_MHD_Hyperbolic_Divergence_Cleaning, %Baty_2022_MHD_VKLB}. 
This combines the ideal MHD system with a scalar field \(\psi\), coupling the divergence errors to a hyperbolic PDE that propagates the errors out of the domain with a wave-speed of $c_h$ \cite{Dedner_2002_MHD_Hyperbolic_Divergence_Cleaning}. In our studies we take $c_h = 6*c_f$ where $c_f$ is the fast magnetosonic wave speed \cite{Goedbloed_Poedts_2004,Dedner_2002_MHD_Hyperbolic_Divergence_Cleaning,baty2023robust}. The resulting MHD-DC system has the form of \eqref{eq: hyperbolic-system}, with conserved variables
\begin{equation}\label{eq: ideal_mhd_system}
U = \begin{bmatrix}
    \rho \\
    \rho \mathbf{u}\\
    E\\
    \mathbf{B}\\
    \psi 
\end{bmatrix},
\quad 
\mathbf{F}(U) = \begin{bmatrix}
    \rho \mathbf{u}\\
    \rho \mathbf{u} \otimes \mathbf{u} + \left(p + \frac{1}{2}\mathbf{B} \cdot \mathbf{B} \right) \mathbf{I} - \mathbf{B} \otimes \mathbf{B}\\
    \left(E + p + \frac{1}{2}\mathbf{B}\cdot \mathbf{B} \right)\mathbf{u} - \left(\mathbf{B}\cdot\mathbf{u} \right)\mathbf{B}\\
    \mathbf{u}\otimes \mathbf{B} - \mathbf{B} \otimes \mathbf{u} + \psi \mathbf{I}\\
    c_h^2 \mathbf{B}
\end{bmatrix}
\end{equation}
The system is closed through the ideal gas equation of state,
\[ E = \frac{p}{\gamma - 1} + \frac{1}{2}\rho\left(\mathbf{u} \cdot \mathbf{u}\right) + \frac{1}{2}\left(\mathbf{B}\cdot \mathbf{B}\right)
\]
Here, \(\rho\) is the density, \(p\) is the pressure,  \(\mathbf{u} = (u, v, w) \) is the fluid velocity, \(\gamma\) is the adiabatic index, \(\mathbf{B} = (B_1, B_2, B_3) \) is the magnetic field, and \( F^{(1)}(U) = \mathbf{F} \cdot \mathbf{e}_1, \; F^{(2)}(U) = \mathbf{F} \cdot \mathbf{e}_2 \) are the components of the flux along the x and y coordinate directions respectively. 

The first test considered is the one-dimensional Brio--Wu shock tube, a standard benchmark for ideal MHD \cite{Mabuza_2020_Linearity_Nodal_Variation}. Let $\Omega = [-0.5, 0.5]$. The initial conditions are
\begin{equation}\label{eq: brio_wu_ic}
(\rho, p, v_x, v_y, v_z, B_x, B_y, B_z)(x,0)=
\begin{cases}
(1.0,\;1.0,\;0,\;0,\;0,\;0.75,\;1.0,\;0.0), & x<0.0,\\[4pt]
(0.125,\;0.1,\;0,\;0,\;0,\;0.75,\;-1.0,\;0.0), & x\ge 0.0,
\end{cases}
\qquad \gamma=2.
\end{equation}
The initial conditions \eqref{eq: brio_wu_ic} are that of the Sod shock tube in \autoref{tab: euler-riemann-problems}, with the addition of a nonzero magnetic field \(\mathbf{B}\) that has a discontinuity in the initial condition for ${\bf B}_y$. The reference solution for \eqref{eq: brio_wu_ic} is from \cite{Athena_CPP}, with a high resolution solution using $N = 8000$ grid points. The discontinuous upwind set \eqref{eq: upwinding-f1}--\eqref{eq: upwinding-f3} required the CFL value $\nu \leq 0.45$, otherwise numerical instabilities arose as the pressure became negative. The centered flux distribution function set \eqref{eq: centered-f1}--\eqref{eq: centered-f3} and weighted upwind distribution function set \eqref{eq: adjusted-f1}--\eqref{eq: adjusted-f1} ran at a much higher CFL $\nu = 0.90$. \autoref{fig: brio-wu-shock-tube} illustrates the weighted upwind distribution function set more accurately captured the various discontinuities when compared to the the centered flux distribution set \eqref{eq: centered-f1}--\eqref{eq: centered-f3} and the discontinuous upwind distribution function set \eqref{eq: upwinding-f1}--\eqref{eq: upwinding-f3}. 

The second test is the Orszag--Tang problem, a widely used two-dimensional benchmark for ideal MHD \cite{Mabuza_2020_Linearity_Nodal_Variation}. The initial conditions are
\begin{equation}\label{eq: orzsag_tang}
\Omega = [0,1]^2, \ \gamma=\tfrac{5}{3},\ 
\rho=\tfrac{25}{36\pi},\ 
p=\tfrac{5}{12\pi},\ 
\mathbf{u}=(-\sin(2\pi y),\ \sin(2\pi x),\ 0), \,\mathbf{B}=(-\sin(2\pi y),\ \sin(4\pi x),\ 0)
\end{equation}
This problem evolves from smooth initial data and rapidly develops a complex pattern of interacting shocks, thereby testing both the accuracy and robustness of the scheme. The high resolution reference solution is from Athena \cite{Athena_CPP}, where we used a \(512 \times 512\) grid spacing, and owe the high resolution to the higher-order Godunov discretization. Having already established the weighted upwind set is more accurate than the discontinuous upwind set on a variety of challenging problems, for the Orzsag--Tang problem \eqref{eq: orzsag_tang} we only compared the centered flux distribution function set \eqref{eq: centered-f1} -- \eqref{eq: centered-f3} and the weighted upwind set \eqref{eq: adjusted-f1} -- \eqref{eq: adjusted-f3}. For the weighted upwind distribution function set, we used two different $c$ values in \eqref{eq: 2d_weighted_upwind} to examine the effects of including and excluding this diffusive coefficient (see \autoref{ss: properties}). \autoref{fig: orszag-tang} shows including $c = 0.05$ leads to a more dissipative solution than with $c = 0.0$, which is consistent with the interpretation of $c$ in \autoref{ss: properties}. Using either $c = 0.0$ and $c = 0.05$, \autoref{fig: orszag-tang} shows both WU-VKLB solutions more accurately capture the reference profiles than the centered flux distribution function set \eqref{eq: centered-f1}--\eqref{eq: centered-f3}.
\begin{figure}
  \centering
  \begin{subfigure}[b]{0.30\textwidth}
    \includegraphics[width=\linewidth]{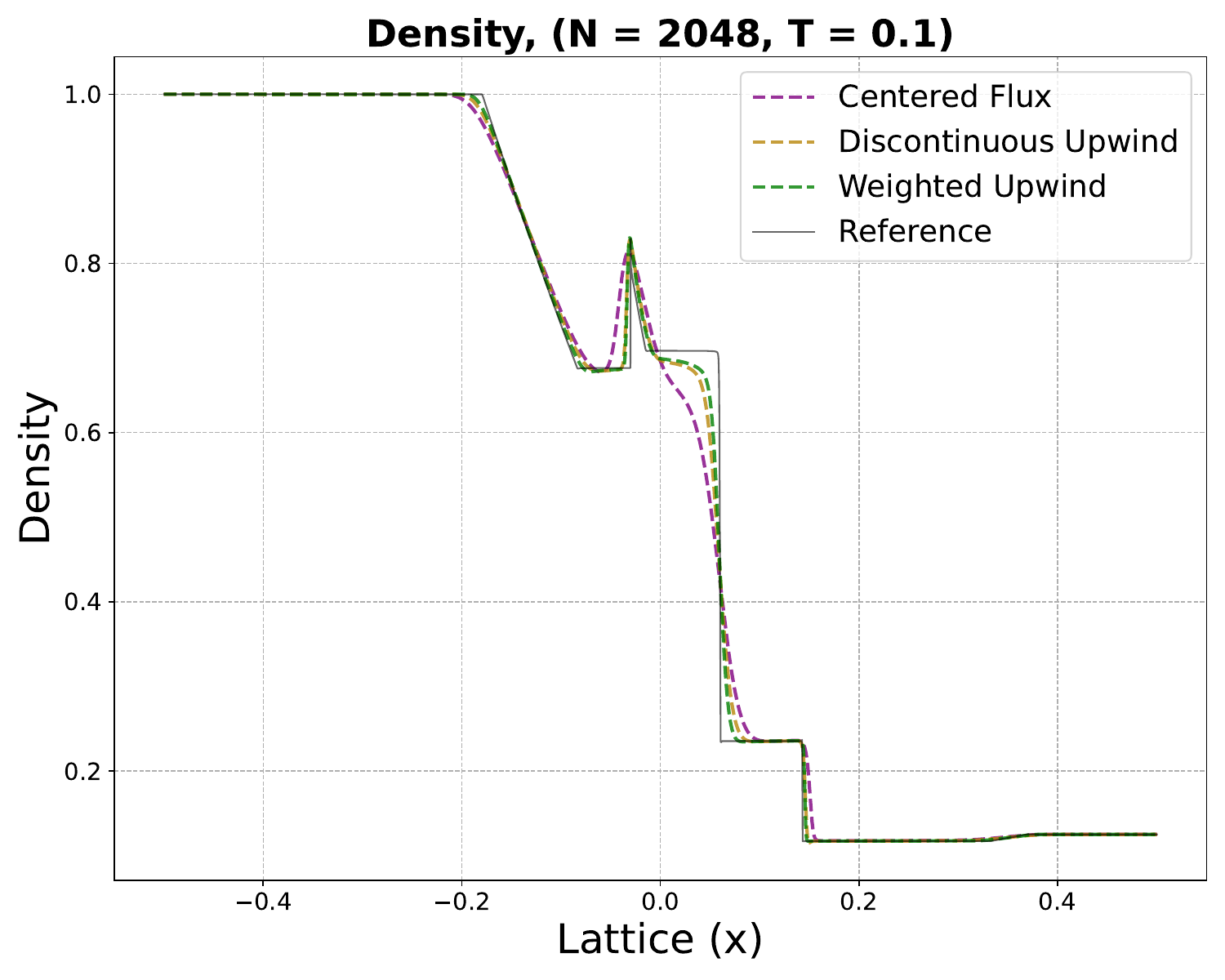}
    \caption{Density}
    \label{fig: density-brio-wu}
  \end{subfigure}
  \hfill
  \begin{subfigure}[b]{0.30\textwidth}
    \includegraphics[width=\linewidth]{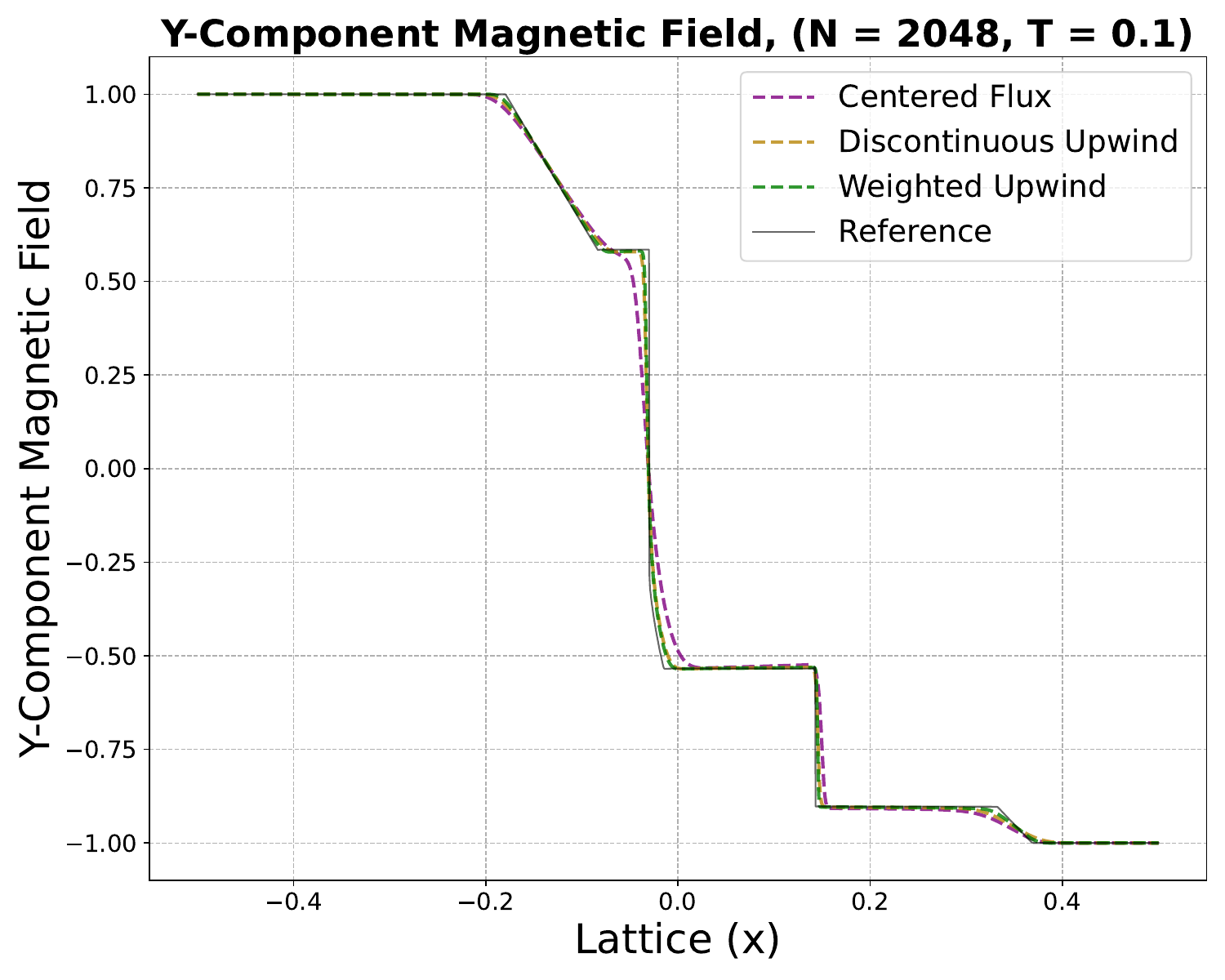}
    \caption{Y-Component Magnetic Field}
    \label{fig: magnetic-field-brio-wu}
  \end{subfigure}
  \hfill
  \begin{subfigure}[b]{0.30\textwidth}
    \includegraphics[width=\linewidth]{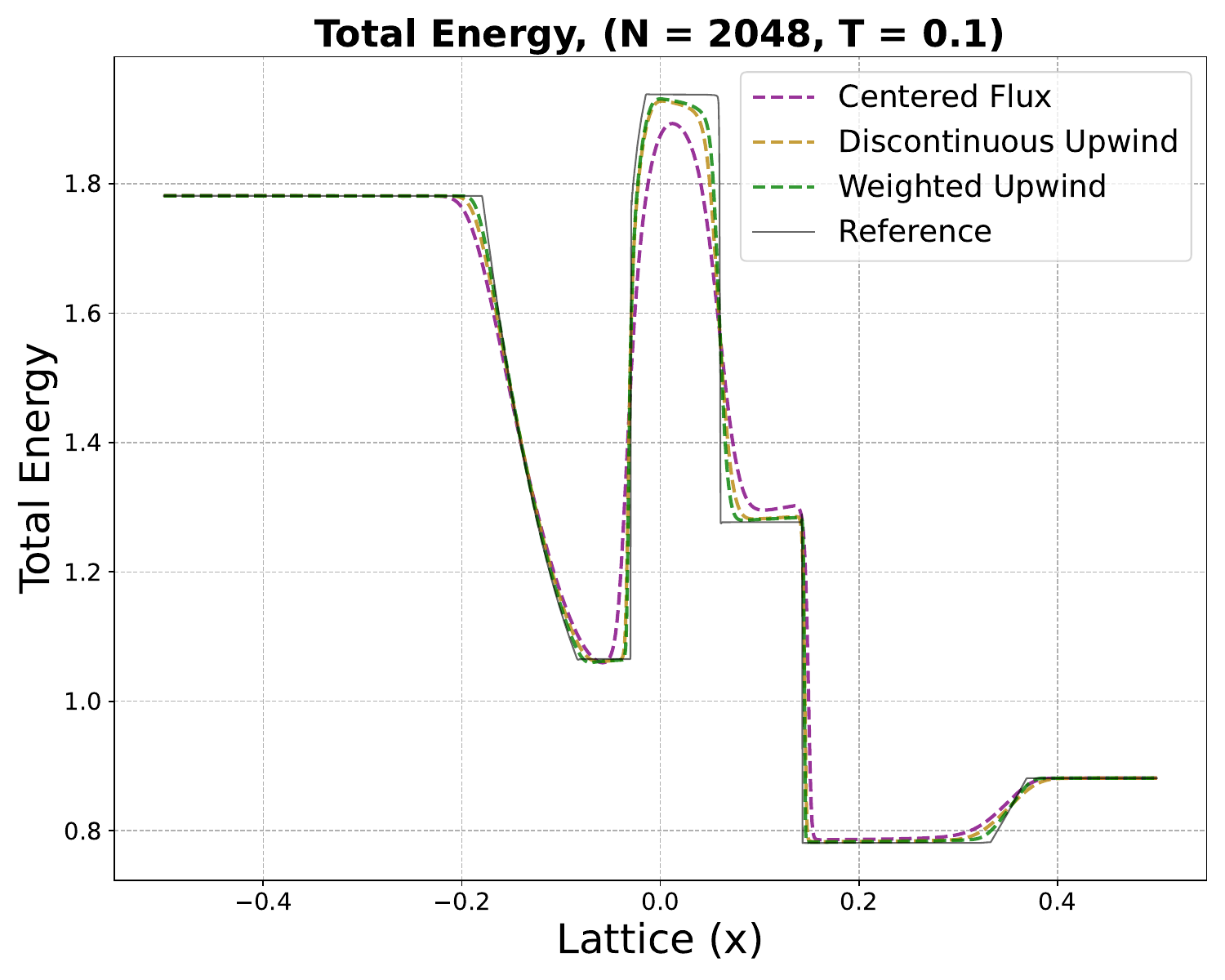}
    \caption{Total Energy}
    \label{fig: total-energy-brio-wu}
  \end{subfigure}
  \caption{Brio--Wu shock tube problem \eqref{eq: brio_wu_ic}. Discrete lattice points $N = 2048$. CFL constraint for the centered flux distribution function set \eqref{eq: centered-f1}--\eqref{eq: centered-f3} and the weighted upwind set \eqref{eq: adjusted-f1}--\eqref{eq: adjusted-f3} is \(CFL = 0.90\). The discontinuous set \eqref{eq: upwinding-f1}--\eqref{eq: upwinding-f3} requires \(CFL =0.45\) for stability.}
  \label{fig: brio-wu-shock-tube}
\end{figure}
\begin{figure}
  \centering
  \begin{subfigure}[b]{0.30\textwidth}
    \includegraphics[width=\linewidth]{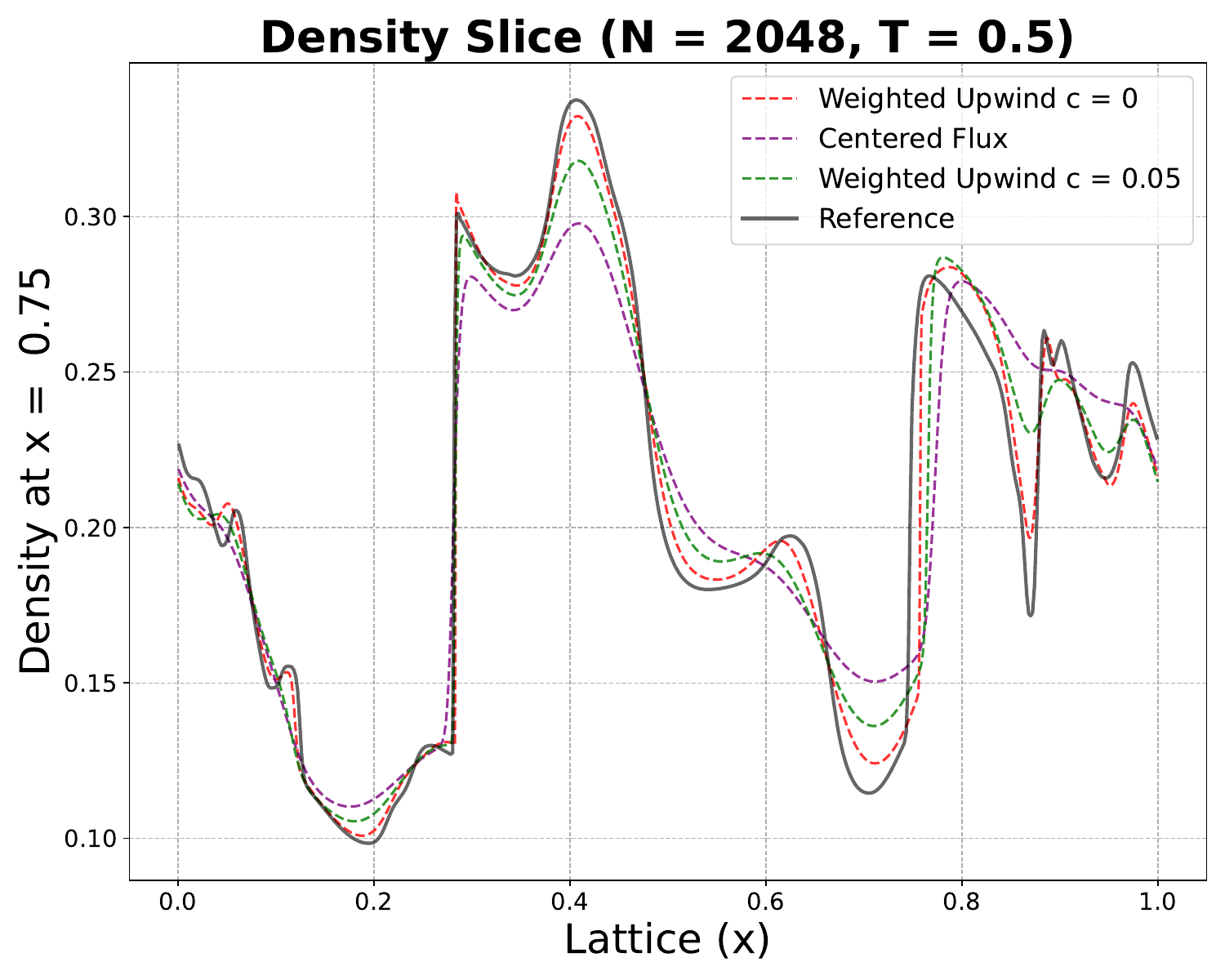}
    \caption{Density Slice at $x = 0.75$}
    \label{fig: density-orzsag-tang}
  \end{subfigure}
  \hfill
  \begin{subfigure}[b]{0.30\textwidth}
    \includegraphics[width=\linewidth]{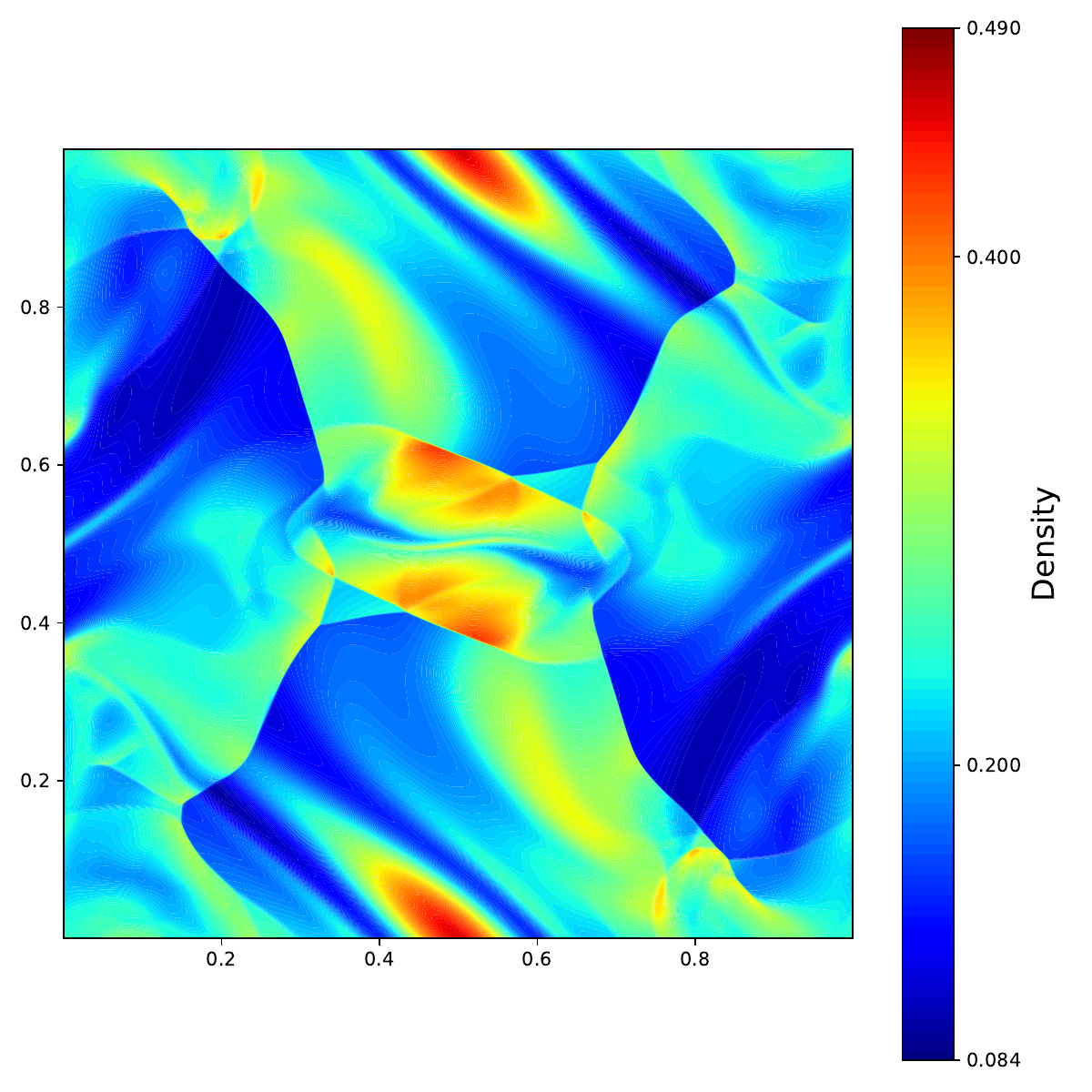}
    \caption{Density with $c = 0.0$}
    \label{fig: pressure-orzsag-tang}
  \end{subfigure}
  \hfill
  \begin{subfigure}[b]{0.30\textwidth}
    \includegraphics[width=\linewidth]{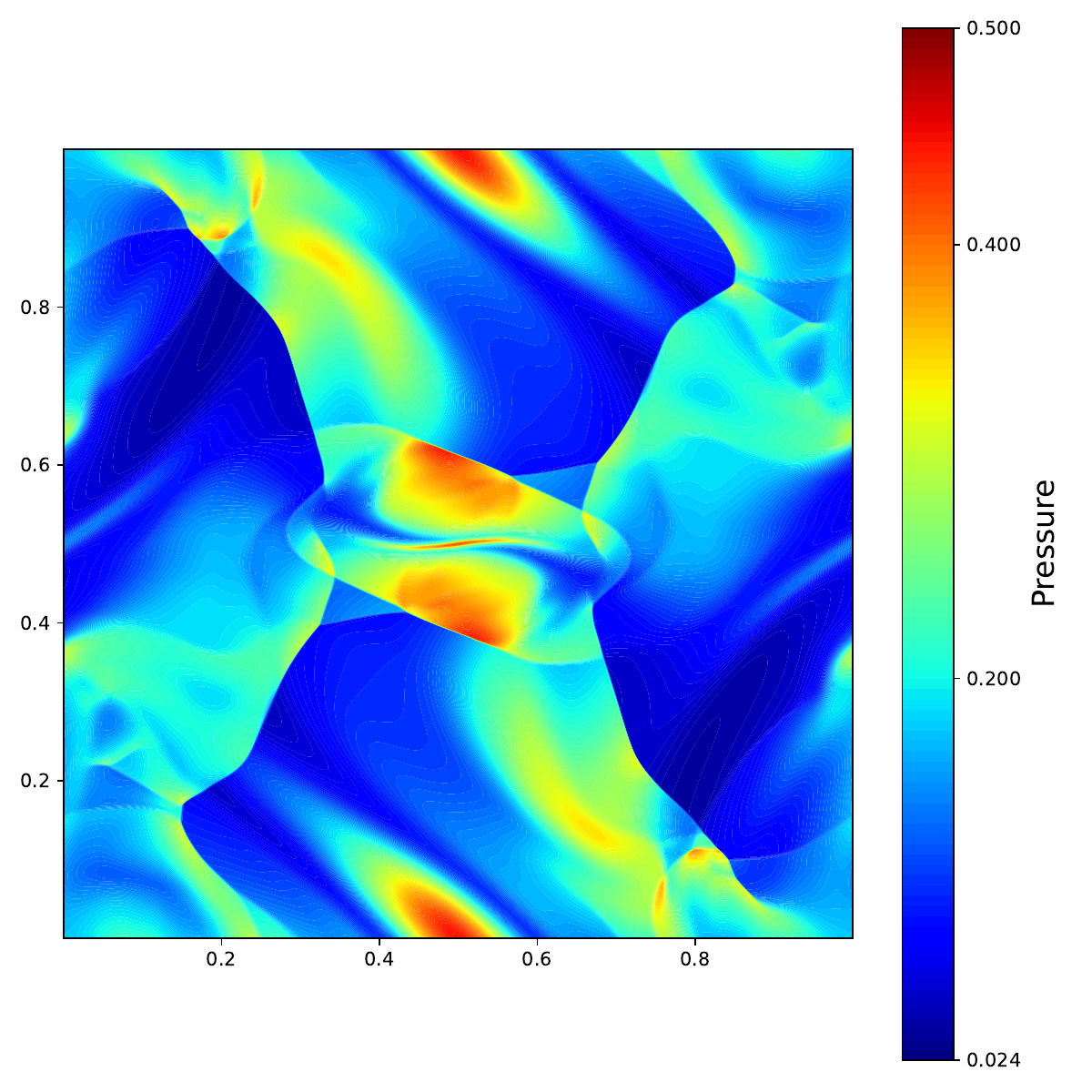}
    \caption{Pressure with $c = 0.0$}
    \label{fig: density-orzsag-tang-athena-comparison}
  \end{subfigure}
  \caption{Orszag--Tang 2D problem \eqref{eq: orzsag_tang} on a \(2048\times2048\) mesh to final time \(T=0.5\). We compare the centered flux set \eqref{eq: centered-f1}--\eqref{eq: centered-f3} and the weighted upwind set \eqref{eq: adjusted-f1}--\eqref{eq: adjusted-f3}, using \(c=0.0\) and \(c=0.05\) to see the affect of additional diffusion (see \autoref{thm:equiv_fd_omega_1}). }
  \label{fig: orszag-tang}
\end{figure}

\subsection{Effect of \boldmath $\omega$ on the solution}
Finally, we examine the effect of increasing the relaxation parameter beyond \(\omega=1\). As discussed in \autoref{ss: properties}, the sub-characteristic condition \eqref{subcharacteristic_condition} restricts the relaxation parameter to \(\omega \in (0,2]\). Some of the theoretical results in \autoref{sec:equivalence_fd_fv} required choosing $\omega=1$ (see \autoref{thm:equiv_fd_omega_1}     and \autoref{lem: FV}). Moreover, the choice  \(\omega \in (0,1]\) results in additional stability properties of the VKLB scheme~\cite{Anandan_2024_VKLB_Upwinding_Source_Term}.
%However, \cite{Anandan_2024_VKLB_Upwinding_Source_Term} showed that the desirable H-theorem property is obtained only under the further restriction \(\omega \in (0,1]\). 
Motivated by these results, we chose \(\omega = 1\) for the shock problems considered previously. We now examine the consequences of choosing \(1 < \omega < 2\). We examine the Euler sod shock tube (see \autoref{tab: euler-riemann-problems}) and Brio-Wu (see \eqref{eq: brio_wu_ic}). For the centered flux distribution function set \eqref{eq: centered-f1}--\eqref{eq: centered-f3}, \autoref{fig: centered-flux-sod-multiple-omega} and \autoref{fig: brio-wu-multiple-omega} illustrates larger values of \(\omega\) can improve accuracy, but this comes at the cost of problem-dependent tuning. For example, instabilities occur at \(\omega=1.4\) for the Euler Sod shock tube problem whereas the Brio--Wu shock tube problem remains stable until \(\omega=1.7\), where after this value the pressure drops below zero. In contrast, for the weighted upwind distribution function set \eqref{eq: upwinding-f1}--\eqref{eq: upwinding-f3}, the Euler Sod and Brio--Wu shock tube problems become unstable at \(\omega=1.4\). However, \autoref{fig: weighted-upwind-sod-mutliple-omega} and \autoref{fig: brio-wu-multiple-omega} illustrates the corresponding solution profiles are very close for the stable values of \(\omega\), indicating that choosing \(\omega=1\) is a sufficient choice. Moreover, for the Brio--Wu problem, the weighted upwind distribution function set with \(\omega=1\) is more accurate than the centered flux distribution function set with \(\omega=1.6\). These result illustrate the potential loss of stability that can occur for a choice of $\omega > 1$ and illustrates the difficulty of pursuing an apriori choice of $\omega > 1$ in attempt to obtain a marginal increase in resolution. In the conclusions that follow we briefly comment further on this issue.    
\begin{figure}
  \centering
  \begin{subfigure}[b]{0.30\textwidth}
    \includegraphics[width=\linewidth]{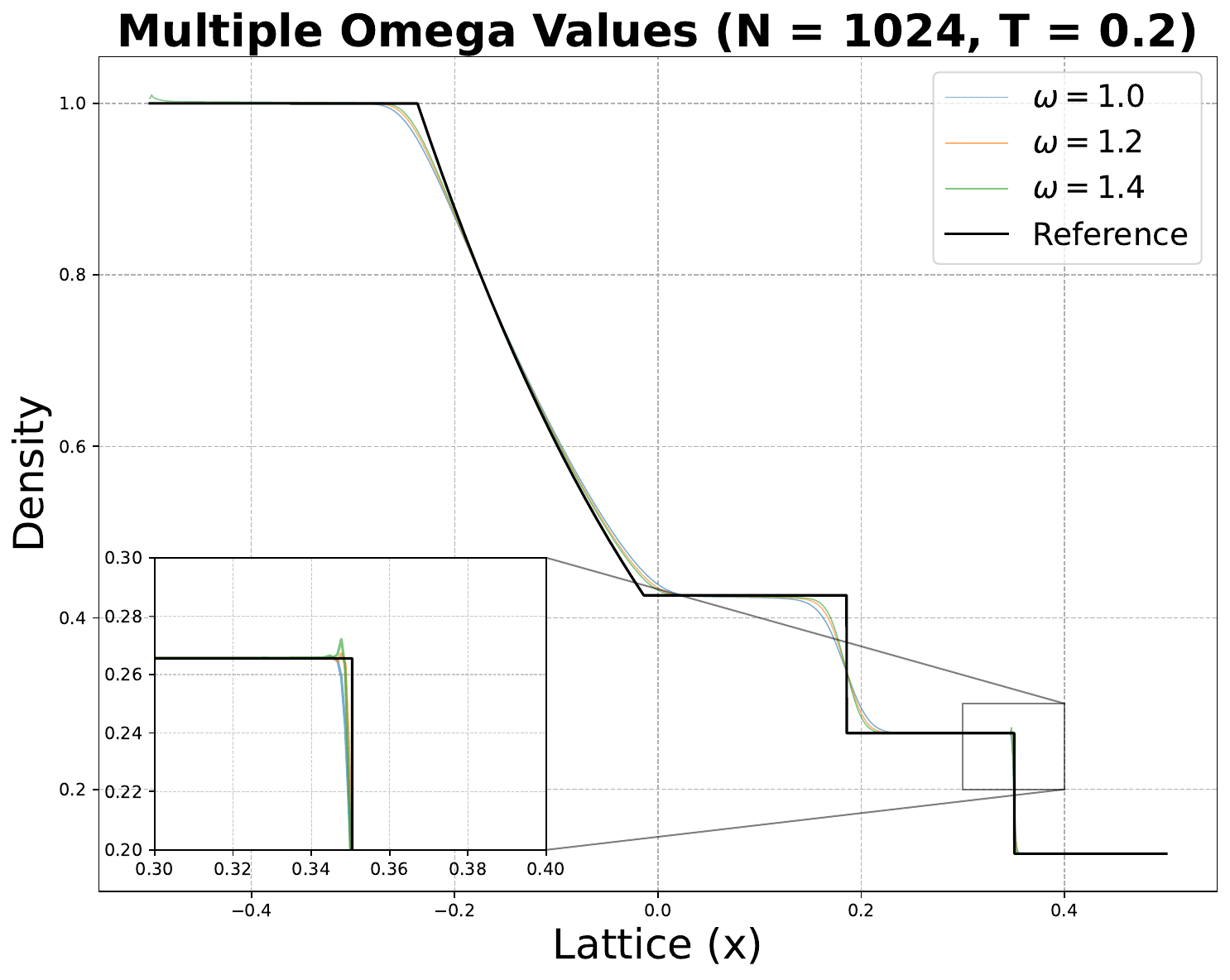}
    \caption{Centered flux, Sod problem.}
    \label{fig: centered-flux-sod-multiple-omega}
  \end{subfigure}
  \hfill
  \begin{subfigure}[b]{0.30\textwidth}
    \includegraphics[width=\linewidth]{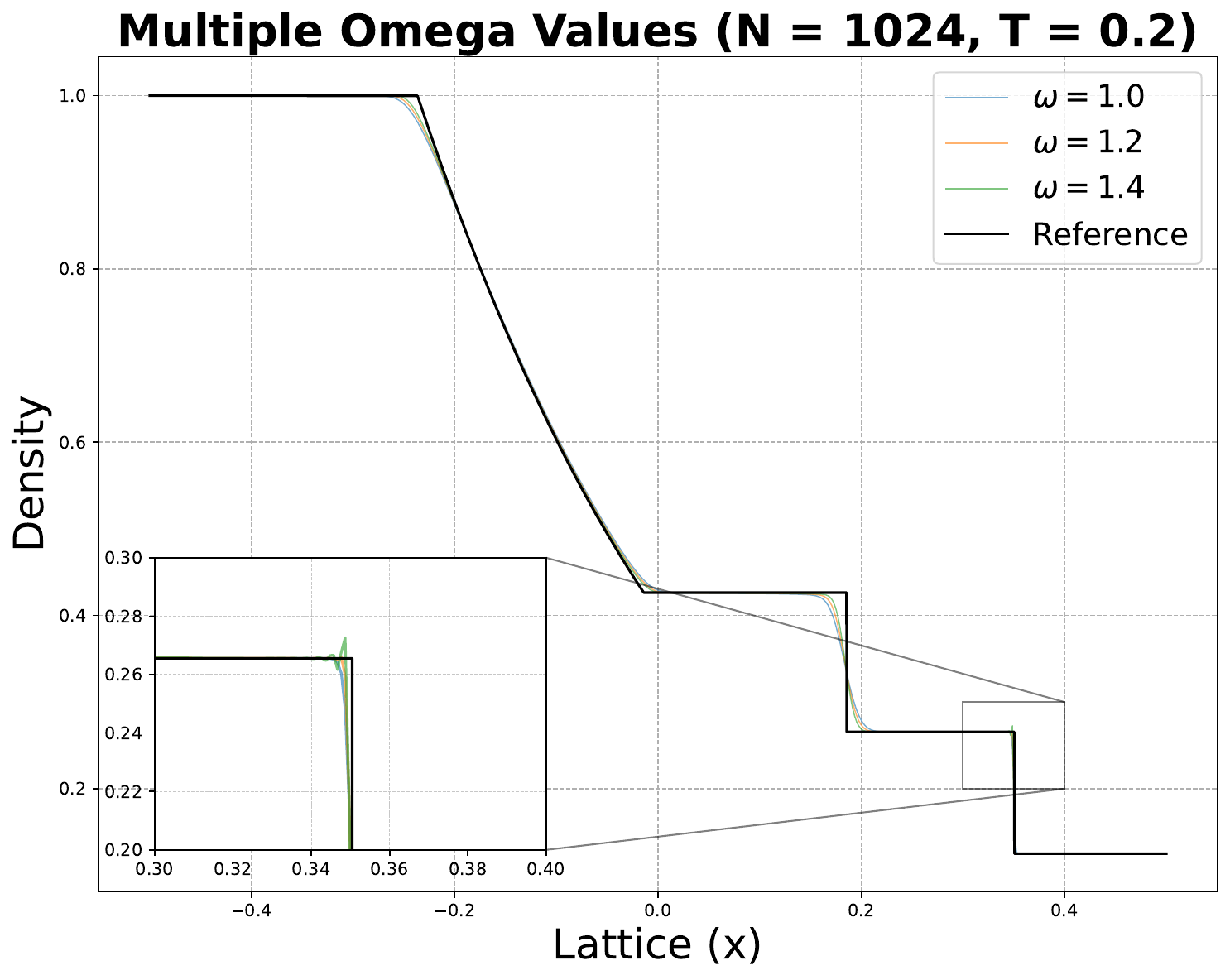}
    \caption{Weighted upwind, Sod problem.}
    \label{fig: weighted-upwind-sod-mutliple-omega}
  \end{subfigure}
  \hfill
  \begin{subfigure}[b]{0.30\textwidth}
    \includegraphics[width=\linewidth]{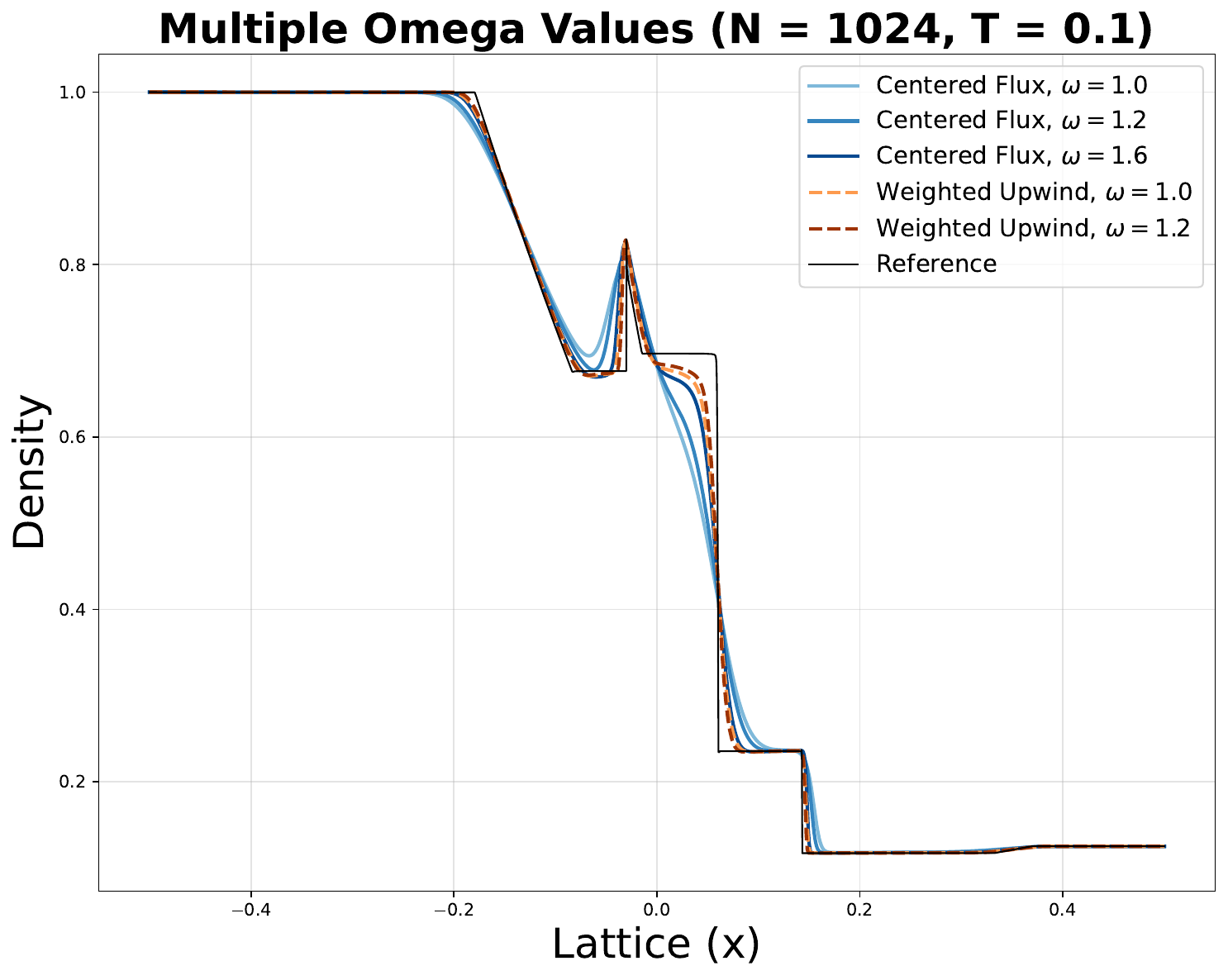}
    \caption{Brio--Wu comparison.}
    \label{fig: brio-wu-multiple-omega}
  \end{subfigure}
  \caption{Effect of increasing \(\omega>1\) on density stability for the centered flux
set \eqref{eq: centered-f1}--\eqref{eq: centered-f3} and weighted upwind set
\eqref{eq: adjusted-f1}--\eqref{eq: adjusted-f3}. Panels show the Sod shock
tube, \autoref{tab: euler-riemann-problems}, and Brio--Wu problem,
\eqref{eq: brio_wu_ic}. For Sod, both formulations become unstable at \(\omega=1.4\). For Brio--Wu, negative pressure occurs for the centered flux set when \(\omega>1.6\) and for the weighted upwind set when \(\omega>1.2\).
  }
  \label{fig: multiple-omega-values}
\end{figure}

\section{Conclusion }\label{s:7}
This work introduced a weighted upwind vector kinetic lattice Boltzmann (WU-VKLB) method for multidimensional hyperbolic conservation laws. Building on flux splitting equilibrium distribution functions as presented in \cite{Anandan_2024_VKLB_Upwinding_Source_Term} and the flux splitting \eqref{eq: eigenvalue-split-discrete}, we proposed a continuous, eigenvalue parameterized transition \eqref{fig:alpha-sigmoid} between the flux structure of the centered flux distribution function set \eqref{eq: centered-f1} -- \eqref{eq: centered-f3} and and the discontinuous upwind distribution function set \eqref{eq: upwinding-f1}--\eqref{eq: upwinding-f3}. This construction preserves the stream and collide simplicity of the VKLB framework \autoref{s: algorithm_VKLB} while also having the desired stability and accuracy of each limiting sets. Using an equivalent finite difference viewpoint, we showed the addition of the state vector yields an additional dissipative term acting directly on the conserved variables, providing a clear explanation for the observed increase in stability \autoref{s:4}. In addition, for flux functions which are homogeneous of degree one, an upper bound was found for the parameter $c$ when forcing the weighted upwind distribution function set to be monotone non-decreasing. This bound, which guarantees \eqref{eq: discrete-kinetic-equation} admits an H-inequality if \eqref{eq: hyperbolic-system} has an convex entropy function \cite{Wissocq_2024_Positive_Preserving_VKLB}, was used in all of our numerical results, showing promising numerical stability across the benchmark problems. 

Numerical experiments on challenging benchmark problems in shallow water, compressible hydrodynamics, and ideal magnetohydrodynamics \autoref{s:5} demonstrated that the weighted upwind distribution function set improves stability relative to discontinuous upwind distribution functions \eqref{eq: upwinding-f1}--\eqref{eq: upwinding-f3} while improving accuracy compared to both the centered flux \eqref{eq: centered-f1}--\eqref{eq: centered-f3} and discontinuous upwind distribution function \eqref{eq: upwinding-f1}--\eqref{eq: upwinding-f3} sets. 

The clearest stability gains occurred in both strong shock and smooth problems where one or more characteristic speeds changed signs. In these cases, the discontinuous upwind distribution function set \eqref{eq: upwinding-f1}--\eqref{eq: upwinding-f3} developed numerical instabilities associated with the transition in characteristic direction. The weighted upwind distribution function set regularized this behavior and suppressed the resulting spurious oscillations. 
 
The ideal MHD Brio--Wu problem illustrated the robustness of the weighted upwind set for more complex wave interactions. The discontinuous upwind distribution function set required a CFL restriction approximately half that of the centered flux and weighted upwind sets; attempts to use the larger CFL led to negative pressure and loss of stability. By contrast, the weighted upwind distribution function set remained stable under the same CFL restriction as the centered flux set, indicating that the added state variable in \eqref{eq: adjusted-f1}--\eqref{eq: adjusted-f3} has does not compromise time-step limits or accuracy.

The weighted upwind distribution function set also provided improved resolution of important flow features. In the shock dominated one-dimensional tests, it more accurately captured shocks, rarefactions, and contact discontinuities than the other two sets. The smooth Euler problems, \eqref{eq: sinusoidal_solution_euler} and \eqref{eq: euler_smooth_vortex}, show that this improved robustness is not obtained at the expense of formal accuracy: the weighted upwind scheme retained second-order convergence while remaining more accurate than the centered flux distribution function set. In the two-dimensional ideal MHD Orszag--Tang problem, the weighted upwind distribution function set converged under mesh refinement toward the highly resolved Athena reference solution \cite{Athena_CPP}. These results demonstrated the the parameter \(c\) adds numerical diffusion into the system, where varying the bounding parameter \(c\) from \(0.05\) to \(0\) improved accuracy for this problem, consistent with reduced artificial dissipation. However, in other test cases the choice \(c = 0\) produced oscillations near shocks for \(\nu = 0.90\). This suggests that while \(c=0.05\) provided a useful balance of stability and accuracy across the full test suite, it may be overly diffusive for some multidimensional MHD problems. 

Overall, the proposed weighted upwind VKLB scheme provides a framework for flux splitting given any hyperbolic system, creating a pathway for constructing stable and accurate upwind VKLB methods for a broad class of hyperbolic systems.
For future work, we will consider analysis approaches that might allow the generalization of the upper bound on $c$ to systems that do not have a flux of homogeneous order one. Having improved the underlying accuracy of WU-VKLB method over the centered flux VKLB, and both the accuracy and stability of the method over the discontinuous upwind VKLB method, we will also seek the development of a high-resolution method that blends the first and second order WU-VKLB methods. While carrying out our development and considering future high-resolution approaches we became aware of the very nice work in \cite{Wissocq_2024_Positive_Preserving_VKLB} that developed one such approach that in addition enforces positivity- and bounds-preserving properties. This and other approaches will be considered that would further increase the accuracy of the proposed WU-VKLB method. 
\\

\noindent \textbf{CRediT authorship contribution statement}\\

\textbf{Michael W. Brown}: Writing – original draft, Writing – review \& editing, Visualization, Validation, Formal analysis, Conceptualization, Software. \textbf{Jehanzeb Chaudhry}: Writing – review \& editing, Validation, Formal analysis, Conceptualization, Funding acquisition. \textbf{John N. Shadid}: Writing – review \& editing, Validation, Formal analysis, Conceptualization, Funding acquisition.\\

\noindent \textbf{Declaration of competing interest}\\

The authors declare that they have no known competing financial interests or personal relationships that could have appeared to influence the work reported in this paper.\\

\noindent \textbf{Data availability}\\

Data will be made available on request.\\

\noindent \textbf{Acknowledgments}\\

This work was supported  by the U.S. Department of Energy, Office of Science (SC), Office of Advanced
Scientific Computing Research’s Applied Mathematics Competitive Portfolios program. Sandia National Laboratories is a multi-mission laboratory managed and operated by National Technology \& Engineering Solutions of Sandia, LLC (NTESS), a wholly owned subsidiary of Honeywell International Inc., for the U.S. Department of Energy’s National Nuclear Security Administration (DOE/NNSA) under contract DE-NA0003525. This written work is authored by an employee of NTESS. The employee, not NTESS, owns the right, title and interest in and to the written work and is responsible for its contents. Any subjective views or opinions that might be expressed in the written work do not necessarily represent the views of the U.S. Government. The publisher acknowledges that the U.S. Government retains a non-exclusive, paid-up, irrevocable, world-wide license to publish or reproduce the published form of this written work or allow others to do so, for U.S. Government purposes. The DOE will provide public access to results of federally sponsored research in accordance with the DOE Public Access Plan.

% To print the credit authorship contribution details
\printcredits

%% Loading bibliography style file
%\bibliographystyle{model1-num-names}
\bibliographystyle{cas-model2-names}

% Loading bibliography database
\bibliography{weighted_upwind_ref}

% Biography
%\bio{}
% Here goes the biography details.
%\endbio

%\bio{pic1}
% Here goes the biography details.
%\endbio

\end{document}